\title{Canonical matrices for
linear matrix
problems\footnotetext{This is the author's version of a work that was published in Linear Algebra Appl. 317 (2000) 53--102.}}
\author{Vladimir V. Sergeichuk\\
Institute of Mathematics \\ Tereshchenkivska 3,
Kiev, Ukraine\\sergeich@imath.kiev.ua}
\date{}
\begin{document}

\maketitle

\begin{abstract}
We consider a large class of matrix problems,
which includes the problem of classifying
arbitrary systems of linear mappings. For every
matrix problem from this class, we construct
Belitski\u\i's algorithm for reducing a matrix to
a canonical form, which is the generalization of
the Jordan normal form, and study the set
$C_{mn}$ of indecomposable canonical $m\times n$
matrices. Considering $C_{mn}$ as a subset in the
affine space of $m$-by-$n$ matrices, we prove
that either $C_{mn}$ consists of a finite number
of points and straight lines for every $m\times
n$, or $C_{mn}$ contains a 2-dimensional plane
for a certain $m\times n$.

{\it AMS classification:} 15A21; 16G60.

{\it Keywords:} Canonical forms; Canonical
matrices; Reduction; Classification; Tame and
wild matrix problems.
 \end{abstract}

\def\newpic#1{%
 \def\emline##1##2##3##4##5##6{%
 \put(##1,##2){\special{em:point #1##3}}%
  \put(##4,##5){\special{em:point #1##6}}%
 \special{em:line #1##3,#1##6}}}
\newpic{}

\newcommand{\matr}[4]%
{\left[\genfrac{}{}{0pt}{}{#1}{#3}\,
\genfrac{}{}{0pt}{}{#2}{#4}\right]}

\newcommand{\im}{\mathop{\rm Im}\nolimits}
\newcommand{\Ker}{\mathop{\rm Ker}\nolimits}
\newcommand{\rad}{\mathop{\rm rad}\nolimits}
\newcommand{\Hom}{\mathop{\rm Hom}\nolimits}
\newcommand{\diag}{\mathop{\rm diag}\nolimits}
\newcommand{\rank}{\mathop{\rm rank}\nolimits}

\renewcommand{\le}{\leqslant}
\renewcommand{\ge}{\geqslant}

\newtheorem{theorem}{Theorem}[section]
\newtheorem{lemma}{Lemma}[section]
\newtheorem*{corollary}{Corollary}

\theoremstyle{remark}
\newtheorem{remark}{Remark}[section]
\newtheorem{example}{Example}[section]
\newtheorem{step}{Step}

\theoremstyle{definition}
\newtheorem{definition}{Definition}[section]

All matrices are considered over an algebraically
closed field $k$; $k^{m\times n}$ denotes the set
of $m$-by-$n$ matrices over $k$. The article
consists of three sections.

In Section \ref{sss1} we present Belitski\u{\i}'s
algorithm \cite{bel} (see also \cite{bel1}) in a
form, which is convenient for linear algebra. In
particular, the algorithm permits to reduce pairs
of $n$-by-$n$ matrices to a canonical form by
transformations of simultaneous similarity:
$(A,B)\mapsto (S^{-1}AS, S^{-1}BS)$; another
solution of this classical problem was given by
Friedland \cite{fri}. This section uses
rudimentary linear algebra (except for the proof
of Theorem \ref{r1.0}) and may be interested for
the general reader.

In Section \ref{sss2} we determine a broad class
of matrix problems, which includes the problems
of classifying representations of quivers,
partially ordered sets and finite dimensional
algebras. In Section \ref{sss3} we get the
following geometric characterization of the set
of canonical matrices in the spirit of
\cite{gab_vos}: if a matrix problem does not
`contain' the canonical form problem for pairs of
matrices under simultaneous similarity, then its
set of indecomposable canonical $m \times n$
matrices in the affine space $k^{m \times n}$
consists of a finite number of points and
straight lines (contrary to \cite{gab_vos}, these
lines are unpunched).

A detailed introduction is given at the beginning
of every section. Each introduction may be read
independently.


\section{Belitski\u{\i}'s algorithm}   \label{sss1}

\subsection{Introduction}     \label{sec1.1}

Every matrix problem is given by a set of
admissible transformations that determines an
equivalence relation on a certain set of matrices
(or sequences of matrices). The question is to
find a {\it canonical form}---i.e., determine a
`nice' set of canonical matrices such that each
equivalence class contains exactly one canonical
matrix. Two matrices are then equivalent if and
only if they have the same canonical form.

Many matrix problems can be formulated in terms
of quivers and their representations, introduced
by Gabriel \cite{gab} (see also \cite{gab_roi}).
A {\it quiver} is a directed graph, its {\it
representation} $A$ is given by assigning to each
vertex $i$ a finite dimensional vector space
$A_i$ over $k$ and to each arrow $\alpha : i \to
j$  a~linear mapping $A_{\alpha}: A_i \to A_j$.
For example, the diagonalization theorem, the
Jordan normal form, and the matrix pencil theorem
give the solution of the canonical form problem
for representations of the quivers, respectively,
$$
\special{em:linewidth 0.4pt} \unitlength 1.00mm
\linethickness{0.4pt}
\begin{picture}(95.00,15.50)
\put(0.00,10.00){\makebox(0,0)[cc]{$\bullet$}}
\put(20.00,10.00){\makebox(0,0)[cc]{$\bullet$}}
\put(18.00,10.00){\vector(1,0){0.2}}
\emline{2.00}{10.00}{1}{18.00}{10.00}{2}
\put(40.00,10.00){\makebox(0,0)[cc]{$\bullet$}}
\put(75.00,10.00){\makebox(0,0)[cc]{$\bullet$}}
\put(95.00,10.00){\makebox(0,0)[cc]{$\bullet$}}
\put(93.00,11.00){\vector(2,-1){0.2}}
\emline{77.00}{11.00}{3}{79.22}{11.96}{4}
\emline{79.22}{11.96}{5}{81.44}{12.60}{6}
\emline{81.44}{12.60}{7}{83.67}{12.94}{8}
\emline{83.67}{12.94}{9}{85.89}{12.98}{10}
\emline{85.89}{12.98}{11}{88.11}{12.70}{12}
\emline{88.11}{12.70}{13}{90.33}{12.11}{14}
\emline{90.33}{12.11}{15}{93.00}{11.00}{16}
\put(93.00,9.00){\vector(2,1){0.2}}
\emline{77.00}{9.00}{17}{79.22}{8.04}{18}
\emline{79.22}{8.04}{19}{81.44}{7.40}{20}
\emline{81.44}{7.40}{21}{83.67}{7.06}{22}
\emline{83.67}{7.06}{23}{85.89}{7.02}{24}
\emline{85.89}{7.02}{25}{88.11}{7.30}{26}
\emline{88.11}{7.30}{27}{90.33}{7.89}{28}
\emline{90.33}{7.89}{29}{93.00}{9.00}{30}
\emline{42.00}{11.00}{31}{45.10}{12.06}{32}
\emline{45.10}{12.06}{33}{47.74}{12.73}{34}
\emline{47.74}{12.73}{35}{49.92}{13.01}{36}
\emline{49.92}{13.01}{37}{51.63}{12.91}{38}
\emline{51.63}{12.91}{39}{52.88}{12.43}{40}
\emline{52.88}{12.43}{41}{53.67}{11.56}{42}
\emline{53.67}{11.56}{43}{54.00}{10.00}{44}
\put(42.00,9.00){\vector(-3,1){0.2}}
\emline{54.00}{10.00}{45}{53.77}{8.67}{46}
\emline{53.77}{8.67}{47}{53.07}{7.72}{48}
\emline{53.07}{7.72}{49}{51.92}{7.15}{50}
\emline{51.92}{7.15}{51}{50.30}{6.98}{52}
\emline{50.30}{6.98}{53}{48.21}{7.18}{54}
\emline{48.21}{7.18}{55}{45.67}{7.78}{56}
\emline{45.67}{7.78}{57}{42.00}{9.00}{58}
\end{picture}
$$\\[-11mm]
(Analogously, one may study systems of forms
and linear mappings as representations of a
partially directed graph $G$, assigning a
bilinear form to an undirected edge. As was
proved in \cite{roi1,ser2}, the problem of
classifying representations of $G$ is reduced to
the problem of classifying representations of a
certain quiver $\bar G$. The class of studied
matrix problems may be extended by considering
quivers with relations \cite{gab_roi,rin} and
partially directed graphs with relations
\cite{ser2}.)

The canonical form problem was solved only for
the quivers of so called tame type by Donovan and
Freislich \cite{don1} and Nazarova \cite{naz},
this problem is considered as hopeless for the
other quivers (see Section \ref{sss2}).
Nevertheless, the matrices of each individual
representation of a quiver may be reduced to a
canonical form by Belitski\u{\i}'s algorithm (see
\cite{bel} and its extended version \cite{bel1}).
This algorithm and the better known Littlewood
algorithm \cite{lit} (see also \cite{ser,sha})
for reducing matrices to canonical form under
unitary similarity have the same conceptual
sketch: The matrix is partitioned and successive
admissible transformations are applied to reduce
the submatrices to some nice form. At each stage,
one refines the partition and restricts the set
of permissible transformations to those that
preserve the already reduced blocks. The process
ends in a finite number of steps, producing the
canonical form.

We will apply Belitski\u{\i}'s algorithm to the
canonical form problem for matrices under
$\Lambda$-similarity, which is defined as
follows. Let $\Lambda$ be an algebra of $n \times
n$ matrices (i.e., a subspace of $k^{n \times n}$
that is closed with respect to multiplication and
contains the identity matrix $I$) and let
$\Lambda ^*$ be the set of its nonsingular
matrices. We say that two ${n \times n}$ matrices
$M$ and $N$ are $\Lambda$-{\it similar} and write
$M \sim_{\Lambda} N$ if there exists $S \in
\Lambda ^*$ such that $S^{-1}MS = N$
($\sim_{\Lambda}$ is an equivalence relation; see
the end of Section \ref{sec2}).

\begin{example}                         \label{e0.1}
The problem of classifying representations of
each quiver can be formulated in terms of
$\Lambda$-similarity, where $\Lambda$ is an
algebra of block-diagonal matrices in which some
of the diagonal blocks are required to be equal.
For instance, the problem of classifying
representations of the quiver
\begin{equation}       \label{0.01}
\special{em:linewidth 0.4pt} \unitlength 0.40mm
\linethickness{0.4pt}
\begin{picture}(139.00,22.00)
(0,17.3)
\emline{116.00}{11.00}{1}{119.51}{11.83}{2}
\emline{119.51}{11.83}{3}{122.64}{12.43}{4}
\emline{122.64}{12.43}{5}{125.39}{12.80}{6}
\emline{125.39}{12.80}{7}{127.76}{12.94}{8}
\emline{127.76}{12.94}{9}{129.75}{12.85}{10}
\emline{129.75}{12.85}{11}{131.37}{12.54}{12}
\emline{131.37}{12.54}{13}{132.61}{12.00}{14}
\emline{132.61}{12.00}{15}{133.47}{11.23}{16}
\emline{133.47}{11.23}{17}{134.00}{10.00}{18}
\put(116.00,9.00){\vector(-4,1){0.2}}
\emline{134.00}{10.00}{19}{133.59}{8.89}{20}
\emline{133.59}{8.89}{21}{132.81}{8.02}{22}
\emline{132.81}{8.02}{23}{131.65}{7.40}{24}
\emline{131.65}{7.40}{25}{130.11}{7.02}{26}
\emline{130.11}{7.02}{27}{128.19}{6.89}{28}
\emline{128.19}{6.89}{29}{125.89}{7.00}{30}
\emline{125.89}{7.00}{31}{123.22}{7.36}{32}
\emline{123.22}{7.36}{33}{120.16}{7.96}{34}
\emline{120.16}{7.96}{35}{116.00}{9.00}{36}
\emline{24.00}{11.00}{37}{20.49}{11.83}{38}
\emline{20.49}{11.83}{39}{17.36}{12.43}{40}
\emline{17.36}{12.43}{41}{14.61}{12.80}{42}
\emline{14.61}{12.80}{43}{12.24}{12.94}{44}
\emline{12.24}{12.94}{45}{10.25}{12.85}{46}
\emline{10.25}{12.85}{47}{8.63}{12.54}{48}
\emline{8.63}{12.54}{49}{7.39}{12.00}{50}
\emline{7.39}{12.00}{51}{6.53}{11.23}{52}
\emline{6.53}{11.23}{53}{6.00}{10.00}{54}
\put(24.00,9.00){\vector(4,1){0.2}}
\emline{6.00}{10.00}{55}{6.41}{8.89}{56}
\emline{6.41}{8.89}{57}{7.19}{8.02}{58}
\emline{7.19}{8.02}{59}{8.35}{7.40}{60}
\emline{8.35}{7.40}{61}{9.89}{7.02}{62}
\emline{9.89}{7.02}{63}{11.81}{6.89}{64}
\emline{11.81}{6.89}{65}{14.11}{7.00}{66}
\emline{14.11}{7.00}{67}{16.78}{7.36}{68}
\emline{16.78}{7.36}{69}{19.84}{7.96}{70}
\emline{19.84}{7.96}{71}{24.00}{9.00}{72}
\put(28.00,10.00){\makebox(0,0)[cc]{1}}
\put(112.00,10.00){\makebox(0,0)[cc]{3}}
\put(108.00,7.00){\vector(1,0){0.2}}
\emline{32.00}{7.00}{79}{108.00}{7.00}{80}
\put(108.00,10.00){\vector(1,0){0.2}}
\emline{32.00}{10.00}{73}{108.00}{10.00}{74}
\put(70.00,34.00){\makebox(0,0)[cc]{2}}
\put(1.00,10.00){\makebox(0,0)[cc]{$\alpha$}}
\put(139.00,10.00){\makebox(0,0)[cc]{$ \zeta$}}
\put(70.00,15.00){\makebox(0,0)[cc]{$\gamma$}}
\put(48.00,25.00){\makebox(0,0)[cc]{$\beta$}}
\put(91.00,25.00){\makebox(0,0)[cc]{$\varepsilon
$}}
\put(66.00,29.00){\vector(2,1){0.2}}
\emline{32.00}{12.00}{75}{66.00}{29.00}{76}
\put(108.00,12.00){\vector(2,-1){0.2}}
\emline{74.00}{29.00}{77}{108.00}{12.00}{78}
\put(70.00,3.00){\makebox(0,0)[cc]{$ \delta$}}
\end{picture}
\end{equation}\\
is the canonical form problem for matrices of the
form $$
\begin{bmatrix}
A_{\alpha}&0&0&0\\ A_{\beta}&0&0&0  \\
A_{\gamma}&0&0&0  \\ A_{\delta}& A_{\varepsilon}&
A_{\zeta}&0
\end{bmatrix}$$
under $\Lambda$-similarity, where
$\Lambda$ consists of
block-diagonal matrices of the form $S_1 \oplus
S_2 \oplus S_3\oplus S_3$.
\end{example}

\begin{example}                         \label{e0.1a}
By the definition of Gabriel and Roiter
\cite{gab_roi}, a linear matrix problem of size
$m\times n$ is given by a pair $(D^*,\cal{M})$,
where $D$ is a subalgebra of $k^{m\times m}\times
k^{n\times n}$ and $\cal M$ is a subset of
$k^{m\times n}$ such that $SAR^{-1}\in\cal M$
whenever $A\in\cal M$ and $(S,R)\in { D}^*$. The
question is to classify the orbits of $\cal M$
under the action $(S,R):A\mapsto SAR^{-1}$.
Clearly, two $m\times n$ matrices $A$ and $B$
belong to the same orbit if and only if
$\matr{0}{A}{0}{0}$ and $\matr{0}{B}{0}{0}$ are
$\Lambda$-similar, where $\Lambda:=\{S\oplus
R\,|\, (S,R)\in{ D}\}$ is an algebra of
$(m+n)\times (m+n)$ matrices.
\end{example}

In Section \ref{sec2} we prove that for every
algebra $\Lambda \subset k^{n \times n}$ there
exists a nonsingular matrix $P$ such that the
algebra $P^{-1}\Lambda
P:=\{P^{-1}AP\,|\,A\in\Lambda\}$ consists of
upper block-triangular matrices, in which some of
the diagonal blocks must be equal and
off-diagonal blocks satisfy a system of linear
equations. The algebra $P^{-1}\Lambda P$ will be
called a {\it reduced matrix algebra.} The
$\Lambda$-similarity transformations with a
matrix $M$ correspond to the $P^{-1} \Lambda
P$-similarity transformations with the matrix
$P^{-1}MP$ and hence it suffices to study
$\Lambda$-similarity transformations given by a
reduced matrix algebra $\Lambda$.

In Section \ref{ss2}, for every Jordan matrix $J$
we construct a matrix $J^{\#}=P^{-1}JP$ ($P$ is a
permutation matrix) such that all matrices
commuting with it form a reduced algebra.
Following Shapiro \cite{sha1}, we call $J^{\#}$ a
{\it Weyr matrix} since its form is determined by
the set of its Weyr characteristics
(Belitski\u{\i} \cite{bel} calls $J^{\#}$ a
modified Jordan matrix; it plays a central role
in his algorithm).

In Section \ref{sec3} we construct an algorithm
(which is a modification of Belitski\u\i's
algorithm \cite{bel}, \cite{bel1}) for reducing
matrices to canonical form under
$\Lambda$-similarity with a reduced matrix
algebra $\Lambda$. In Section \ref{sec4} we study
the construction of the set of canonical
matrices.


\subsection{Reduced matrix algebras}   \label{sec2}

In this section we prove that for every matrix
algebra $\Lambda\subset k^{n \times n}$ there
exists a nonsingular matrix $P$ such that the
algebra $P^{-1}\Lambda P$ is a reduced matrix
algebra in the sense of the following definition.

A block matrix $M=[M_{ij}]$, $M_{ij}\in
k^{m_i\times n_j}$, will be called an
$\underline{m}\times\underline{n}$ {\it matrix},
where $\underline{m}=(m_1,m_2,\ldots),$
$\underline{n}=(n_1,n_2,\ldots)$ and $m_i,n_j\in
\{0,1,2,\ldots\}$ (we take into consideration
blocks without rows or columns).

\begin{definition}    \label{d1.1}
An algebra $\Lambda$ of $\underline{n}
\times\underline{n}$ matrices,
$\underline{n}=(n_1,\dots,n_t)$, will be called a
{\it reduced} $\underline{n}\times\underline{n}$
{\it algebra} if there exist
\begin{itemize}
\item [(a)] an equivalence relation
\begin{equation}  \label{0}
\sim \ \ \text{in}\ \ T=\{1,\dots,t\},
\end{equation}

\item[(b)] a family of systems of linear equations
\begin{equation}       \label{00}
\Bigl\{\sum_{{\cal I} \ni i < j \in {\cal J}}
c_{ij}^{(l)}x_{ij} = 0, \quad 1 \le l \le
q_{_{\cal IJ}} \Bigr\}_{{\cal I,J}\in
T/\!\sim}\,,
\end{equation}
indexed by pairs of equivalence classes, where
$c_{ij}^{(l)} \in k$ and $q_{_{\cal IJ}} \geq 0$,
\end{itemize}
such that $\Lambda$ consists of all upper
block-triangular
$\underline{n}\times\underline{n}$ matrices
\begin{equation}         \label{1}
   S=\begin{bmatrix}
S_{11}&S_{12}&\cdots& S_{1t}\\
&S_{22}&\ddots&\vdots\\ &&\ddots&S_{t-1,t}\\
 \mbox{\LARGE 0} &&& S_{tt}  \end{bmatrix},\quad
   S_{ij}\in k^{n_i\times n_j},
\end{equation}
in which diagonal blocks satisfy the condition
\begin{equation}         \label{2}
S_{ii} = S_{jj} \quad {\rm whenever}\quad i \sim
j\,,
\end{equation}
 and off-diagonal blocks satisfy the equalities
 \begin{equation}         \label{3}
\sum_{{\cal I} \ni i < j \in {\cal J}}
c_{ij}^{(l)} S_{ij} = 0, \quad  1 \le l \le
q_{_{\cal IJ}}\,,
\end{equation}
for each pair ${\cal I,J}\in T/\!\sim$\,.
\end{definition}

Clearly, the sequence
$\underline{n}=(n_1,\dots,n_t)$ and the
equivalence relation $\sim$ are uniquely
determined by $\Lambda$; moreover, $n_i= n_j$ if
$i\sim j$.

\begin{example}                   \label{e1.2}
Let us consider the classical canonical form
problem for pairs of matrices $(A,B)$ under
simultaneous  similarity (i.e., for
representations of the quiver \!\!
\unitlength 0.5mm \linethickness{0.4pt}
\begin{picture}(15,0)(-7,-1.5)
\put(0.00,0.00){\circle*{0.8}}
\put(-5.53,0.00){\oval(4.00,4.00)[l]}
\bezier{16}(-5.47,1.93)(-3.67,2.00)(-1.27,0.93)
\put(-1.20,-0.67){\vector(2,1){0.2}}
\bezier{20}(-5.47,-2.00)(-3.20,-1.87)(-1.20,-0.67)
\put(5.53,0.00){\oval(4.00,4.00)[r]}
\bezier{16}(5.47,1.93)(3.67,2.00)(1.27,0.93)
\put(1.20,-0.67){\vector(-2,1){0.2}}
\bezier{20}(5.47,-2.00)(3.20,-1.87)(1.20,-0.67)
\end{picture}
\!). Reducing $(A,B)$ to the form $(J,C)$, where
$J$ is a Jordan matrix, and restricting the set
of permissible transformations to those that
preserve $J$, we obtain the canonical form
problem for $C$ under $\Lambda$-similarity, where
$\Lambda$ consists of all matrices commuting with
$J$. In the next section, we modify $J$ such that
$\Lambda$ becomes a reduced matrix algebra.
\end{example}

\begin{theorem}                \label{r1.0}
For every matrix algebra $\Lambda\subset k^{n
\times n}$, there exists a nonsingular matrix $P$
such that $P^{-1}\Lambda P$ is a reduced matrix
algebra.
\end{theorem}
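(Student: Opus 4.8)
The plan is to realise $P$ as the change-of-basis matrix for a well-chosen basis of $k^{n}$, obtained by combining a composition series of the $\Lambda$-module $k^{n}$ with the Wedderburn--Malcev decomposition of $\Lambda$. Since $\Lambda$ is a finite-dimensional associative $k$-algebra with identity and $k$ is algebraically closed (hence perfect), the Wedderburn principal theorem gives a vector-space decomposition $\Lambda=R\oplus\rad\Lambda$ in which $R$ is a semisimple subalgebra; the structure theorem for semisimple algebras over an algebraically closed field then yields $R\cong M_{d_1}(k)\times\cdots\times M_{d_r}(k)$, the factors corresponding bijectively to the isomorphism classes of simple $\Lambda$-modules, the $i$-th such module having dimension $d_i$.

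First I would fix a maximal chain of submodules $0=V_0\subset V_1\subset\cdots\subset V_t=k^{n}$ of $k^{n}$ as a left $\Lambda$-module, so that each $V_l/V_{l-1}$ is a simple $\Lambda$-module. As the $V_l$ are in particular $R$-submodules and the $R$-module $k^{n}$ is semisimple, the filtration splits $R$-equivariantly: $k^{n}=W_1\oplus\cdots\oplus W_t$ with each $W_l$ an $R$-submodule isomorphic to $V_l/V_{l-1}$, hence simple. Put $i\sim j$ iff $W_i\cong W_j$ as $\Lambda$-modules, and choose a basis of each $W_l$ by transporting, via a fixed $R$-isomorphism, a once-and-for-all chosen basis of a distinguished representative of the $\sim$-class of $l$. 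Let $P$ be the matrix of the resulting basis of $k^{n}$. Since the filtration is $\Lambda$-stable, $P^{-1}\Lambda P$ consists of block upper-triangular matrices with block sizes $\underline n=(n_1,\dots,n_t)$, $n_l=\dim W_l$, and $n_i=n_j$ whenever $i\sim j$.

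It remains to verify conditions (2) and (3). For (2): $\rad\Lambda$ annihilates every simple module, so writing $\lambda=\rho+\nu$ with $\rho\in R$ and $\nu\in\rad\Lambda$, the block $\nu$ is strictly block upper-triangular and the diagonal block of $\lambda$ on $W_l$ is the matrix of $\rho|_{W_l}$, which by the compatible choice of bases is literally the same matrix for all $l$ in one $\sim$-class; conversely these block-diagonal matrices constant on $\sim$-classes are exactly the image of $R$. For (3): $\rad\Lambda$ is then an $R$-sub-bimodule of $\mathfrak n:=\bigoplus_{i<j}\Hom_k(W_j,W_i)$; the algebra $R\otimes_k R^{\mathrm{op}}$ is semisimple, so $\mathfrak n$ is a semisimple bimodule, each summand $\Hom_k(W_j,W_i)$ is a simple bimodule, and two summands are isomorphic precisely when the $\sim$-classes of the two indices agree. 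Hence the isotypic components of $\mathfrak n$ are indexed by ordered pairs $({\cal I},{\cal J})$ of $\sim$-classes, $\rad\Lambda$ is the direct sum of its intersections with them, and---identifying all summands inside one isotypic component with the matrix space $k^{n_{\cal I}\times n_{\cal J}}$ by means of the compatible bases and invoking Schur's lemma over $k$---each such intersection is the solution set of linear equations $\sum_{{\cal I}\ni i<j\in{\cal J}}c^{(l)}_{ij}S_{ij}=0$ with scalar coefficients in the blocks $S_{ij}$, which is exactly (3). Finally $\Lambda=R\oplus\rad\Lambda$ coincides with the set of block upper-triangular matrices whose diagonal part satisfies (2) and whose strictly-upper part lies in $\rad\Lambda$, i.e.\ satisfies (3); hence $P^{-1}\Lambda P$ is a reduced $\underline n\times\underline n$ algebra.

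The main obstacle is this last verification: showing that the linear constraints cutting $\rad\Lambda$ out of the space of strictly block upper-triangular matrices really do decouple into the rectangular blocks ${\cal I}\times{\cal J}$ demanded by (00)--(3), instead of mixing blocks attached to different class-pairs. This is precisely what the semisimplicity of $R\otimes_k R^{\mathrm{op}}$ and the simplicity of each $\Hom_k(W_j,W_i)$ deliver, but some care is needed to match the abstract bimodule isomorphisms with the concrete "compare the matrix blocks" identifications; that is the reason the basis of each $W_l$ must be pulled back from a single representative of its $\sim$-class along an $R$-isomorphism.
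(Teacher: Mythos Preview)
Your proof is correct and complete; the worry you flag at the end is adequately handled by the bimodule argument you give (the compatible choice of bases makes the fixed $R$-isomorphisms act as the identity on matrix entries, so sub-bimodules of an isotypic component of $\mathfrak n$ are exactly of the form $U\otimes k^{n_{\cal I}\times n_{\cal J}}$ for a linear subspace $U\subset k^N$, which is precisely the shape of \eqref{3}).

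The paper proceeds differently. It first passes to the ``basic'' situation: with $e=\sum_\alpha e_{11}^{(\alpha)}$ the sum of one primitive idempotent per simple type, it restricts to $V_0=eV$ and $\Lambda_0=e\Lambda e$, whose semisimple quotient is $k\times\cdots\times k$. On $V_0$ it builds a basis adapted to the radical filtration $R_0^{m-1}V_0\subset\cdots\subset V_0$ and consisting of eigenvectors $v_i\in e_{\alpha_i}V_0$; this already yields a reduced form for $\Lambda_0$ (with all blocks $1\times1$). It then inflates the basis to $V$ via the matrix units $e_{i1}^{(\alpha)}$, which replaces each $1\times1$ entry by a block of the correct size. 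Your route instead takes a composition series of $V$ itself, splits it $R$-equivariantly, and uses the semisimplicity of $R\otimes_k R^{\mathrm{op}}$ to organise the strictly upper-triangular part. Both approaches rest on Wedderburn--Malcev and Wedderburn--Artin, but yours is a one-stage argument that treats all block sizes uniformly via bimodule theory, while the paper's two-stage ``reduce to basic, then inflate'' method is more computational and keeps the basic algebra visible---a structure the paper exploits again later (e.g.\ in Lemma~\ref{l3.2} and Section~\ref{s3.4}).
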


\begin{proof}
Let $V$ be a vector space over $k$ and $\Lambda
\subset {\rm End}_k(V)$ be an algebra of linear
operators. We prove briefly that their matrices
in a certain basis of $V$ form a reduced algebra
(this fact is used only in Section \ref{s3.4};
the reader may omit the proof if he is not
familiar with the theory of algebras).

Let $R$ be the radical of $\Lambda$. By the
Wedderburn-Malcev theorem \cite{dr_ki}, there
exists a subalgebra $\bar{\Lambda}
\subset\Lambda$ such that $\bar{\Lambda}
\simeq\Lambda/R$ and $\bar{\Lambda}\cap R=0$. By
the Wedderburn-Artin theorem \cite{dr_ki},
$\bar{\Lambda}\simeq k^{m_1\times m_1}\times
\dots \times k^{m_q\times m_q}$. We denote by
$e_{ij}^{(\alpha)} \in \bar{\Lambda}$ $(i,j\in \{
1,\dots, m_{\alpha}\},\ 1\le\alpha\le q)$ the
elements of $\Lambda$ that correspond to the
matrix  units of $k^{m_{\alpha}\times
m_{\alpha}}$. Put $e_{\alpha}=e_{11}^{(\alpha)},$
$e=e_1+\dots+e_q,$ and $V_0=eV.$

We consider $\Lambda_0:=e\Lambda e$ as a
subalgebra of ${\rm End}_k(V_0)$, its radical is
$R_0:=R\cap\Lambda_0$ and $\Lambda_0/ R_0\simeq
k\times \dots\times k.$ Let $R_0^{m-1}\neq
0=R_0^m.$ We choose a basis of $R_0^{m-1}V_0$
formed by vectors $v_1,\dots,v_{t_1} \in
\bigcup_{\alpha}e_{\alpha}V_0$, complete it to a
basis of $R_0^{m-2}V_0$  by vectors
$v_{t_1+1},\dots,v_{t_2} \in
\bigcup_{\alpha}e_{\alpha}V_0$, and so on, until
we obtain a basis  $v_1,\dots,v_{t_m}$ of $V_0$.
All its vectors have the form
$v_i=e_{\alpha_i}v_i$; put ${\cal I}_{\alpha}=
\{i\,|\,\alpha_i= \alpha \}$ for $1\le \alpha\le
q$.

Since $e_{\alpha}e_{\beta}=0$ if
$\alpha\neq\beta$, $e_{\alpha}^2= e_{\alpha}$,
and $e$ is the unit of $\Lambda_0$, the vector
space of $\Lambda_0$ is the  direct sum of all
$e_{\alpha}\Lambda_0 e_{\beta}.$ Moreover,
$e_{\alpha}\Lambda_0 e_{\beta}=e_{\alpha}R_0
e_{\beta}$ for $\alpha\neq\beta$ and
$e_{\alpha}\Lambda_0 e_{\alpha}=
ke_{\alpha}\oplus e_{\alpha}R_0e_{\alpha},$ hence
$\Lambda_0= (\bigoplus_{\alpha}ke_{\alpha})
\oplus (\bigoplus_{\alpha,\beta}e_{\alpha}R_0e_{
\beta}).$ The matrix of every linear operator
from $ e_{\alpha}R_0e_{ \beta}$ in the basis
$v_1,\dots,v_{t_m}$ has the form
$[a_{ij}]_{i,j=1}^ {t_m}$, where $a_{ij}\ne 0$
implies $i<j$ and $(i,j)\in{\cal
I}_{\alpha}\times {\cal I}_{\beta}$. Therefore,
the set of matrices $[a_{ij}]$ of linear
operators from $\Lambda_0$ in the basis
$v_1,\dots,v_{t_m}$ may be given by a system of
linear equations of the form
 $$
a_{ij}=0 \ (i>j), \quad a_{ii}=a_{jj} \
(\{i,j\}\subset {\cal I}_{\alpha}), \quad
\sum_{{\cal I}_{\alpha} \ni i < j \in {\cal
I}_{\beta}} c_{ij}^{(l)} a_{ij} = 0 \  ( 1 \le l
\le q_{\alpha\beta}).
 $$

The matrices of linear operators from $\Lambda$
in the basis $e_{11}^{(\alpha_1)}v_1, \dots,
e_{m_{\alpha_1} 1}^{(\alpha_1)}v_1,
$\linebreak[0]$ \ e_{11}^{(\alpha_2)}v_2,\dots,$
$ e_{m_{\alpha_2} 1}^{(\alpha_2)}v_2,\dots $ of
$V$ have the form (\ref{1}) and are given by the
system of relations (\ref{2})--(\ref{3}). Hence
their set is a reduced matrix algebra.
\end{proof}

For every matrix algebra $\Lambda \subset
k^{n\times n}$, the set $\Lambda^*$ of its
nonsingular matrices is a group and hence the
$\Lambda$-similarity is an equivalence relation.
Indeed, we may assume that $\Lambda$ is a reduced
matrix algebra. Then every $S\in\Lambda^*$ can be
written in the form $D(I-C)$, where
$D,\,C\in\Lambda$ such that $D$ is a
block-diagonal and all diagonal blocks of  $C$
are zero. Since $C$ is nilpotent,
$S^{-1}=(I+C+C^2+\cdots)D^{-1}\in\Lambda^*$.

Note also that every finite dimensional algebra
is isomorphic to a matrix algebra and hence, by
Theorem \ref{r1.0}, it is isomorphic to a reduced
matrix algebra.

\subsection{Weyr matrices}     \label{ss2}

Following Belitski\u{\i} \cite{bel}, for every
Jordan matrix $J$ we define a matrix
$J^{\#}=P^{-1}JP$ ($P$ is a permutation matrix)
such that all matrices commuting with it form a
reduced algebra. We will fix a linear order
$\prec$ in $k$ (if $k$ is the field of complex
numbers, we may use the lexicographic ordering:
$a+bi\prec c+di$ if either $a=c$ and $b<d$, or
$a<c$).

\begin{definition}    \label{d2'.1}
A {\it Weyr matrix} is a matrix of the form
\begin{equation}       \label{2'.1}
W=W_{\{\lambda_1\}}\oplus\dots\oplus
W_{\{\lambda_r\}},\quad \lambda_1\prec\dots\prec
\lambda_r,
\end{equation}
where
\begin{equation*}       \label{2'.5'}
W_{\{\lambda_i\}}= \left[\begin{tabular}{cccc}
$\lambda_iI_{m_{i1}}$&$W_{i1}$&&{\Large 0}\\
&$\lambda_iI_{m_{i2}}$&$\ddots$&\\
&&$\ddots$&$W_{i,k_i-1}$\\ {\Large
0}&&&$\lambda_iI_{m_{ik_i}}$
\end{tabular}\right],\quad
W_{ij}=
\begin{bmatrix}
I\\0
\end{bmatrix},
\end{equation*}
$m_{i1}\ge\dots\ge m_{ik_i}$. The {\it standard
partition} of $W$ is the ${\underline n}\times
{\underline n}$ partition, where $\underline
n=({\underline n}_1,\dots, {\underline n}_r)$ and
${\underline n}_i$ is the sequence
$m_{i1}-m_{i2}, m_{i2}-m_{i3},\dots,
m_{i,k_i-1}-m_{ik_i}, m_{ik_i};$
$m_{i2}-m_{i3},\dots, m_{i,k_i-1}-m_{ik_i},
m_{ik_i};\dots; m_{i,k_i-1}-m_{ik_i}, m_{ik_i};
m_{ik_i}$ from which all zero components are
removed.
\end{definition}

The standard partition of $W$ is the most coarse
partition for which all diagonal blocks have the
form $\lambda_i I$ and all off-diagonal blocks
have the form $0$ or $I$.

The matrix $W$ is named a `Weyr matrix' since
$(m_{i1}, m_{i2},\dots,m_{ik_i})$ is the Weyr
characteristic of $W$ (and of every matrix that
is similar to $W$) for $\lambda_i$. Recall (see
\cite{sha}, \cite{sha1}, \cite{wey}) that the
{\it Weyr characteristic} of a square matrix $A$
for an eigenvalue $\lambda$ is the decreasing
list $(m_1,m_2,\ldots)$, where $m_i:=
{\rank}(A-\lambda I)^{i-1}- {\rank}(A-\lambda
I)^i$. Clearly, $m_i$ is the number of Jordan
cells $J_l(\lambda ),\ l\ge i$, in the Jordan
form of $A$ (i.e., $m_i-m_{i+1}$ is the number of
$J_i(\lambda )$), so the Jordan form is uniquely,
up to permutation of Jordan cells, determined by
the set of eigenvalues of $A$ and their Weyr
characteristics. Taking into account the
inequality at the right-hand side of
\eqref{2'.1}, we get the first statement of the
following theorem:

\begin{theorem}      \label{t2'.1}
Every square matrix $A$ is similar to exactly one
Weyr matrix $A^{\#}$. The matrix $A^{\#}$ is
obtained from the Jordan form of $A$ by
simultaneous permutations of its rows and
columns. All matrices commuting with $A^{\#}$
form a reduced matrix algebra $\Lambda(A^{\#})$
of ${\underline n}\times {\underline n}$ matrices
(\ref{1}) with equalities (\ref{3}) of the form
$S_{ij}=S_{i'j'}$ and $S_{ij}=0$, where
${\underline n}\times {\underline n}$ is the
standard partition of $A^{\#}$.
\end{theorem}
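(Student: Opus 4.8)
The plan is to build the Weyr matrix $A^{\#}$ explicitly as a simultaneous row-and-column permutation of the Jordan form $J$ of $A$, and then to compute the commutant of each Weyr block $W_{\{\lambda_i\}}$ directly. First I would recall that the Jordan form of $A$ is determined, up to order of cells, by the eigenvalues $\lambda_1\prec\dots\prec\lambda_r$ together with the Weyr characteristics $(m_{i1},m_{i2},\dots,m_{ik_i})$, as discussed just before the theorem; this gives existence and uniqueness of the datum defining $W$ in \eqref{2'.1}, once we fix the order $\prec$. It remains to check that the matrix $W$ so defined is similar to $A$. For a single eigenvalue $\lambda$ with Weyr characteristic $(m_1,\dots,m_k)$ this is the classical fact that $W_{\{\lambda\}}$ and $\bigoplus_l J_l(\lambda)^{\oplus(m_l-m_{l+1})}$ have the same rank sequence for the powers of $(\,\cdot\,-\lambda I)$: one reads off that $\operatorname{rank}(W_{\{\lambda\}}-\lambda I)^{p}=\sum_{j>p} m_j$, which matches the Jordan side. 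Summing over the distinct eigenvalues and using that similarity respects direct sums handles the whole matrix, and records that $A^{\#}=P^{-1}JP$ for a permutation matrix $P$ realizing the reordering of basis vectors implicit in passing from the Jordan layout to the Weyr layout.

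Next I would analyze the commutant. Since $A^{\#}$ is block-diagonal with blocks $W_{\{\lambda_i\}}$ having pairwise distinct eigenvalues, any matrix commuting with $A^{\#}$ is block-diagonal with respect to the coarse partition into the $r$ blocks; so it suffices to describe the commutant of a single $W_{\{\lambda\}}$. Write $N:=W_{\{\lambda\}}-\lambda I$, which is the nilpotent matrix built from the rectangular identity blocks $W_{ij}=\begin{bmatrix}I\\0\end{bmatrix}$; commuting with $W_{\{\lambda\}}$ is the same as commuting with $N$. I would impose the standard partition on $W_{\{\lambda\}}$ described in the definition — its characterizing property, which I would verify, is that it is the coarsest refinement of the $k\times k$ block structure for which every diagonal block of $N$ is $0$ and every superdiagonal block of $N$ is either $0$ or an identity matrix. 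Then a direct block computation of $SN=NS$ for an upper-block-triangular $S$ in this partition yields, block by block, exactly equations of the two asserted shapes: the action of $N$ on the left and on the right either kills a block, shifts it, or identifies it with a neighbour, so each scalar equation is of the form $S_{ij}=0$ or $S_{ij}=S_{i'j'}$. (That $S$ is forced to be upper block triangular in the standard partition follows because $N$ is "strictly upper" there and the chain condition coming from the decreasing $m_{i1}\ge\dots\ge m_{ik_i}$ leaves no room for nonzero blocks below the diagonal.)

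Finally I would assemble the pieces into the statement that $\Lambda(A^{\#})$ is a reduced $\underline n\times\underline n$ algebra in the sense of Definition~\ref{d1.1}: the equivalence relation $\sim$ on the index set $T$ is generated by the identifications $S_{ii}=S_{jj}$ forced above (these hold exactly between indices inside one $\lambda_i$-block that correspond to equal diagonal scalars $\lambda_i$, so distinct eigenvalues give distinct classes and $n_i=n_j$ when $i\sim j$), and the system \eqref{00} is precisely the finite collection of equations $S_{ij}=S_{i'j'}$ and $S_{ij}=0$ extracted in the previous step, which are visibly linear and homogeneous in the off-diagonal blocks. I expect the main obstacle to be bookkeeping rather than conceptual: carefully indexing the standard partition (the nested telescoping lists $m_{i1}-m_{i2},\dots$) and tracking which superdiagonal blocks of $N$ are $I$ versus $0$ after refinement, so that the block equations $SN=NS$ can be read off cleanly and one sees that no equation of a more general shape $\sum c^{(l)}_{ij}S_{ij}=0$ is needed. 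Everything else is linear algebra over the chosen order $\prec$.
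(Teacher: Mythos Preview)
Your proposal is correct and covers all three assertions of the theorem, but it diverges from the paper's argument in two places that are worth flagging.

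First, for the claim that $A^{\#}$ is obtained from the Jordan form by a simultaneous row-and-column permutation, the paper does not verify similarity by comparing rank sequences and then assert the existence of a permutation; instead it constructs the permutation explicitly. It first gathers equal-size Jordan cells into $J^{+}_{\{\lambda\}}=J_{p_1}(\lambda I_{n_1})\oplus\cdots\oplus J_{p_l}(\lambda I_{n_l})$, indexes the substrips by pairs $(\alpha,i)$, and then reorders them lexicographically; the resulting matrix is visibly the Weyr block $W_{\{\lambda\}}$. Your rank argument establishes similarity, but by itself it does not yield permutation similarity, so you would still owe the explicit reordering you allude to at the end of that paragraph.

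Second, for the commutant, the paper avoids the direct block computation of $SN=NS$ in the standard partition that you propose. Instead it quotes the classical description (Gantmacher) of all matrices commuting with $J^{+}_{\{\lambda\}}$---the familiar upper-Toeplitz pattern in each $(i,j)$ block---and then applies the same lexicographic permutation of substrips. The block-triangularity of the commutant and the fact that every constraint is of the form $S_{ij}=S_{i'j'}$ or $S_{ij}=0$ are then read off from that explicit form. What this buys is that the delicate point you identify as the main obstacle (showing no lower-triangular blocks survive, and that no genuinely mixed linear relation $\sum c^{(l)}_{ij}S_{ij}=0$ appears) is absorbed into the citation rather than argued from scratch. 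Your direct route is perfectly sound and more self-contained, but it does require carrying out exactly the bookkeeping you anticipate; the paper's detour through $J^{+}_{\{\lambda\}}$ trades that bookkeeping for an appeal to a standard reference.
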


To make the proof of the second and the third
statements clearer, we begin with an example.

\begin{example}          \label{e2'.1}
Let us construct the Weyr form
$J_{\{\lambda\}}^{\#}$ of the Jordan matrix
$$
J_{\{\lambda\}}:= \underbrace{J_4(\lambda)\oplus
\dots \oplus J_4(\lambda)}_{\mbox{$p$
times}}\oplus \underbrace{J_2(\lambda)\oplus
\dots \oplus J_2(\lambda)}_{\mbox{$q$ times}}
$$
with a single eigenvalue $\lambda$. Gathering
Jordan cells of the same size, we first reduce
$J_{\{\lambda\}}$ to $J_{\{\lambda\}}^{\text{\it
+}}= J_4(\lambda I_p) \oplus J_2(\lambda I_q)$.
The matrix $J_{\{\lambda\}}^{\text{\it +}}$ and
all matrices commuting with it have the form,
respectively, $$
\special{em:linewidth 0.4pt} \unitlength 0.48mm
\linethickness{0.4pt}
\begin{picture}(280.00,65.00)
{\linethickness{1.2pt}
\put(0.00,0.00){\framebox(120.00,60.00)[cc]{}} }
\emline{80.00}{60.00}{1}{80.00}{0.00}{2}
\emline{120.00}{20.00}{3}{0.00}{20.00}{4}
\put(10.00,55.00){\makebox(0,0)[cc]{$\lambda
I_p$}}
\put(30.00,45.00){\makebox(0,0)[cc]{$\lambda
I_p$}}
\put(50.00,35.00){\makebox(0,0)[cc]{$\lambda
I_p$}}
\put(70.00,25.00){\makebox(0,0)[cc]{$\lambda
I_p$}}
\put(90.00,15.00){\makebox(0,0)[cc]{$\lambda
I_q$}}
\put(110.00,5.00){\makebox(0,0)[cc]{$\lambda
I_q$}}
\put(100.00,40.00){\makebox(0,0)[cc]{\Large{0}}}
\put(40.00,10.00){\makebox(0,0)[cc] {\Large{0}}}
\put(10.00,65.00){\makebox(0,0)[cc]
{$\scriptstyle (11)$}}
\put(30.00,65.00){\makebox(0,0)[cc]
{$\scriptstyle (21)$}}
\put(50.00,65.00){\makebox(0,0)[cc]
{$\scriptstyle (31)$}}
\put(70.00,65.00){\makebox(0,0)[cc]
{$\scriptstyle (41)$}}
\put(90.00,65.00){\makebox(0,0)[cc]
{$\scriptstyle (12)$}}
\put(110.00,65.00){\makebox(0,0)[cc]
{$\scriptstyle (22)$}}
\put(127.00,55.00){\makebox(0,0)[cc]
{$\scriptstyle (11)$}}
\put(127.00,45.00){\makebox(0,0)[cc]
{$\scriptstyle (21)$}}
\put(127.00,35.00){\makebox(0,0)[cc]
{$\scriptstyle (31)$}}
\put(127.00,25.00){\makebox(0,0)[cc]
{$\scriptstyle (41)$}}
\put(127.00,15.00){\makebox(0,0)[cc]
{$\scriptstyle (12)$}}
\put(127.00,5.00){\makebox(0,0)[cc]
{$\scriptstyle (22)$}} {\linethickness{1.2pt}
\put(150.00,0.00){\framebox(120.00,60.00)[cc]{}}
} \emline{270.00}{20.00}{5}{150.00}{20.00}{6}
\emline{230.00}{0.00}{7}{230.00}{60.00}{8}
\put(160.00,55.00){\makebox(0,0)[cc]{$A_1$}}
\put(180.00,45.00){\makebox(0,0)[cc]{$A_1$}}
\put(200.00,35.00){\makebox(0,0)[cc]{$A_1$}}
\put(220.00,25.00){\makebox(0,0)[cc]{$A_1$}}
\put(240.00,15.00){\makebox(0,0)[cc]{$D_1$}}
\put(260.00,5.00){\makebox(0,0)[cc]{$D_1$}}
\put(160.00,65.00){\makebox(0,0)[cc]
{$\scriptstyle (11)$}}
\put(180.00,65.00){\makebox(0,0)[cc]
{$\scriptstyle (21)$}}
\put(200.00,65.00){\makebox(0,0)[cc]
{$\scriptstyle (31)$}}
\put(220.00,65.00){\makebox(0,0)[cc]
{$\scriptstyle (41)$}}
\put(240.00,65.00){\makebox(0,0)[cc]
{$\scriptstyle (12)$}}
\put(260.00,65.00){\makebox(0,0)[cc]
{$\scriptstyle (22)$}}
\put(277.00,55.00){\makebox(0,0)[cc]
{$\scriptstyle (11)$}}
\put(277.00,45.00){\makebox(0,0)[cc]
{$\scriptstyle (21)$}}
\put(277.00,35.00){\makebox(0,0)[cc]
{$\scriptstyle (31)$}}
\put(277.00,25.00){\makebox(0,0)[cc]
{$\scriptstyle (41)$}}
\put(277.00,15.00){\makebox(0,0)[cc]
{$\scriptstyle (12)$}}
\put(277.00,5.00){\makebox(0,0)[cc]
{$\scriptstyle (22)$}}
\put(30.00,55.00){\makebox(0,0)[cc]{$I_p$}}
\put(50.00,45.00){\makebox(0,0)[cc]{$I_p$}}
\put(70.00,35.00){\makebox(0,0)[cc]{$I_p$}}
\put(110.00,15.00){\makebox(0,0)[cc]{$I_q$}}
\put(180.00,55.00){\makebox(0,0)[cc]{$A_2$}}
\put(200.00,55.00){\makebox(0,0)[cc]{$A_3$}}
\put(220.00,55.00){\makebox(0,0)[cc]{$A_4$}}
\put(220.00,45.00){\makebox(0,0)[cc]{$A_3$}}
\put(200.00,45.00){\makebox(0,0)[cc]{$A_2$}}
\put(220.00,35.00){\makebox(0,0)[cc]{$A_2$}}
\put(240.00,55.00){\makebox(0,0)[cc]{$B_1$}}
\put(260.00,55.00){\makebox(0,0)[cc]{$B_2$}}
\put(260.00,45.00){\makebox(0,0)[cc]{$B_1$}}
\put(260.00,15.00){\makebox(0,0)[cc]{$D_2$}}
\put(200.00,15.00){\makebox(0,0)[cc]{$C_1$}}
\put(220.00,15.00){\makebox(0,0)[cc]{$C_2$}}
\put(220.00,5.00){\makebox(0,0)[cc]{$C_1$}}
\end{picture}$$
Simultaneously permuting strips in these
matrices, we get the Weyr matrix
$J_{\{\lambda\}}^{\#}$ and all matrices commuting
with it (they form a reduced ${\underline
n}\times {\underline n}$ algebra
$\Lambda(J_{\{\lambda\}}^{\#})$ with equalities
(\ref{3}) of the form $S_{ij}=S_{i'j'}$,
$S_{ij}=0$, and with ${\underline
n}=(p,q,p,q,p,p)$): $$
\special{em:linewidth 0.4pt} \unitlength 0.48mm
\linethickness{0.4pt}
\begin{picture}(280.00,65.00)
{\linethickness{1.2pt}
\put(0.00,0.00){\framebox(120.00,60.00)[cc]{}} }
\put(10.00,55.00){\makebox(0,0)[cc]{$\lambda
I_p$}}
\put(30.00,45.00){\makebox(0,0)[cc]{$\lambda
I_q$}}
\put(50.00,35.00){\makebox(0,0)[cc]{$\lambda
I_p$}}
\put(70.00,25.00){\makebox(0,0)[cc]{$\lambda
I_q$}}
\put(90.00,15.00){\makebox(0,0)[cc]{$\lambda
I_p$}}
\put(110.00,5.00){\makebox(0,0)[cc]{$\lambda
I_p$}} \put(10.00,65.00){\makebox(0,0)[cc]
{$\scriptstyle (11)$}}
\put(30.00,65.00){\makebox(0,0)[cc]
{$\scriptstyle (12)$}}
\put(50.00,65.00){\makebox(0,0)[cc]
{$\scriptstyle (21)$}}
\put(70.00,65.00){\makebox(0,0)[cc]
{$\scriptstyle (22)$}}
\put(90.00,65.00){\makebox(0,0)[cc]
{$\scriptstyle (31)$}}
\put(110.00,65.00){\makebox(0,0)[cc]
{$\scriptstyle (41)$}}
\put(127.00,55.00){\makebox(0,0)[cc]
{$\scriptstyle (11)$}}
\put(127.00,45.00){\makebox(0,0)[cc]
{$\scriptstyle (12)$}}
\put(127.00,35.00){\makebox(0,0)[cc]
{$\scriptstyle (21)$}}
\put(127.00,25.00){\makebox(0,0)[cc]
{$\scriptstyle (22)$}}
\put(127.00,15.00){\makebox(0,0)[cc]
{$\scriptstyle (31)$}}
\put(127.00,5.00){\makebox(0,0)[cc]
{$\scriptstyle (41)$}}
{\linethickness{1.2pt}
\put(150.00,0.00){\framebox(120.00,60.00)[cc]{}}
} \put(160.00,55.00){\makebox(0,0)[cc]{$A_1$}}
\put(180.00,45.00){\makebox(0,0)[cc]{$D_1$}}
\put(200.00,35.00){\makebox(0,0)[cc]{$A_1$}}
\put(220.00,25.00){\makebox(0,0)[cc]{$D_1$}}
\put(240.00,15.00){\makebox(0,0)[cc]{$A_1$}}
\put(260.00,5.00){\makebox(0,0)[cc]{$A_1$}}
\put(160.00,65.00){\makebox(0,0)[cc]
{$\scriptstyle (11)$}}
\put(180.00,65.00){\makebox(0,0)[cc]
{$\scriptstyle (12)$}}
\put(200.00,65.00){\makebox(0,0)[cc]
{$\scriptstyle (21)$}}
\put(220.00,65.00){\makebox(0,0)[cc]
{$\scriptstyle (22)$}}
\put(240.00,65.00){\makebox(0,0)[cc]
{$\scriptstyle (31)$}}
\put(260.00,65.00){\makebox(0,0)[cc]
{$\scriptstyle (41)$}}
\put(277.00,55.00){\makebox(0,0)[cc]
{$\scriptstyle (11)$}}
\put(277.00,45.00){\makebox(0,0)[cc]
{$\scriptstyle (12)$}}
\put(277.00,35.00){\makebox(0,0)[cc]
{$\scriptstyle (21)$}}
\put(277.00,25.00){\makebox(0,0)[cc]
{$\scriptstyle (22)$}}
\put(277.00,15.00){\makebox(0,0)[cc]
{$\scriptstyle (31)$}}
\put(277.00,5.00){\makebox(0,0)[cc]
{$\scriptstyle (41)$}}
\emline{0.00}{40.00}{81}{120.00}{40.00}{82}
\emline{120.00}{20.00}{83}{0.00}{20.00}{84}
\emline{0.00}{10.00}{85}{120.00}{10.00}{86}
\emline{100.00}{0.00}{87}{100.00}{60.00}{88}
\emline{80.00}{60.00}{9}{80.00}{0.00}{10}
\emline{40.00}{0.00}{11}{40.00}{60.00}{12}
\emline{150.00}{40.00}{13}{270.00}{40.00}{14}
\emline{270.00}{20.00}{15}{150.00}{20.00}{16}
\emline{150.00}{10.00}{17}{270.00}{10.00}{18}
\emline{250.00}{0.00}{19}{250.00}{60.00}{20}
\emline{230.00}{60.00}{21}{230.00}{0.00}{22}
\emline{190.00}{0.00}{23}{190.00}{60.00}{24}
\put(50.00,55.00){\makebox(0,0)[cc]{$I_p$}}
\put(70.00,45.00){\makebox(0,0)[cc]{$I_q$}}
\put(90.00,35.00){\makebox(0,0)[cc]{$I_p$}}
\put(110.00,15.00){\makebox(0,0)[cc]{$I_p$}}
\put(180.00,55.00){\makebox(0,0)[cc]{$B_1$}}
\put(200.00,55.00){\makebox(0,0)[cc]{$A_2$}}
\put(220.00,55.00){\makebox(0,0)[cc]{$B_2$}}
\put(220.00,45.00){\makebox(0,0)[cc]{$D_2$}}
\put(240.00,55.00){\makebox(0,0)[cc]{$A_3$}}
\put(240.00,45.00){\makebox(0,0)[cc]{$C_1$}}
\put(260.00,55.00){\makebox(0,0)[cc]{$A_4$}}
\put(260.00,45.00){\makebox(0,0)[cc]{$C_2$}}
\put(240.00,35.00){\makebox(0,0)[cc]{$A_2$}}
\put(260.00,35.00){\makebox(0,0)[cc]{$A_3$}}
\put(260.00,25.00){\makebox(0,0)[cc]{$C_1$}}
\put(260.00,15.00){\makebox(0,0)[cc]{$A_2$}}
\put(220.00,35.00){\makebox(0,0)[cc]{$B_1$}}
\end{picture}
$$
\end{example}

\begin{proof}[Proof of Theorem \ref{t2'.1}]
We may suppose that $A$ is a Jordan matrix
\begin{equation*}
J=J_{\{\lambda_1\}}\oplus\dots\oplus
J_{\{\lambda_r\}},\quad \lambda_1\prec\dots\prec
\lambda_r,
\end{equation*}
where $J_{\{\lambda\}}$ denotes a Jordan matrix
with a single eigenvalue $\lambda$. Then
\begin{equation*}
J^{\#}=J_{\{\lambda_1\}}^{\#}\oplus\dots \oplus
J_{\{\lambda_r\}}^{\#},\quad \Lambda(J^{\#})=
\Lambda(J_{\{\lambda_1\}}^{\#})\times \dots
\times \Lambda(J_{\{\lambda_r\}}^{\#});
\end{equation*}
the second since $SJ^{\#}=J^{\#}S$ if and only if
$S=S_1\oplus\dots\oplus S_r$ and
$S_iJ_{\{\lambda_i\}}^{\#}=
J_{\{\lambda_i\}}^{\#}S_i$.

So we may restrict ourselves to a Jordan matrix
$J_{\{\lambda\}}$ with a single eigenvalue
$\lambda$; it reduces to the form
\begin{equation}       \label{2'.3}
J_{\{\lambda\}}^{\text{\it +}} =J_{p_1}(\lambda
I_{n_1})\oplus\dots\oplus J_{p_l}(\lambda
I_{n_l}), \quad p_1>\dots> p_l.
\end{equation}

The matrix \eqref{2'.3} consists of $l$
horizontal  and $l$ vertical strips, the $i$th
strip is divided into $p_i$ substrips. We will
index the $\alpha$th substrip of the $i$th strip
by the pair $(\alpha,i)$. Permuting vertical and
horizontal substrips such that they become
lexicographically ordered with respect to these
pairs,
\begin{equation}       \label{2'.3a}
(11), (12),\dots, (1l), (21), (22),\ldots,
\end{equation}
we obtain the Weyr form $J_{\{\lambda\}}^{\#}$ of
$J_{\{\lambda\}}$ (see Example \ref{e2'.1}). The
partition into substrips is its standard
${\underline n}\times {\underline n}$ partition.

It is well known (and is proved by direct
calculations, see \cite[Sect. VIII, \S 2]{gan})
that all matrices commuting with the matrix
\eqref{2'.3} have the form
$C=[C_{ij}]_{i,j=1}^{l}$ where each $C_{ij}$ is
of the form
$$
\unitlength 0.80mm \linethickness{1.2pt}
\begin{picture}(145.00,63.00)
\put(0.00,10.00){\framebox(60.00,40.00)[cc]{}}
\put(100.00,0.00){\framebox(40.00,60.00)[cc]{}}
\put(55.00,15.00){\makebox(0,0)[cc]{$X_1$}}
\put(55.00,25.00){\makebox(0,0)[cc]{$X_2$}}
\put(55.00,36.00){\makebox(0,0)[cc]{$\vdots$}}
\put(55.00,45.00){\makebox(0,0)[cc]{$X_{p_i}$}}
\put(55.00,53.00){\makebox(0,0)[cc]{$\scriptstyle(p_jj)$}}
\put(45.00,45.00){\makebox(0,0)[cc]{$\cdots$}}
\put(43.00,37.00){\makebox(0,0)[cc]{.}}
\put(45.00,35.00){\makebox(0,0)[cc]{.}}
\put(47.00,33.00){\makebox(0,0)[cc]{.}}
\put(45.00,25.00){\makebox(0,0)[cc]{.}}
\put(35.00,35.00){\makebox(0,0)[cc]{$X_1$}}
\put(35.00,45.00){\makebox(0,0)[cc]{$X_2$}}
\put(25.00,45.00){\makebox(0,0)[cc]{$X_1$}}
\put(30.00,53.00){\makebox(0,0)[cc]{$\cdots$}}
\put(5.00,53.00){\makebox(0,0)[cc]{$
\scriptstyle(1j)$}}
\put(65.00,45.00){\makebox(0,0)[cc]{$\scriptstyle
(1i)$}}
\put(65.00,31.00){\makebox(0,0)[cc]{$\vdots$}}
\put(65.00,15.00){\makebox(0,0)[cc]{$\scriptstyle
(p_ii)$}}
\put(105.00,55.00){\makebox(0,0)[cc]{$X_1$}}
\put(115.00,55.00){\makebox(0,0)[cc]{$X_2$}}
\put(125.00,55.00){\makebox(0,0)[cc]{$\cdots$}}
\put(135.00,55.00){\makebox(0,0)[cc]{$X_{p_j}$}}
\put(115.00,45.00){\makebox(0,0)[cc]{$X_1$}}
\put(127.00,43.00){\makebox(0,0)[cc]{.}}
\put(125.00,45.00){\makebox(0,0)[cc]{.}}
\put(123.00,47.00){\makebox(0,0)[cc]{.}}
\put(135.00,46.00){\makebox(0,0)[cc]{$\vdots$}}
\put(123.00,37.00){\makebox(0,0)[cc]{.}}
\put(125.00,35.00){\makebox(0,0)[cc]{.}}
\put(127.00,33.00){\makebox(0,0)[cc]{.}}
\put(135.00,35.00){\makebox(0,0)[cc]{$X_2$}}
\put(135.00,25.00){\makebox(0,0)[cc]{$X_1$}}
\put(110.00,10.00){\makebox(0,0)[cc]{\LARGE 0}}
\put(10.00,20.00){\makebox(0,0)[cc]{\LARGE 0}}
\put(105.00,63.00){\makebox(0,0)[cc]{$\scriptstyle
(1j)$}}
\put(120.00,63.00){\makebox(0,0)[cc]{$\cdots$}}
\put(135.00,63.00){\makebox(0,0)[cc]{$\scriptstyle
(p_jj)$}}
\put(145.00,55.00){\makebox(0,0)[cc]{$\scriptstyle
(1i)$}}
\put(145.00,31.00){\makebox(0,0)[cc]{$\vdots$}}
\put(145.00,5.00){\makebox(0,0)[cc]{$\scriptstyle
(p_ii)$}} \put(43.00,27.00){\makebox(0,0)[cc]{.}}
\put(47.00,23.00){\makebox(0,0)[cc]{.}}
\put(80.00,31.00){\makebox(0,0)[cc]{\text{or}}}
\end{picture}
$$\\[-13pt]

\noindent if, respectively, $p_i\le p_j$ or
$p_i\ge p_j$. Hence, if a {\it nonzero} subblock
is located at the intersection of the
$(\alpha,i)$ horizontal substrip and the
$(\beta,j)$ vertical substrip, then either
$\alpha= \beta$ and $i\le j$, or $\alpha< \beta$.
Rating the substrips of $C$ in the lexicographic
order \eqref{2'.3a}, we obtain an upper
block-triangular ${\underline n}\times
{\underline n}$ matrix $S$ that commutes with
$J_{\{\lambda\}}^{\#}$. The matrices $S$ form the
algebra $\Lambda(J_{\{\lambda\}}^{\#})$, which is
a reduced algebra with equations (\ref{3}) of the
form $S_{ij}=S_{i'j'}$ and $S_{ij}=0$.
\end{proof}

Note that $J_{\{\lambda\}}^{\#}$ is obtained from
\begin{equation}       \label{2'.4}
J_{\{\lambda\}}=J_{k_1}(\lambda)\oplus\dots
\oplus J_{k_t}(\lambda),\quad k_1\ge\dots\ge k_t,
\end{equation}
as follows: We collect the first columns of $
J_{k_1}(\lambda), \dots, J_{k_t}(\lambda)$ on the
first $t$ columns of $ J_{\{\lambda\}}$, then
permute the rows as well. Next collect the second
columns and permute the rows as well, continue
the process until $J_{\{\lambda\}}^{\#}$ is
achieved.

\begin{remark}            \label{r2'.1}
The block-triangular form of $\Lambda(J^{\#})$ is
easily explained with the help of Jordan chains.
The matrix \eqref{2'.4} represents a linear
operator $\cal A$ in the lexicographically
ordered basis $\{e_{ij}\}_{i=1}^t{}_{j=1}^{k_i}$
such that
\begin{equation}        \label{2'.5}
{\cal A}-\lambda{\bf 1}: e_{ik_i} \mapsto \dots
\mapsto e_{i2}\mapsto e_{i1} \mapsto 0.
\end{equation}
The matrix $J_{\{\lambda\}}^{\#}$ represents the
same linear operator $\cal A$ but in the basis
$\{e_{ij}\}$, lexicographically ordered with
respect to the pairs $(j,i)$:
\begin{equation}       \label{2'.6}
e_{11},\ e_{21},\dots, e_{t1},\ e_{12},\
e_{22},\ldots
\end{equation}
Clearly, $S^{-1} J_{\{\lambda\}}^{\#} S=
J_{\{\lambda\}}^{\#}$ for a nonsingular matrix
$S$ if and only if $S$ is the transition matrix
from the basis \eqref{2'.6} to another Jordan
basis ordered like \eqref{2'.6}. This transition
can be realized by a sequence of operations of
the following form: the $i$th Jordan chain
\eqref{2'.5} is replaced with $\alpha
e_{ik_i}+\beta e_{i,k_{i'}-p} \mapsto \alpha
e_{i,k_i-1}+\beta e_{i',k_{i'}-p-1}\mapsto
\cdots$, where $\alpha,\beta\in k,\ \alpha\ne 0$,
and $p\ge\max\{0, k_{i'}-k_{i}\}$. Since a long
chain cannot be added to a shorter chain, the
matrix $S$ is block-triangular.
\end{remark}


\subsection{Algorithm}        \label{sec3}

In this section, we give an algorithm for
reducing a matrix $M$ to a canonical form under
$\Lambda$-simi\-larity with a reduced
$\underline{n}\times \underline{n}$ algebra
$\Lambda$.

We apply to $M$ the partition
$\underline{n}\times \underline{n}$:
 $$
 M=\begin{bmatrix}
                  M_{11} & \cdots  & M_{1t} \\[-4pt] \hdotsfor{3}\\
                  M_{t1} & \cdots  & M_{tt}
            \end{bmatrix}, \quad
     M_{ij} \in k^{n_i \times n_j}. $$

A block $M_{ij}$ will be called {\it stable} if
it remains invariant under $\Lambda$-similarity
transformations with $M$. Then $M_{ij}=a_{ij}I$
whenever $i\sim j$ and $M_{ij}=0$ (we put
$a_{ij}=0$) whenever $i\not\sim j$ since the
equalities $S_{ii}^{-1}M_{ij}S_{jj}=M_{ij}$ must
hold for all nonsingular block-diagonal matrices
$S=S_{11}\oplus S_{22}\oplus\dots\oplus S_{tt}$
satisfying (\ref{2}).

If all the blocks of $M$ are stable, then $M$ is
invariant under $\Lambda$-similarity, hence $M$
is canonical ($M^{\infty}=M$).

Let there exist a nonstable block. We put the
blocks of $M$ in order
\begin{equation}      \label{5}
M_{t1}<M_{t2}<\dots<M_{tt}<M_{t-1,1}<
M_{t-1,2}<\dots<M_{t-1,t}<\cdots
\end{equation}
and reduce the first (with respect to this
ordering) nonstable block $M_{lr}$. Let
$M'=S^{-1}MS$, where $S\in \Lambda^{*}$ has the
form (\ref{1}). Then the $(l,r)$ block of the
matrix $MS=SM'$ is $$
M_{l1}S_{1r}+M_{l2}S_{2r}+\dots+M_{lr}S_{rr}
=S_{ll}M'_{lr}+S_{l,l+1}M'_{l+1,r}+
\dots+S_{lt}M'_{tr}
$$ or, since all  $M_{ij}<M_{lr}$ are stable,
\begin{equation}      \label{6}
a_{l1}S_{1r}+\dots+a_{l,r-1}S_{r-1,r}+
M_{lr}S_{rr}=S_{ll}M'_{lr}+
S_{l,l+1}a_{l+1,r}+\dots+S_{lt}a_{tr}
\end{equation}
(we have removed in (\ref{6}) all summands with
$a_{ij}=0$; their sizes may differ from the size
of $M_{lr}$).

Let ${\cal I, J}\in T/\!\sim$ be the equivalence
classes such that $l \in {\cal I}$ and $r \in
{\cal J}$.

\begin{description}
\item[\it Case I:]
{\it the $q_{{\cal IJ}}$ equalities (\ref{3}) do
not imply}
\begin{equation}      \label{7}
   a_{l1}S_{1r}+a_{l2}S_{2r}+\dots+a_{l,r-1}S_{r-1,r}
   =S_{l,l+1}a_{l+1,r}+\dots+S_{lt}a_{tr}
\end{equation}
(i.e., there exists a nonzero admissible addition
to $M_{lr}$ from other blocks). Then we make
$M'_{lr}=0$ using $S \in \Lambda^{*}$ of the form
(\ref{1}) that has the diagonal $S_{ii}= I$
($i=1,\dots,t$) and fits both (\ref{3}) and
(\ref{6}) with $M'_{lr}=0$.

\item[\it Case II:]
{\it the $q_{{\cal IJ}}$ equalities (\ref{3})
imply (\ref{7}); $i\not\sim j$.} Then (\ref{6})
simplifies to
\begin{equation}      \label{8}
   M_{lr}S_{rr}= S_{ll}M'_{lr},
\end{equation}
where $S_{rr}$ and $S_{ll}$ are arbitrary
nonsingular matrices. We chose $S\in \Lambda^{*}$
such that
 $$  M'_{lr}= S_{ll}^{-1}M_{lr}S_{rr}=
      \left[ \begin{array}{cc}
                    0  &  I      \\
                    0  &  0
            \end{array} \right]. $$

\item[\it Case III:]
{\it the $q_{{\cal IJ}}$ equalities (\ref{3})
imply (\ref{7}); $i\sim j$.} Then (\ref{6})
simplifies to the form (\ref{8}) with an
arbitrary nonsingular matrix $S_{rr}=S_{ll}$; $
M'_{lr}= S_{ll}^{-1}M_{lr}S_{rr}$ is chosen as a
Weyr matrix.
\end{description}

We restrict ourselves to those admissible
transformations with $M'$ that preserve
$M'_{lr}$. Let us prove that they are the
$\Lambda'$-similarity transformations with
\begin{equation}        \label{8'}
\Lambda':=\{S\in\Lambda\,|\, SM'\equiv M'S\},
\end{equation}
where $A\equiv B$ means that $A$ and $B$ are
$\underline{n}\times \underline{n}$ matrices and
$A_{lr}=B_{lr}$ for the pair $(l,r)$. The
transformation $M'\mapsto S^{-1}M'S$, $S
\in(\Lambda')^*$, preserves $M'_{lr}$ (i.e.
$M'\equiv S^{-1}M'S$) if and only if $ SM'\equiv
M'S$ since $S$ is upper block-triangular and $M'$
coincides with $S^{-1}M'S$ on the places of all
(stable) blocks $M_{ij}<M_{lr}$. The set
$\Lambda'$ is an algebra: let $S,R\in\Lambda'$,
then $M'S$ and $SM'$ coincide on the places of
all $M_{ij}<M_{lr}$ and $R$ is upper
block-triangular, hence $M'SR\equiv SM'R$;
analogously, $SM'R\equiv SRM'$ and $SR\in
\Lambda'$. The matrix algebra $\Lambda'$ is a
reduced algebra since $\Lambda'$ consists of all
$S\in\Lambda$ satisfying the condition \eqref{6}
with $M'_{lr}$ instead of $M_{lr}$.

In Case I, $\Lambda'$ consists of all $S\in
\Lambda$ satisfying (\ref{7}) (we add it to the
system \eqref{3}). In Case II, $\Lambda'$
consists of all $S\in \Lambda$ for which
  $S_{ll}\matr{0}{I}{0}{0} =\matr{0}{I}{0}{0}S_{rr}$,
that is, $$ S_{ll}= \left[ \begin{array}{cc}
                  P_1  & P_2     \\
                    0  & P_3
            \end{array} \right],\quad
   S_{rr}= \left[ \begin{array}{cc}
                  Q_1  & Q_2     \\
                    0  & Q_3
            \end{array} \right], \quad
P_1 = Q_3.$$ In Case III,  $\Lambda'$ consists of
all $S\in \Lambda$ for which the blocks $S_{ll}$
and $S_{rr}$ are equal and commute with the Weyr
matrix $ M'_{lr}$. (It gives an additional
partition of $S\in\Lambda$ in Cases II and III;
we rewrite \eqref{2}--\eqref{3} for smaller
blocks and add the equalities that are needed for
$S_{ll}M'_{lr}=M'_{lr}S_{rr}$.)
\medskip

In this manner, for every pair $(M,\Lambda)$ we
construct a new pair $(M',\Lambda')$ with
$\Lambda'\subset \Lambda$. If $M'$ is not
invariant under $\Lambda'$-similarity, then we
repeat this construction (with an additional
partition of $M'$ in accordance with the
structure of $\Lambda'$) and obtain
$(M'',\Lambda'')$, and so on. Since at every step
we reduce a new block, this process ends with a
certain pair $(M^{(p)}, \Lambda^{(p)})$ in which
all the blocks of $M^{(p)}$ are stable (i.e.
$M^{(p)}$ is $\Lambda^{(p)}$-similar only to
itself). Putting
$(M^{\infty},\Lambda^{\infty}):=(M^{(p)},
\Lambda^{(p)})$, we get the sequence
\begin{equation}           \label{10a}
(M^0, \Lambda^0)=(M, \Lambda), \: (M',
\Lambda'),\dots, \: (M^{(p)}, \Lambda^{(p)})=
(M^{\infty}, \Lambda^{\infty}),
\end{equation}
where
\begin{equation}           \label{10b}
\Lambda^{\infty}=\{S\in\Lambda\,|\,
M^{\infty}S=SM^{\infty}\}.
\end{equation}

\begin{definition}          \label{d1.2'}
The matrix $M^{\infty}$ will be called the
$\Lambda$-{\it canonical form of $M$}.
\end{definition}

\begin{theorem}                \label{t1.1}
Let $\Lambda \subset k^{n\times n}$ be a reduced
matrix algebra. Then $M\sim_{\Lambda}M^{\infty}$
for every $M\in k^{ n\times n}$ and $M
\sim_{\Lambda} N$ if and only if
$M^{\infty}=N^{\infty}$.
\end{theorem}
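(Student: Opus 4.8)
The plan is to establish the two assertions in turn: first that $M \sim_\Lambda M^\infty$, and second that $M \sim_\Lambda N$ if and only if $M^\infty = N^\infty$. The first assertion is almost immediate from the construction: each step of the algorithm passes from $(M^{(i)}, \Lambda^{(i)})$ to $(M^{(i+1)}, \Lambda^{(i+1)})$ by setting $M^{(i+1)} = S^{-1} M^{(i)} S$ for some $S \in (\Lambda^{(i)})^* \subset \Lambda^*$, so composing the transition matrices of all steps gives a single $S \in \Lambda^*$ with $S^{-1} M S = M^\infty$. One has to check that the process terminates: at each step a previously nonstable block $M_{lr}$ (the $<$-minimal one) becomes stable and stays stable, because $\Lambda' \subset \Lambda$ only shrinks the transformation group and $M'$ agrees with all its $\Lambda'$-conjugates on the already-reduced blocks; since there are only finitely many blocks (and the refinement of the partition in Cases II and III only subdivides blocks finitely often — the block sizes strictly decrease), the sequence \eqref{10a} is finite. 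The verification of \eqref{10b} is built into the definition of $\Lambda'$ at each step, iterated.

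For the ``if'' direction of the second assertion: if $M^\infty = N^\infty =: C$, then $M \sim_\Lambda C \sim_\Lambda N$ by the first part (using that $\sim_\Lambda$ is an equivalence relation, established at the end of Section~\ref{sec2}), so $M \sim_\Lambda N$. The substantive content is the ``only if'' direction, and this is the step I expect to be the main obstacle. The claim is that the algorithm is \emph{canonical}: if $N = T^{-1} M T$ with $T \in \Lambda^*$, then $N^\infty = M^\infty$. The natural approach is induction on the step number of the algorithm, showing simultaneously that $M^{(i)}$ and $N^{(i)}$ are $\Lambda^{(i)}$-similar (with $\Lambda^{(i)}$ the same reduced algebra for both) via some $T_i \in (\Lambda^{(i)})^*$, and that the block reduced at step $i$ has the same value in both. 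The base case is the hypothesis $N = T^{-1} M T$. For the inductive step, suppose $M^{(i)} \sim_{\Lambda^{(i)}} N^{(i)}$ via $T_i$, and let $M_{lr}$ be the $<$-minimal nonstable block. One must show that the ``reduced value'' of the $(l,r)$ block produced by the algorithm is the same for $M^{(i)}$ and for $N^{(i)}$ — this is where the three cases are handled:

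In Case~I the reduced value is forced to be $0$ for both, so there is nothing to prove beyond observing that $\Lambda^{(i+1)}$ is the same algebra in both computations (it is defined by adjoining equation \eqref{7} to \eqref{3}, independent of $M$), and that $T_i$ can be adjusted by the $S$'s used to zero out the block. In Case~II the reduced value is $\left[\begin{smallmatrix} 0 & I \\ 0 & 0\end{smallmatrix}\right]$ with the block sizes determined by $\operatorname{rank} M_{lr}$; since $M_{lr}^{(i)} = S_{ll}^{-1} N_{lr}^{(i)} S_{rr}$ for the relevant diagonal blocks of $T_i$ (as these are unconstrained by \eqref{3} in this case), the ranks agree, so the reduced forms and the refined partitions agree, and again $\Lambda^{(i+1)}$ depends only on these sizes, not on $M$. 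Case~III is the heart of the matter: the diagonal blocks $T_{i,ll} = T_{i,rr} =: S$ satisfy $M_{lr}^{(i)} = S^{-1} N_{lr}^{(i)} S$, i.e. $M_{lr}^{(i)}$ and $N_{lr}^{(i)}$ are \emph{similar} matrices, and the algorithm replaces each by its Weyr form; by Theorem~\ref{t2'.1}, similar matrices have the \emph{same} Weyr form, so the reduced values coincide and the standard partitions coincide, whence $\Lambda^{(i+1)}$ is again the same reduced algebra for both. In all three cases, after the reduction one modifies $T_i$ (multiplying by the transformations used to bring $M_{lr}^{(i)}$ and $N_{lr}^{(i)}$ to the common reduced value) to obtain $T_{i+1} \in (\Lambda^{(i+1)})^*$ with $M^{(i+1)} = T_{i+1}^{-1} N^{(i+1)} T_{i+1}$; here one uses that such a $T_{i+1}$ lands in $\Lambda^{(i+1)}$ precisely because it preserves the $(l,r)$ block, which is the defining condition \eqref{8'}. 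Iterating to the end of the (finite) sequence yields $M^\infty = (N^\infty$ transformed by $T_p)$, and since $M^\infty$ is stable under $\Lambda^\infty$ while $T_p \in (\Lambda^\infty)^* = (\Lambda^{(p)})^*$, we get $M^\infty = N^\infty$. The care needed is entirely in making the bookkeeping of ``the partition and the algebra $\Lambda^{(i)}$ depend only on the equivalence class of $M$, not on $M$ itself'' precise — that is the real obstacle, and Theorem~\ref{t2'.1} (uniqueness of the Weyr form under similarity) is the key input that makes Case~III go through.
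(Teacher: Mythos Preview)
Your proposal is correct and follows essentially the same inductive strategy as the paper's proof: maintain the invariant that $M^{(i)} \sim_{\Lambda^{(i)}} N^{(i)}$ with the same $\Lambda^{(i)}$, and verify that the first nonstable block is the same position and reduces to the same value for both. The paper compresses your three-case analysis into the single sentence ``by reasons of symmetry, $N_{lr}$ is the first nonstable block of $N$; moreover, $M_{lr}$ and $N_{lr}$ are reduced to the same form,'' but the underlying reasoning is identical --- in particular, the key point you isolate (that in Cases~II and~III the implication \eqref{3}$\Rightarrow$\eqref{7} forces $M_{lr}(T_i)_{rr}=(T_i)_{ll}N_{lr}$, whence the ranks resp.\ Weyr forms agree via Theorem~\ref{t2'.1}) is exactly what the paper is tacitly using.
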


\begin{proof}
Let $\Lambda$ be a reduced $\underline{n} \times
\underline{n}$ algebra, $M \sim_\Lambda N$, and
let $M_{lr}$ be the first nonstable block of $M$.
Then $M_{ij}$ and $N_{ij}$ are stable blocks
(moreover, $M_{ij}=N_{ij}$) for all
$M_{ij}<M_{lr}$. By reasons of symmetry, $N_{lr}$
is the first nonstable block of $N$; moreover,
$M_{lr}$ and $N_{lr}$ are reduced to the same
form: $M'_{lr} = N'_{lr}$. We obtain pairs
$(M',\Lambda')$ and $(N',\Lambda')$ with the same
$\Lambda'$ and $M'\sim_{\Lambda'} N'.$ Hence
$M^{(i)} \sim_{\Lambda^{(i)}} N^{(i)}$ for all
$i$, so $M^{\infty}=N^{\infty}$.
\end{proof}

\begin{example}                   \label{e1.4}
In Example \ref{e1.2} we considered the canonical
form problem for a pair of matrices under
simultaneous similarity. Suppose the first matrix
is reduced to the Weyr matrix
         $W= \left[  \begin{array}{cc}
             \lambda I_2  & I_2    \\
                       0  &  \lambda I_2
            \end{array} \right].$
Preserving $W$, we may reduce the second matrix
by transformations of $\Lambda$-similarity, where
$\Lambda$ consists of all $4\times 4$ matrices of
the form
         $  \left[  \begin{array}{cc}
                     S_1  & S_2   \\
                       0  & S_1
            \end{array} \right],\
 S_i\in k^{2\times 2}.$
For instance, one of the $\Lambda$-canonical
matrices is
\begin{equation}       \label{9}
C=  \left[  \begin{tabular}{c|c}
        $C_3$  &\!\!\!\!\!  \begin{tabular}{c|c}
       $C_6$ & $C_7$  \\  \hline
        $C_4$ &$ C_5$
 \end{tabular}\!\!\!
\\  \hline
                      $ C_1$  & $C_2$
            \end{tabular} \right]
=\left[ \begin{tabular}{c|c}
   $ \!\!\!\! \begin{array}{cc}-1&1\\
0&-1\end{array}$\!\!\!\! &
 \!\!\!\begin{tabular}{c|c}2&$\emptyset$ \\
 \hline 0&1\end{tabular}\!\!\! \\  \hline
                      $ 3I_2$  & $\emptyset$
            \end{tabular} \right],
\end{equation}
where $C_1,\dots,C_7$ are reduced blocks and
$C_q= \emptyset$ means that $C_q$ was made zero
by additions from other blocks (Case I of the
algorithm). Hence, $(W,C)$ may be considered as a
canonical pair of matrices under similarity. Note
that $\matr{W}{C}{0}{0}$ is a canonical matrix
with respect to $D$-similarity, where $D=\{
S\oplus S\,|\,S\in k^{2\times 2}\}$.

\begin{definition}          \label{d1.2a}
By the canonical form of a pair of $n\times n$
matrices $(A,B)$ under simultaneous similarity is
meant a pair $(W,C)$, where $\matr{W}{C}{0}{0}$
is the canonical form of the matrix
$\matr{A}{B}{0}{0}$ with respect to
$D$-similarity with $D=\{ S\oplus S\,|\,S\in
k^{n\times n}\}$.
\end{definition}

Clearly, each pair of matrices is similar to a
canonical pair and two pairs of matrices are
similar if and only if they reduce to the same
canonical pair. The full list of canonical pairs
of complex $4\times 4$ matrices under
simultaneous similarity was presented in
\cite{ser_gal}.
\end{example}

\begin{remark}                \label{r1.1}
Instead of (\ref{5}), we may use another linear
ordering in the set of blocks, for example, $
M_{t1}<M_{t-1,1}<
\dots<M_{11}<M_{t2}<M_{t-1,2}<\cdots$ or $
M_{t1}<M_{t-1,1}<M_{t2}<M_{t-2,1}<M_{t-1,2}
<M_{t3}<\cdots. $ It is necessary only that
$(i,j)\ll (i',j')$ implies $ M_{ij}<M_{i'j'}$,
where $(i,j)\ll (i',j')$ indicates the existence
of a nonzero addition from $M_{ij}$ to $M_{i'j'}$
and is defined as follows:
\end{remark}

\begin{definition}        \label{d1.3}
Let $\Lambda$ be a reduced $\underline{n}\times
\underline{n}$ algebra. For unequal pairs
$(i,j),(i',j')\in T\times T$ (see \eqref{0}), we
put $(i,j)\ll (i',j')$ if either $i=i'$ and there
exists $S\in\Lambda^*$ with $S_{jj'}\ne 0$, or
$j=j'$ and there exists $S\in\Lambda^*$ with
$S_{i'i}\ne 0$.
\end{definition}


\subsection{Structured $\Lambda$-canonical matrices}
\label{sec4} The structure of a
$\Lambda$-canonical matrix $M$ will be clearer if
we partition it into boxes $M_1, M_2,\dots$, as
it was made in \eqref{9}.

\begin{definition}      \label{d2.0}
Let $M=M^{(r)}$ for a certain $r\in
\{0,1,\dots,p\}$ (see \eqref{10a}). We partition
its reduced part into {\it boxes}
$M_1,M_2,\dots,M_{q_{r+1}-1}$ as follows: Let
$\Lambda^{(l)}\ (1\le l\le r)$ be a reduced
$\underline{n}^{(l)}\times\underline{n}^{(l)}$
algebra from the sequence \eqref{10a}, we denote
by $M^{(l)}_{ij}$ the blocks of $M$ under the
$\underline{n}^{(l)}\times \underline{n}^{(l)}$
partition. Then $M_{q_{l+1}}$ for $l\ne p$
denotes the first nonstable block among
$M^{(l)}_{ij}$ with respect to
$\Lambda^{(l)}$-similarity (it is reduced when
$M^{(l)}$ is transformed to $M^{(l+1)}$);
$M_{q_{_l}+1}<\dots< M_{q_{_{l+1}}-1}\ (q_0:=0)$
are  all the blocks $M_{ij}^{(l)}$ such that

(i) if $l<p$, then $M_{ij}^{(l)}<
M_{q_{_{l+1}}}$;

(ii) if $l>0$, then $M_{ij}^{(l)}$ is not
contained in the boxes $M_1,\dots,M_{q_{_l}}$.

\noindent (Note that each box $M_i$ is 0,
$\matr{0}{I}{0}{0}$, or a Weyr matrix.)
Furthermore, put
\begin{equation}       \label{9'}
\Lambda_{q_{_l}}=\Lambda_{q_{_l}+1}=
\dots=\Lambda_{q_{_{l+1}}-1} :=\Lambda^{(l)}.
\end{equation}
Generalizing the equalities \eqref{8'} and
\eqref{10b}, we obtain
\begin{equation}       \label{9''}
\Lambda_i=\{S\in\Lambda\,|\,MS\equiv_i SM\},
\end{equation}
where $MS\equiv_i SM$ means that $MS-SM$ is zero
on the places of $M_1,\dots,M_i$.
 \end{definition}

\begin{definition}      \label{d2.00}
By a {\it structured $\Lambda$-canonical matrix}
we mean a $\Lambda$-canonical matrix $M$ which is
divided into boxes $M_1, M_2,\dots,M_{q_{p+1}-1}$
and each box $M_i$ that falls into Case I from
Section \ref{sec3} (and hence is 0) is marked by
$\emptyset$ (see \eqref{9}).
\end{definition}

Now we describe the construction of
$\Lambda$-canonical matrices.

\begin{definition}       \label{d2.01}
By a {\it part} of a matrix
$M=[a_{ij}]_{i,j=1}^n$ is meant an arbitrary set
of its entries given with their indices. By a
{\it rectangular part} we mean a part of the form
$B=[a_{ij}],\ p_1 \le i \le p_2, \ q_1 \le j \le
q_2.$ We consider a partition of $M$ into
disjoint rectangular parts (which is not, in
general, a partition into substrips, see the
matrix \eqref{9}) and write, generalizing
(\ref{5}), $B<B'$ if either $p_2 = p'_2$ and $q_1
< q'_1$, or $p_2 > p'_2$.
\end{definition}

\begin{definition}              \label{d2.1}
Let $M=[M_{ij}]$ be an
$\underline{n}\times\underline{n}$ matrix
partitioned into rectangular parts
$M_1<M_2<\cdots<M_m$ such that this partition
refines the partition into the blocks $M_{ij}$,
and let each $M_i$ be equal to $0,\
\matr{0}{I}{0}{0}$, or a Weyr matrix. For every
$q\in\{0,1,\dots, m\}$, we define a subdivision
of strips into $q$-strips as follows: The 0-{\it
strips} are the strips of $M$. Let $q>0$. We make
subdivisions of $M$ into substrips that extend
the partitions of $M_1,\dots,M_q$ into cells 0,
$I$, $\lambda I$ (i.e., the new subdivisions run
the length of every boundary of the cells). If a
subdivision passes through a cell $I$ or $\lambda
I$ from $M_1,\dots,M_q$, then we construct the
perpendicular subdivision such that the cell
takes the form
$$
\begin{bmatrix} I&0\\ 0&I \end{bmatrix}
\quad {\rm or} \quad
\begin{bmatrix} \lambda I&0\\ 0&\lambda I
\end{bmatrix},$$
and repeat this construction for all new
divisions until $M_1,\dots,M_q$ are partitioned
into cells $0,\ I$, or $\lambda I$. The obtained
substrips will be called the {\it $q$-strips of
$M$}; for example, the partition into $q$-strips
of the matrix \eqref{9} has the form $$
\left[\begin{tabular}{cc|cc} -1&1&2&0\\0&-1&0&1\\
\hline 3&0&0&0\\0&3&0&0
\end{tabular}\right] \
\text{for $q=0,1,2$};\
\left[\begin{tabular}{c|c|c|c} -1&1&2&0\\ \hline
0&-1&0&1\\ \hline 3&0&0&0\\  \hline 0&3&0&0
\end{tabular}\right] \
\text{for $q=3,4,5,6,7$}. $$ We say that the
$\alpha$th $q$-strip of an $i$th (horizontal or
vertical) strip {\it is linked to} the $\beta$th
$q$-strip of an $j$th strip if (i) $\alpha=\beta$
and $i\sim j$ (including $i=j$; see \eqref{0}),
or if (ii) their intersection is a (new) cell $I$
from $M_1,\dots,M_q$, or if (iii) they are in the
transitive closure of (i) and (ii).
\end{definition}

Note that if $M$ is a $\Lambda$-canonical matrix
with the boxes $M_1,\dots, M_{q_{p+1}-1}$ (see
Definition \ref{d2.0}), then
$M_1<\dots<M_{q_{p+1}-1}$. Moreover, if
$\Lambda_q$ ($1\le q< q_{p+1}$, see \eqref{9'})
is a reduced ${\underline{n}\,}_{q}\times
{\underline{n}\,}_q$ algebra with the equivalence
relation $\sim$ (see \eqref{0}), then the
partition into $q$-strips is the
${\underline{n}\,}_q\times {\underline{n}\,}_q$
partition; the $i$th $q$-strip is linked with the
$j$th $q$-strip if and only if $i\sim j$.

 \begin{theorem}        \label{t2.1}
Let $\Lambda$ be a reduced
$\underline{n}\times\underline{n}$ algebra and
let $M$ be an arbitrary
$\underline{n}\times\underline{n}$ matrix
partitioned into rectangular parts
$M_1<M_2<\dots<M_m,$ where each $M_i$ is equal to
$\emptyset$ (a marked zero block),
$\matr{0}{I}{0}{0}$, or a Weyr matrix. Then $M$
is a structured $\Lambda$-canonical matrix with
boxes $M_1,\dots,M_m$ if and only if each $M_q$
$(1\le q\le m)$ satisfies the following
conditions:
\begin{itemize}
\item[(a)] $M_q$ is the intersection of two
$(q-1)$-strips.

\item[(b)]
Suppose there exists $M'=S^{-1}MS$ (partitioned
into rectangular parts conformal to $M$; $S\in
\Lambda^{*}$) such that
$M'_1=M_1,\dots,M'_{q-1}=M_{q-1},$ but $M'_q\ne
M_q$. Then $M_q=\emptyset$.

\item[(c)] Suppose $M'$ from (b) does not exist.
Then $M_q$ is a Weyr matrix if the horizontal and
the vertical $(q-1)$-strips of $M_q$ are linked;
$ M_q =\matr{0}{I}{0}{0}$ otherwise.
\end{itemize}
\end{theorem}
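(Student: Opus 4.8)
The plan is to show that the three conditions (a)--(c) are exactly the conditions that characterize, step by step, the output of the algorithm of Section~\ref{sec3} applied to $M$, when the boxes $M_1,\dots,M_m$ are produced in order. The key observation is that Definition~\ref{d2.1} was designed precisely so that the partition into $q$-strips encodes the $\underline n^{(l)}\times\underline n^{(l)}$ partition associated with the reduced algebra $\Lambda^{(l)}$ at the stage of the algorithm at which $M_q$ is being reduced, and the linking relation encodes the equivalence relation $\sim$ of that reduced algebra together with the equalities (\ref{3}) of the form $S_{ij}=S_{i'j'}$ that are accumulated in Cases~II and~III. So the heart of the argument is to verify, by induction on $q$, that $\Lambda_{q-1}$ (see (\ref{9''})) is a reduced algebra whose standard partition is exactly the partition into $(q-1)$-strips, and whose equivalence classes are exactly the linking classes of $(q-1)$-strips.

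First I would set up the induction. For $q=1$: $\Lambda_0=\Lambda$, the $0$-strips are the strips of $M$, and the linking relation on $0$-strips is just $i\sim j$ in $T$; so the claim holds by hypothesis. For the inductive step, suppose $M=M^{(r)}$ agrees with a $\Lambda$-canonical matrix on $M_1,\dots,M_{q-1}$, that $\Lambda_{q-1}$ is reduced with standard partition the $(q-1)$-strips and equivalence classes the linking classes. Then I apply the algorithm of Section~\ref{sec3} to the pair $(M,\Lambda_{q-1})$: the blocks of $M$ under the $(q-1)$-strip partition are ordered as in (\ref{5}), and the first nonstable one, call it $B$, is the next block to be reduced. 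Condition (a) says $M_q$ is the intersection of two $(q-1)$-strips — that is, $M_q$ \emph{is} a block of the $(q-1)$-strip partition; I must check it is in fact the \emph{first nonstable} such block. The blocks strictly before $M_q$ are, by construction of boxes (Definition~\ref{d2.0}(i)--(ii)), exactly the earlier boxes $M_1,\dots,M_{q-1}$ together with stable blocks; so a block before $M_q$ is stable iff it is not one of $M_1,\dots,M_{q-1}$, and since $M'$ agrees with $M$ on those, $M_q$ is indeed the first nonstable block. Now I match the trichotomy: Case~I of the algorithm is exactly the situation in condition (b) where an $M'$ with $M'_q\ne M_q$ exists, and the algorithm sets $M'_q=0$, which in the structured form is the marked $\emptyset$; so (b) is forced. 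If no such $M'$ exists we are in Case~II or~III, distinguished by whether $l\sim r$ where $M_q$ sits in the $(l,r)$ block — and by the inductive hypothesis $l\sim r$ holds iff the two $(q-1)$-strips through $M_q$ are linked. In Case~III (linked) $M_q$ is chosen to be a Weyr matrix; in Case~II (not linked) $M_q=\matr{0}{I}{0}{0}$. That is precisely condition (c).

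To close the induction I must verify that $\Lambda_q$ — obtained from $\Lambda_{q-1}$ by the recipe at the end of Section~\ref{sec3} (add the equation (\ref{7}) in Case~I; refine the blocks $S_{ll},S_{rr}$ and add $P_1=Q_3$-type equalities in Case~II; set $S_{ll}=S_{rr}$ commuting with a Weyr matrix in Case~III) — is again a reduced algebra whose standard partition is the $q$-strips and whose equivalence classes are the linking classes of $q$-strips. This is a direct comparison of the block-refinement prescribed by the algorithm with the subdivision prescribed in Definition~\ref{d2.1}: a new subdivision running along a boundary of a cell $I$ or $\lambda I$ of $M_q$, together with the perpendicular subdivision making the cell $\mathrm{diag}(I,I)$ or $\mathrm{diag}(\lambda I,\lambda I)$, is exactly the refinement needed so that the commutation conditions from Theorem~\ref{t2'.1} (for the Weyr part) or the equation $S_{ll}\matr{0}{I}{0}{0}=\matr{0}{I}{0}{0}S_{rr}$ (in Case~II) become equalities of whole blocks of the form $S_{ab}=S_{a'b'}$; and the new linking relation (ii), ``their intersection is a new cell $I$ from $M_1,\dots,M_q$,'' is exactly the new equality $S_{ab}=S_{a'b'}$ imposed, so its transitive closure with (i) is the equivalence relation of $\Lambda_q$. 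Both directions of the biconditional follow: if $M$ satisfies (a)--(c) then by induction every box is reduced exactly as the algorithm would reduce it, so $M=M^\infty$ is structured $\Lambda$-canonical with boxes $M_1,\dots,M_m$; conversely a structured $\Lambda$-canonical matrix satisfies (a)--(c) by the same step-by-step identification, using Theorem~\ref{t1.1} to know the output is well defined.

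The main obstacle I anticipate is the bookkeeping in the inductive step that identifies the algorithm's block-refinement of $\Lambda^{(l)}$ with the $q$-strip subdivision of Definition~\ref{d2.1} — in particular handling the case where a single box $M_q$ is a Weyr matrix with several diagonal cells $\lambda I$ of different sizes and several off-diagonal cells $I$, so that the induced refinement and the induced new linking relations interact nontrivially with earlier boxes (Definition~\ref{d2.0}(ii)); keeping the indices straight so that ``linked'' really does coincide with ``$\sim$ in $\Lambda_q$'' at every stage is the delicate point, but it is forced by the construction of the Weyr form in Theorem~\ref{t2'.1} and no new idea is needed beyond careful matching.
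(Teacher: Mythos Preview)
Your proposal is correct and follows exactly the approach the paper takes: the paper's own proof is the single sentence ``This theorem follows immediately from the algorithm of Section~\ref{sec3},'' and your inductive argument is precisely the unpacking of that sentence, matching (a)--(c) to the successive steps of the algorithm and identifying the $(q{-}1)$-strip partition and linking relation with the block structure and equivalence relation of $\Lambda_{q-1}$.
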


\begin{proof}
This theorem follows immediately from the
algorithm of Section \ref{sec3}.
\end{proof}


\section{Linear matrix problems}      \label{sss2}

\subsection{Introduction}           \label{sec2.1}

In Section \ref{sss2} we study a large class of
matrix problems. In the theory of representations
of finite dimensional algebras, similar classes
of matrix problems are given by vectorspace
categories \cite{rin,sim}, bocses \cite{roi,cra},
modules over aggregates \cite{gab_roi, gab_vos},
or vectroids \cite{bel_ser}.

Let us define the considered class of matrix
problems (in terms of elementary transformations
to simplify its use; a more formal definition
will be given in Section \ref{22simil}). Let
$\sim$ be an equivalence relation in
$T=\{1,\dots,t\}$. We say that a $t\times t$
matrix $A=[a_{ij}]$ {\it links an equivalence
class ${\cal I}\in T/\!\sim\ $ to an equivalence
class} ${\cal J}\in T/\!\sim\ $  if $a_{ij}\ne 0$
implies $(i,j)\in {\cal I}\times {\cal J}$.
Clearly, if $A$ links ${\cal I}$ to ${\cal J}$
and $A'$ links ${\cal I'}$ to ${\cal J'}$, then
$AA'$ links ${\cal I}$ to ${\cal J'}$ when ${\cal
J}={\cal I'}$, and $AA'=0$ when ${\cal J}\ne
{\cal I'}$.\footnote{Linking matrices behave as
mappings; one may use vector spaces $V_{\cal I}$
instead of equivalence classes ${\cal I}$ ($\dim
V_{\cal I}=\#(\cal I)$) and linear mappings of
the corresponding vector spaces instead of
linking matrices.} We also say that a sequence of
nonnegative integers
$\underline{n}=(n_1,n_2,\dots,n_t)$ is a {\it
step-sequence} if $i\sim j$ implies $n_i=n_j$.

Let $A=[a_{ij}]$ link ${\cal I}$ to ${\cal J}$,
let $\underline{n}$ be a step-sequence, and let
$(l,r)\in \{1,\dots,n_i\}\times\{1,\dots,n_j\}$
for $(i,j)\in {\cal I}\times{\cal J}$ (since
$\underline{n}$ is a step-sequence, $n_i$ and
$n_j$ do not depend on the choice of $(i,j)$);
denote by $A^{[l,r]}$ the
$\underline{n}\times\underline{n}$  matrix that
is obtained from $A$ by replacing each entry
$a_{ij}$ with the following $n_i\times n_j$ block
$A^{[l,r]}_{ij}$: if $a_{ij}=0$ then $
A^{[l,r]}_{ij}=0$, and if $a_{ij}\ne 0$ then the
$(l,r)$ entry of $A^{[l,r]}_{ij}$ is $a_{ij}$ and
the others are zeros.

Let a triple
\begin{equation}       \label{0.1}
(T/\!\sim,\ \{P_i\}_{i=1}^p,\ \{V_j\}_{j=1}^q)
\end{equation}
consist of the set of equivalence classes of
$T=\{1,\dots,t\}$, a finite or empty set of
linking nilpotent upper-triangular matrices
$P_i\in k^{t\times t}$, and a finite set of
linking matrices $V_j\in k^{t\times t}$. Denote
by $\cal P$ the product closure of
$\{P_i\}_{i=1}^p$ and by $\cal V$ the closure of
$\{V_j\}_{j=1}^q$ with respect to multiplication
by $\cal P$ (i.e., $\cal{VP}\subset\cal{V}$ and
$\cal{PV}\subset\cal{V}$). Since $P_i$ are
nilpotent upper-triangular $t\times t$ matrices,
$P_{i_1}P_{i_2}\dots P_{i_t}=0$ for all
$i_1,\dots,i_t$. Hence, ${\cal P}$ and ${\cal V}$
are finite sets consisting of linking nilpotent
upper-triangular matrices and, respectively,
linking matrices:
\begin{equation}       \label{0.2}
{\cal P}= \{P_{i_1}P_{i_2}\dots P_{i_r}\,|\,r\le
t\},\quad {\cal V}= \{PV_jP'\,|\,P,P'\in\{I_t\}
\cup{\cal P},\ 1\le j\le q \}.
\end{equation}
For every step-sequence
$\underline{n}=(n_1,\ldots,n_t)$, we denote by
${\cal M}_{\underline{n}\times\underline{n}}$ the
vector space generated by all
${\underline{n}\times\underline{n}}$ matrices of
the form $V^{[l,r]},\ 0\ne V\in\cal V$.

\begin{definition}    \label{d0.1}
A {\it linear matrix problem} given by a triple
\eqref{0.1} is the canonical form problem for
$\underline{n}\times\underline{n}$ matrices
$M=[M_{ij}]\in{\cal
M}_{\underline{n}\times\underline{n}}$ with
respect to sequences of the following
transformations:

\begin{itemize}
\item[(i)]
For each equivalence class ${\cal I}\in
T/\!\sim$, the same elementary transformations
within all the vertical strips $M_{\bullet, i},\
i\in{\cal I}$, then the inverse transformations
within the horizontal strips $M_{i, \bullet},\
i\in{\cal I}$.

\item[(ii)]
For $a\in k$ and a nonzero matrix $P=[p_{ij}]\in
\cal P$ linking $\cal I$ to $\cal J$, the
transformation $M\mapsto
(I+aP^{[l,r]})^{-1}M(I+aP^{[l,r]})$; that is, the
addition of $ap_{ij}$ times the $l$th column of
the strip $M_{\bullet,i}$ to the $r$th column of
the  strip $M_{\bullet,j}$ simultaneously for all
$(i,j)\in {\cal I} \times{\cal J}$, then the
inverse transformations with rows of $M$.
\end{itemize}
 \end{definition}

\begin{example}       \label{e0.2}
As follows from Example \ref{e0.1}, the problem
of classifying representations of the quiver
\eqref{0.01} may be given by the triple $$
(\{\{1\},\{2\},\{3,4\}\},\ \varnothing,\
\{e_{11},e_{21},e_{31},e_{41}, e_{42}, e_{43}\}),
$$ where $e_{ij}$ denotes the matrix in which the
$(i,j)$ entry is 1 and the others are 0. The
problem of classifying representations of each
quiver may be given in the same manner.

\end{example}

\begin{example}                      \label{e0.3}
Let ${\cal S}=\{p_1,\dots,p_n\}$ be a finite
partially ordered set whose elements are indexed
such that $p_i<p_j$ implies $i<j$. Its {\it
representation} is a matrix $M$ partitioned into
$n$ vertical strips $M_1,\dots,M_n$; we allow
arbitrary row-transformations, arbitrary
column-transformations within each vertical
strip, and additions of linear combinations of
columns of $M_i$ to a column of $M_j$ if  $p_i <
p_j$. (This notion is important for
representation theory and  was introduced by
Nazarova and Roiter \cite{naz_roi}, see also
\cite{gab_roi} and \cite{sim}.) The problem of
classifying representations of the poset ${\cal
S}$ may be given by the triple $$
(\{\{1\},\{2\},\dots,\{n+1\}\}, \{e_{ij}\,|\, p_i
< p_j\}, \{e_{n+1,1}, e_{n+1,2},\dots,
e_{n+1,n}\}). $$
 \end{example}

\begin{example}
Let us consider  Wasow's canonical form problem
for an analytic at the point $\varepsilon=0$
matrix
\begin{equation}       \label{0.3}
A(\varepsilon)=A_0+\varepsilon A_1 +
\varepsilon^2 A_2+\cdots,\quad A_i\in {\mathbb
C}^{n\times n},
\end{equation}
relative to analytic similarity:
\begin{equation}       \label{0.4}
A(\varepsilon)\mapsto B(\varepsilon):=
S(\varepsilon)^{-1} A(\varepsilon)
S(\varepsilon),
\end{equation}
where $S(\varepsilon)=S_0+\varepsilon S_1+\cdots$
and $S(\varepsilon)^{-1}$ are analytic matrices
at $0$. Let us restrict ourselves to the
canonical form problem for the first $t$ matrices
$A_0, A_1,\dots, A_{t-1}$ in the expansion
\eqref{0.3}. By \eqref{0.4},
$S(\varepsilon)B(\varepsilon)=  A(\varepsilon)
S(\varepsilon)$, that is $S_0B_0=A_0S_0,\dots,
S_0B_{t-1}+ S_1B_{t-2}+ \dots + S_{t-1}B_0=
A_0S_{t-1}+ A_1S_{t-2}+ \dots + A_{t-1}S_0,$ or
in the matrix form
\begin{multline*}
\begin{bmatrix}
S_0&S_1&\cdots&S_{t-1}\\&S_0&\ddots&\vdots\\
&&\ddots&S_1\\{\text{\LARGE 0}}&&&S_0
\end{bmatrix}
\begin{bmatrix}
B_0&B_1&\cdots&B_{t-1}\\&B_0&\ddots&\vdots\\
&&\ddots&B_1\\{\text{\LARGE 0}}&&&B_0
\end{bmatrix}
=\\
\begin{bmatrix}
A_0&A_1&\cdots&A_{t-1}\\&A_0&\ddots&\vdots\\
&&\ddots&A_1\\{\text{\LARGE 0}}&&&A_0
\end{bmatrix}
\begin{bmatrix}
S_0&S_1&\cdots&S_{t-1}\\&S_0&\ddots&\vdots\\
&&\ddots&S_1\\{\text{\LARGE 0}}&&&S_0
\end{bmatrix}.
\end{multline*}
Hence this problem may be given by the following
triple of one-element sets: $$ (\{T\},\ \{J_t\},\
\{I_t\}), $$ where $J_t= e_{12}+e_{23}+\dots
+e_{t-1,t}$ is the nilpotent Jordan block. Then
all elements of $T=\{1,2,\dots,t\}$ are
equivalent, ${\cal P}=\{ J_t, J_t^2,\dots,
J_t^{t-1}\}$ and ${\cal V}=\{ I_t, J_t, \dots,
J_t^{t-1}\}$. This problem is wild even if $t=2$,
see \cite{fri1, ser1}. I am thankful to S.
Friedland for this example.
\end{example}

In Section \ref{22simil} we give a definition of
the linear matrix problems in a form, which is
more similar to Gabriel and Roiter's definition
(see Example \ref{e0.1a}) and is better suited
for Belitski\u\i's algorithm.

In Section \ref{s3.3} we prove that every
canonical matrix may be decomposed into a direct
sum of indecomposable canonical matrices by
permutations of its rows and columns. We also
investigate the canonical form problem for upper
triangular matrices under upper triangular
similarity (see \cite{thi}).

In Section \ref{s3.3'} we consider a canonical
matrix as a parametric matrix whose parameters
are eigenvalues of its Jordan blocks. It enables
us to describe a set of canonical matrices having
the same structure.

In Section \ref{s3.4} we consider linear matrix
problems that give matrix problems with
independent row and column transformations and
prove that the problem of classifying modules
over a finite-dimensional algebra may be reduced
to such a matrix problem. The reduction is a
modification of Drozd's reduction of the problem
of classifying modules over an algebra to the
problem of classifying representations of bocses
\cite{dro1} (see also Crawley-Boevey \cite{cra}).
Another reduction of the problem of classifying
modules over an algebra to a matrix problem with
arbitrary row transformations was given in
\cite{gab_vos}.


\subsection{Linear matrix problems and
$\Lambda$-similarity} \label{22simil}

In this section we give another definition of the
linear matrix problems, which is equivalent to
the Definition \ref{d0.1} but is often more
convenient. The set of admissible transformations
will be formulated in terms of
$\Lambda$-similarity; it simplifies the use of
Belitski\u\i's algorithm.

\begin{definition}    \label{d3.1}
An algebra $\varGamma \subset k^{t\times t}$ of
upper triangular matrices will be called a {\it
basic matrix algebra} if $$
\begin{bmatrix}
                       a_{11}&\cdots&a_{1t} \\
                       &\ddots&\vdots \\
                       \text{\Large 0} & & a_{tt}
            \end{bmatrix}\in\varGamma
\quad  {\rm implies} \quad
           \begin{bmatrix}
                       a_{11}  & & \text{\Large 0} \\
                       &\ddots&   \\
                       \text{\Large 0}  & & a_{tt}
            \end{bmatrix}\in\varGamma.
$$
\end{definition}

\begin{lemma} \label{l3.2}
(a) Let $\varGamma\subset k^{t\times t}$ be a
basic matrix algebra, $\cal D$ be the set of its
diagonal matrices, and $\cal R$ be the set of its
matrices with zero diagonal. Then there exists a
basis $E_1,\dots,E_r$ of $\cal D$ over $k$ such
that all entries of its matrices are 0 and 1,
moreover
\begin{equation}       \label{3.3}
E_1+\dots+E_r=I_t,\ \ E_{\alpha}E_{\beta}=0\
({\alpha} \ne {\beta}),\ \ E_{\alpha}^2=
E_{\alpha}.
\end{equation}
These equations imply the following decomposition
of $\varGamma$ (as a vector space over $k$) into
a direct sum of subspaces:
\begin{equation}       \label{3.3a}
\varGamma={\cal D}\oplus{\cal R}=
\Bigl(\bigoplus_{{\alpha}=
1}^rkE_{\alpha}\Bigr)\oplus \Bigl(\bigoplus_{
{\alpha},{\beta}=1}^rE_{\alpha}{\cal
R}E_{\beta}\Bigr).
\end{equation}

(b) The set of basic $t\times t$ algebras is the
set of reduced ${\underline{1}\times
\underline{1}}$ algebras, where
$\underline{1}:=(1,1,\dots,1)$. A basic $t\times
t$ algebra $\varGamma$ is the reduced
${\underline{1}\times \underline{1}}$ algebra
given by
\begin{itemize}
  \item $T/\!\sim \,=\{{\cal I}_1,\dots, {\cal I}_r\}$
where ${\cal I}_{\alpha}$ is the set of indices
defined by $E_{\alpha}=\sum_{i\in{\cal
I}_{\alpha}} e_{ii},$ see \eqref{3.3}, and
  \item a family of systems of the form \eqref{00}
such that for every $\alpha,\beta\in
\{1,\dots,r\}$ the solutions of its $({\cal
I}_{\alpha}, {\cal I}_{\beta})$ system form the
space $E_{\alpha}{\cal R}E_{\beta}$.
\end{itemize}
\end{lemma}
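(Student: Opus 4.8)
The plan is to treat part (a) as essentially the Wedderburn--Malcev / idempotent-lifting picture specialised to the case where $\varGamma$ already sits inside the upper-triangular matrices and is closed under "taking the diagonal part''. First I would observe that the map $\pi\colon\varGamma\to k^{t\times t}$ sending an upper-triangular matrix to its diagonal is, by the defining property of a basic matrix algebra, a well-defined $k$-algebra homomorphism onto a subalgebra $\cal D\subseteq\varGamma$ of diagonal matrices, with kernel exactly $\cal R$; hence $\varGamma={\cal D}\oplus{\cal R}$ as vector spaces, which is the first half of \eqref{3.3a}, and $\cal R$ is a two-sided ideal (indeed it is the radical, being a nilpotent ideal with semisimple quotient $\cal D$). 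Next, $\cal D$ is a subalgebra of the diagonal matrices, i.e.\ a subalgebra of $k^t$ with coordinatewise operations; such a subalgebra is spanned by its primitive idempotents, and each idempotent of $k^t$ is a $0/1$ diagonal matrix. This gives the orthogonal decomposition $I_t=E_1+\dots+E_r$ with $E_\alpha E_\beta=\delta_{\alpha\beta}E_\alpha$ and $E_\alpha^2=E_\alpha$, i.e.\ \eqref{3.3}. Finally, decomposing $\cal R$ as $\bigoplus_{\alpha,\beta}E_\alpha{\cal R}E_\beta$ is the standard Peirce decomposition relative to this complete system of orthogonal idempotents, and it completes \eqref{3.3a}.

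For part (b) I would argue both inclusions. Given a reduced $\underline 1\times\underline 1$ algebra in the sense of Definition \ref{d1.1} (with all $n_i=1$), its members are upper-triangular by \eqref{1}, and condition \eqref{2} together with the homogeneity of the system \eqref{3} (the equations \eqref{3} have zero constant term, so setting all off-diagonal $S_{ij}=0$ still gives a solution) shows that the pure diagonal matrix obtained from any $S\in\varGamma$ by deleting its off-diagonal entries again lies in $\varGamma$; hence every reduced $\underline 1\times\underline 1$ algebra is a basic matrix algebra. Conversely, given a basic matrix algebra $\varGamma$, I would use part (a): define the equivalence relation on $T=\{1,\dots,t\}$ by $i\sim j$ iff $i$ and $j$ lie in the same block ${\cal I}_\alpha$, where $E_\alpha=\sum_{i\in{\cal I}_\alpha}e_{ii}$; then the diagonal condition \eqref{2} for this $\sim$ says exactly that the diagonal part lies in $\cal D=\bigoplus_\alpha kE_\alpha$. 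It remains to realise the off-diagonal constraints: for each ordered pair $(\alpha,\beta)$ the subspace $E_\alpha{\cal R}E_\beta$ is a linear subspace of the span of the matrix units $\{e_{ij}\mid i\in{\cal I}_\alpha,\ j\in{\cal I}_\beta\}$, and any linear subspace is the solution set of a finite homogeneous linear system in the coordinates $x_{ij}=$ the $(i,j)$-entry; I take that system to be the $({\cal I}_\alpha,{\cal I}_\beta)$-system \eqref{00}. By part (a) the general element of $\varGamma$ is (diagonal in $\cal D$) $+$ (element of each $E_\alpha{\cal R}E_\beta$), which is precisely the description \eqref{1}--\eqref{3} of the reduced $\underline 1\times\underline 1$ algebra so defined; this gives the reverse inclusion and the explicit form claimed in (b).

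The only genuinely substantive point, and the step I would be most careful about, is verifying that the diagonal-projection $\pi$ is \emph{multiplicative}, because a priori "$\varGamma$ is closed under passing to the diagonal part'' is a purely linear statement (Definition \ref{d3.1}), whereas I need it to respect products. Here the upper-triangularity does the work: for upper-triangular $A,B$ the diagonal of $AB$ depends only on the diagonals of $A$ and $B$ (it is their coordinatewise product), so $\pi(AB)=\pi(A)\pi(B)$ automatically on $k^{t\times t}_{\text{upper}}$, and the basic-algebra hypothesis is exactly what guarantees $\pi(\varGamma)\subseteq\varGamma$ so that $\cal D$ really is a subalgebra of $\varGamma$ rather than merely of $k^{t\times t}$. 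Once that is in hand, everything else is the routine Peirce/idempotent bookkeeping sketched above; I would also remark that $\cal R$ being nilpotent follows from its matrices being strictly upper triangular, so $\cal R=\rad\varGamma$, which is the natural way to see that the decomposition \eqref{3.3a} matches the radical decomposition used elsewhere in the paper.
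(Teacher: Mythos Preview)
Your proof is correct. For part (a) you take a somewhat different route to the idempotents $E_\alpha$ than the paper does. The paper argues directly and elementarily: given $D\in\cal D$ with distinct nonzero diagonal values $a_1,\dots,a_l$, it writes $D=a_1F_1+\dots+a_lF_l$ where the $F_i$ are the $0/1$ diagonal ``level-set'' matrices, and then uses a Vandermonde determinant to show each $F_i$ is a polynomial in $D$, hence lies in $\cal D$; from this the orthogonal $0/1$ basis drops out. You instead observe that $\cal D$ is a unital subalgebra of the diagonal algebra $k^t$, hence a commutative reduced finite-dimensional $k$-algebra, hence spanned by its primitive idempotents, which as idempotents of $k^t$ are automatically $0/1$ diagonal matrices. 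Both arguments are short and standard; the paper's buys a completely self-contained linear-algebra argument with no appeal to structure theory, while yours is more conceptual and makes the Wedderburn--Malcev picture explicit. Your careful check that the diagonal projection $\pi$ is multiplicative (via upper-triangularity) is a point the paper leaves implicit, and your treatment of part (b) spells out what the paper omits entirely---the paper's written proof covers only (a).
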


\begin{proof}
(a) By Definition \ref{d3.1}, $\varGamma$ is the
direct sum of vector spaces ${\cal D}$ and ${\cal
R}$. Denote by $\cal F$ the set of diagonal
${t\times t}$  matrices with entries in
$\{0,1\}$. Let $D\in \cal D$, then
$D=a_1F_1+\dots +a_lF_l$, where $a_1,\dots, a_l$
are distinct nonzero elements of $k$ and
$F_1,\dots, F_l$ are such matrices from $\cal F$
that $F_iF_j=0$ whenever $i\ne j$. The vectors
$(a_1,\dots, a_l)$, $(a_1^2,\dots, a_l^2),\dots,
(a_1^l,\dots, a_l^l)$ are linearly independent
(they form a Vandermonde determinant), hence
there exist $b_1,\dots, b_l\in k$ such that
$F_1=b_1D+ b_2D^2 +\dots + b_lD^l\in \cal D$,
analogously $F_2,\dots, F_l\in \cal D$. It
follows that ${\cal D}=kE_1\oplus \dots \oplus
kE_r$, where $E_1,\dots,E_r\in\cal F$ and satisfy
\eqref{3.3}. Therefore, ${\cal R}=(E_1+ \dots
+E_r){ \cal R}(E_1+ \dots +E_r)=
\bigoplus_{{\alpha},{\beta}}E_{\alpha} {\cal
R}E_{\beta}$, we get the decomposition
\eqref{3.3a}. (Note that \eqref{3.3} is a
decomposition of the identity of $\varGamma$ into
a sum of minimal orthogonal idempotents and
\eqref{3.3a} is the Peirce decomposition of
$\varGamma$, see \cite{dr_ki}.)
\end{proof}

\begin{definition}    \label{d3.2}
A {\it linear matrix problem given by a pair}
\begin{equation}       \label{3.4}
(\varGamma,\cal M), \quad \varGamma {\cal
M}\subset {\cal M},\ {\cal M}\varGamma \subset
{\cal M},
\end{equation}
consisting of a basic $t\times t$ algebra
$\varGamma$ and a vector space ${\cal M}\subset
k^{t\times t}$, is the canonical form problem for
matrices $M\in {\cal
M}_{\underline{n}\times\underline{n}}$ with
respect to $\varGamma_{\underline{n}
\times\underline{n}}$-similarity transformations
$$ M\mapsto S^{-1}MS,\quad S\in
\varGamma_{\underline{n}\times\underline{n}}^*,
$$ where
$\varGamma_{\underline{n}\times\underline{n}}$
and ${\cal M}_{\underline{n}\times\underline{n}}$
consist of $\underline{n}\times\underline{n}$
matrices whose blocks satisfy the same linear
relations as the entries of all $t\times t$
matrices from $\varGamma$ and $\cal M$
respectively.

More exactly,
$\varGamma_{\underline{n}\times\underline{n}}$ is
the reduced ${\underline{n}\times\underline{n}}$
algebra given by the same system (\ref{00}) and
$T/\!\sim\;=\{{\cal I}_1,\dots, {\cal I}_r\}$ as
$\varGamma$ (see Lemma \ref{l3.2}(b)).%
\footnote{%
If $n_1 >0,\dots,n_t>0$, then  $\varGamma
_{\underline{n}\times\underline{n}}$ is Morita
equivalent to $\varGamma$; moreover, $\varGamma$
is the basic algebra for
$\varGamma_{\underline{n}\times\underline{n}}$ in
terms of the theory of algebras, see
\cite{dr_ki}.

} Next,
\begin{equation}       \label{3.4a}
{\cal M}=\Bigl(\sum_{\alpha=1}^rE_{\alpha}
\Bigr){\cal M} \Bigl(\sum_{\beta=1}^rE_{\beta}
\Bigr)= \bigoplus_{\alpha,\beta=1}^r
E_{\alpha}{\cal M}E_{\beta}
\end{equation}
(see \eqref{3.3}), hence there is a system of
linear equations
\begin{equation}       \label{3.5}
\sum_{(i,j)\in{\cal I}_{\alpha} \times{\cal
I}_{\beta}} d_{ij}^{(l)}x_{ij} = 0, \quad 1 \le l
\le p_{\alpha\beta}, \quad {\cal
I}_{\alpha},{\cal I}_{\beta}\in T/\!\sim,
\end{equation}
such that $\cal M$ consists of all matrices
$[m_{ij}]_{i,j=1}^t$ whose entries satisfy the
system \eqref{3.5}. Then ${\cal
M}_{\underline{n}\times\underline{n}}$
($\underline{n}$ is a step-sequence) denotes the
vector space of all
${\underline{n}\times\underline{n}}$ matrices
$[M_{ij}]_{i,j=1}^t$ whose blocks satisfy the
system \eqref{3.5}: $$ \sum_{(i,j)\in{\cal
I}_{\alpha} \times{\cal
I}_{\beta}}d_{ij}^{(l)}M_{ij} = 0,\quad 1 \le
l\le p_{\alpha\beta}, \quad {\cal
I}_{\alpha},{\cal I}_{\beta}\in T/\!\sim . $$
\end{definition}

\begin{theorem}
Definitions \ref{d0.1} and \ref{d3.2} determine
the same class of matrix problems:

(a) The linear matrix problem given by a triple
$(T/\!\sim,\ \{P_i\}_{i=1}^p,\ \{V_j\}_{j=1}^q)$
may be also given by the pair $(\varGamma,{ \cal
M})$, where $\varGamma$ is the basic matrix
algebra generated by $P_1,\dots, P_p$ and all
matrices $E_{\cal I}=\sum_{j\in {\cal I}} e_{jj}$
$({\cal I}\in T/\!\sim)$ and $\cal M$ is the
minimal vector space of matrices containing
$V_1,\dots,V_q$ and closed with respect to
multiplication by $P_1,\dots, P_p$.

(b) The linear matrix problem given by a  pair
$(\varGamma,{ \cal M})$ may be also given by a
triple $(T/\!\sim,\ \{P_i\}_{i=1}^p,\
\{V_j\}_{j=1}^q)$, where $T/\!\sim \,=\{{\cal
I}_1,\dots, {\cal I}_r\}$ (see Lemma
\ref{l3.2}(b)), $\{P_i\}_{i=1}^p$ is the union of
bases for the spaces $E_{\alpha}{\cal
R}E_{\beta}$ (see (\ref{3.3a})), and
$\{V_j\}_{j=1}^q$ is the union of bases for the
spaces $E_{\alpha}{\cal M}E_{\beta}$ (see
(\ref{3.4a})).
\end{theorem}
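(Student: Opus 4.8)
The plan is to prove the two halves (a) and (b) by tracking, through the dictionaries already established in the excerpt, how the data of a triple $(T/\!\sim,\{P_i\},\{V_j\})$ and the data of a pair $(\varGamma,\cal M)$ encode exactly the same admissible transformations and the same ambient space of matrices. The conceptual content is that both definitions are just two presentations of a $\Lambda$-similarity problem with $\Lambda=\varGamma_{\underline n\times\underline n}$ a reduced matrix algebra, so nothing new has to be proved — one only has to verify that the recipe in each direction reconstructs the same $\Lambda$ and the same $\cal M_{\underline n\times\underline n}$.

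For part (a), I would start from a triple and form $\varGamma$, the basic matrix algebra generated by the $P_i$ together with the idempotents $E_{\cal I}=\sum_{j\in\cal I}e_{jj}$. First I would check this is indeed a basic matrix algebra in the sense of Definition \ref{d3.1}: it is an algebra of upper-triangular matrices because each $P_i$ is upper-triangular and the $E_{\cal I}$ are diagonal, and it is basic because it contains all the $E_{\cal I}$, whose sum is $I_t$, so for any element its diagonal part (a $k$-combination of the $E_{\cal I}$, since multiplication by $E_{\cal I}$'s extracts diagonal entries compatible with $\sim$) again lies in $\varGamma$. By Lemma \ref{l3.2}(b), $\varGamma$ is then the reduced $\underline 1\times\underline 1$ algebra with equivalence relation $\sim$ (the minimal idempotents $E_\alpha$ are exactly the $E_{\cal I}$, because the $P_i$ are nilpotent and cannot merge idempotents) and with off-diagonal space $\cal R$ equal to the span of $\cal P\setminus\{0\}$. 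Passing to step-sequences, $\varGamma_{\underline n\times\underline n}$-similarity is generated by transformations of types (i) and (ii) of Definition \ref{d0.1}: type (i) comes from the block-diagonal part $\mathcal D_{\underline n\times\underline n}$ (arbitrary invertible $S_{ii}$, equal on a $\sim$-class), and type (ii) from the unipotent part $I+aP^{[l,r]}$ with $P\in\cal P$. Finally $\cal M_{\underline n\times\underline n}$, being spanned by the $V^{[l,r]}$ with $0\ne V\in\cal V$ and $\cal V$ closed under multiplication by $\cal P$, is exactly $\varGamma\cal M\varGamma$-invariant, i.e. it satisfies $\varGamma_{\underline n\times\underline n}\cal M\subset\cal M$ and $\cal M\varGamma_{\underline n\times\underline n}\subset\cal M$; so $(\varGamma,\cal M)$ gives the same problem.

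For part (b), I would run the construction backwards using Lemma \ref{l3.2}. Given $(\varGamma,\cal M)$, Lemma \ref{l3.2}(a) supplies the minimal orthogonal idempotents $E_1,\dots,E_r$ and the Peirce decompositions $\varGamma=\mathcal D\oplus\mathcal R=\bigoplus kE_\alpha\oplus\bigoplus E_\alpha\mathcal RE_\beta$ and $\cal M=\bigoplus E_\alpha\cal ME_\beta$. Set $T/\!\sim=\{\cal I_1,\dots,\cal I_r\}$ with $E_\alpha=\sum_{j\in\cal I_\alpha}e_{jj}$, let $\{P_i\}$ be a union of $k$-bases of the $E_\alpha\mathcal RE_\beta$ (each such basis element is a linking nilpotent upper-triangular matrix, being a combination of matrix units $e_{ij}$ with $i<j$, $i\in\cal I_\alpha$, $j\in\cal I_\beta$), and let $\{V_j\}$ be a union of $k$-bases of the $E_\alpha\cal ME_\beta$. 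Then the product closure $\cal P$ of $\{P_i\}$ spans $\mathcal R$ as a vector space (products land in $\mathcal R^{\,\cdot}$ which together with $\mathcal D$ and the $E_\alpha$'s regenerates $\varGamma$), and the closure $\cal V$ of $\{V_j\}$ under multiplication by $\cal P$ spans $\cal M$ (using $\varGamma\cal M\varGamma\subset\cal M$); hence $\varGamma$ is recovered as the basic algebra generated by $\{P_i\}\cup\{E_\alpha\}$ and $\cal M_{\underline n\times\underline n}$ as the span of the $V^{[l,r]}$, so the triple gives the same problem. The only genuine verification that needs care — the step I expect to be the main obstacle — is matching the off-diagonal linear relations: one must confirm that the system \eqref{3.5} cutting out $\cal M$ (equivalently the system \eqref{00} cutting out $\mathcal R$) is satisfied by a block matrix $[M_{ij}]$ precisely when its blocks satisfy the ``same'' relations the entries do, i.e. that spanning $E_\alpha\mathcal RE_\beta$ by the chosen $P_i$ is equivalent, after replacing entries by $(l,r)$-blocks, to the reduced-algebra description; but this is exactly the content of Lemma \ref{l3.2}(b) and the definition of $V^{[l,r]}$, so it is bookkeeping rather than a new idea. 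I would close by remarking that in both directions the resulting $\Lambda$-similarity problem is literally the one associated with the reduced algebra $\varGamma_{\underline n\times\underline n}$, so Definitions \ref{d0.1} and \ref{d3.2} are interchangeable.
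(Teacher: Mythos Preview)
Your outline is correct in spirit and matches the paper's approach, but you have misidentified where the real work lies. The step you flag as ``the main obstacle'' --- matching the off-diagonal linear relations cutting out $\cal M$ and $\cal R$ --- is indeed bookkeeping, and the paper does not even comment on it. The substantive point is the one you pass over in a single clause: that $\varGamma_{\underline n\times\underline n}$-similarity is \emph{generated} by the elementary transformations of types (i) and (ii). One direction is trivial (each elementary matrix lies in $\Lambda^*$), but the converse requires showing that every invertible $S\in\Lambda=\varGamma_{\underline n\times\underline n}$ factors as a product of matrices of the form $I+aE_{\cal I}^{[l,r]}$ and $I+bP^{[l,r]}$. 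You assert this without argument.

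The paper's proof of (a) is devoted entirely to this factorization. First one uses type-(i) transformations to reduce the diagonal blocks of $S$ to identities, so that $S=I+\sum_{Q\in\cal Q}a_QQ$ with $\cal Q=\{P^{[l,r]}:P\in\cal P\}$. Then one exploits the nilpotency filtration $\cal Q_l=\{Q\in\cal Q:Q^l=0\}$: multiplying $S$ by $\prod_{Q\in\cal Q}(I-a_QQ)$ pushes the off-diagonal part into $\cal Q_{t-1}$ (since $\cal Q$ is product-closed and each product of two $Q$'s has smaller nilpotency degree), and iterating drives $S$ to $I$. This is not deep, but it is the content of the theorem, and your write-up should make the argument explicit rather than treat it as self-evident. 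Part (b) is, as you say, immediate from Lemma~\ref{l3.2} and the Peirce decomposition; the paper does not prove it separately.
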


\begin{proof}
(a) Let $\underline{n}$ be a step-sequence. We
first prove that the set of admissible
transformations is the same for both the matrix
problems; that is, there exists a  sequence of
transformations (i)--(ii) from Definition
\ref{d0.1} transforming $M$ to $N$ (then we write
$M \simeq N$) if and only if they are
$\Lambda$-similar with $\Lambda:=
\varGamma_{\underline{n}\times \underline{n}}$.

By Definition \ref{d0.1}, $M \simeq N$ if and
only if $S^{-1}MS=N$, where $S$ is a product of
matrices of the form
\begin{equation}       \label{3.2}
I+aE^{[l,r]}_{\cal I} \ (a\ne -1\ \text{if}\
l=r),\ \ I+bP^{[l,r]},
\end{equation}
where $a,b\in k$, ${\cal I}\in T/\!\sim$ and
$0\ne P\in {\cal P}$. Since $S\in\Lambda$, $M
\simeq N$ implies $M \sim_{\Lambda}N$.

Let $M \sim_{\Lambda}N$, that is $SMS^{-1}=N$ for
a nonsingular $S\in\Lambda$. To prove $M \simeq
N$, we must expand $S^{-1}$ into factors of the
form \eqref{3.2}; it suffices to reduce $S$ to
$I$ multiplying by matrices \eqref{3.2}. The
matrix $S$ has the form \eqref{1} with $S_{ii} =
S_{jj}$ whenever $i \sim j$; we reduce $S$ to the
form \eqref{1} with $S_{ii} =I_{n_i}$ for all $i$
multiplying by matrices $I+aE^{[l,r]}_{\cal I}$.
Denote by $\cal Q$ the set of all
$\underline{n}\times \underline{n}$ matrices of
the form $P^{[l,r]},$ $P\in {\cal P}$. Since
${\cal Q}\cup \{ E^{[l,r]}_{\cal I}\}_{{\cal
I}\in T/\!\sim}$ is product closed, it generates
$\Lambda$ as a vector space. Therefore,  $S=
I+\sum_{Q\in \cal Q}a_QQ\ (a_Q\in k)$. Put ${\cal
Q}_l=\{Q\in{\cal Q}\,|\, Q^l=0\},$ then ${\cal
Q}_0=\varnothing$ and ${\cal Q}_t={\cal Q}$.
Multiplying $S$ by $\prod_{Q\in{\cal
Q}}(I-a_QQ)=I-\sum_{Q\in \cal Q}a_QQ+\cdots$, we
make $S=I+\cdots$, where the points denote a
linear combination of products of matrices from
$\cal Q$ and each product consists of at least 2
matrices (so its degree of nilpotency is at most
$t-1$). Each product is contained in ${\cal
Q}_{t-1}$ since $\cal Q$ is product closed, hence
$S= I+\sum_{Q\in {\cal Q}_{t-1}}b_QQ$. In the
same way we get $S= I+\sum_{Q\in {\cal
Q}_{t-2}}c_QQ$, and so on until obtain $S= I$.

Clearly, the set of reduced $\underline{n}\times
\underline{n}$ matrices ${\cal
M}_{\underline{n}\times \underline{n}}$ is the
same for both the matrix problems.
\end{proof}

Hereafter we shall use only Definition \ref{d3.2}
of linear matrix problems.


\subsection{Krull--Schmidt theorem}
\label{s3.3} In this section we study
decompositions of a canonical matrix into a
direct sum of indecomposable canonical matrices.

Let a linear matrix problem be given by a pair
$(\varGamma,{ \cal M})$. By the {\it canonical
matrices} is meant the
$\varGamma_{\underline{n}\times
\underline{n}}$-canonical matrices $M\in{\cal
M}_{\underline{n} \times\underline{n}}$ for
step-sequences $\underline{n}$. We say that
$\underline{n}\times \underline{n}$ matrices $M$
and $N$ are {\it equivalent} and write $M \simeq
N$ if they are $\varGamma_{\underline{n}
\times\underline{n}}$-similar. The {\it
block-direct sum} of an
$\underline{m}\times\underline{m}$ matrix
$M=[M_{ij}]_{i,j=1}^t$ and an
$\underline{n}\times\underline{n}$ matrix
$N=[N_{ij}]_{i,j=1}^t$ is the
$(\underline{m}+\underline{n})\times(\underline{m}+
\underline{n})$ matrix $$ M\uplus N=[M_{ij}\oplus
N_{ij}]_{i,j=1}^t. $$ A matrix $M\in{\cal
M}_{\underline{n} \times\underline{n}}$ is said
to be {\it indecomposable} if $\underline{n}\neq
0$ and $M\simeq M_1\uplus M_2$ implies that $M_1$
or $M_2$ has size  $0\times 0$.

\begin{theorem}          \label{t3.1}
For every canonical
$\underline{n}\times\underline{n}$ matrix $M$,
there exists a permutation matrix $P\in
\varGamma_{\underline{n}\times\underline{n}}$
such that
\begin{equation}          \label{9a}
P^{-1}MP= \underbrace{M_1\uplus\dots\uplus
M_1}_{\mbox{$q_1$ copies}} \uplus\dots\uplus
\underbrace{M_l\uplus\dots\uplus
M_l}_{\mbox{$q_l$ copies}}
\end{equation}
where $M_i$ are distinct indecomposable canonical
matrices. The decomposition (\ref{9a}) is
determined by $M$ uniquely up to permutation of
summands. \end{theorem}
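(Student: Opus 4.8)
The plan is to prove Theorem \ref{t3.1} in two stages: first the existence of a block-direct-sum decomposition into indecomposables via permutation of rows and columns, then the uniqueness (the Krull--Schmidt part). For existence, I would argue by induction on the total size $n_1+\dots+n_t$ of the step-sequence $\underline n$. If $M$ is itself indecomposable there is nothing to do; otherwise, by definition there is a permutation matrix $P\in\varGamma_{\underline n\times\underline n}$ with $P^{-1}MP=M_1'\uplus M_2'$, where $M_1',M_2'$ have strictly smaller step-sequences. The key point to check here is that $P^{-1}MP$ is again a canonical matrix in its (smaller) space, i.e.\ that each summand $M_i'$ lies in $\mathcal M_{\underline n^{(i)}\times\underline n^{(i)}}$ and is $\varGamma_{\underline n^{(i)}\times\underline n^{(i)}}$-canonical. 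This follows because a canonical matrix is $\Lambda$-similar only to itself, and the block-direct-sum structure of both $\varGamma_{\underline n\times\underline n}$ and $\mathcal M_{\underline n\times\underline n}$ is compatible with the $\uplus$-decomposition coming from a permutation of strips; so each $M_i'$ is the $\Lambda^{(i)}$-canonical form of itself. Applying the induction hypothesis to $M_1'$ and $M_2'$ and composing the permutation matrices gives \eqref{9a}; collecting equal summands produces the stated form. The only subtlety is verifying that the composite conjugating matrix can be taken to be a permutation matrix inside $\varGamma_{\underline n\times\underline n}$, which it can, since permutation matrices that permute strips belonging to the same equivalence class (and re-order whole strip-blocks) preserve the defining linear relations \eqref{2}--\eqref{3} when applied consistently.

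For uniqueness, the natural route is to invoke the Krull--Schmidt theorem in the standard module-theoretic form, so the real work is to set up the correct category. I would observe that the matrices $M\in\mathcal M_{\underline n\times\underline n}$ with the $\varGamma_{\underline n\times\underline n}$-similarity action are the objects of an additive category in which $\uplus$ is the biproduct and the morphisms $M\to N$ are the matrices $S$ (of the appropriate rectangular block shape, respecting the $\varGamma$-relations) with $SM=NS$ on the relevant entries; this is an additive Krull--Schmidt category because morphism spaces are finite-dimensional over $k$ and endomorphism rings of indecomposables are local (the endomorphism ring of an indecomposable object is a finite-dimensional $k$-algebra whose non-units form an ideal, since $k$ is algebraically closed and any idempotent would split off a summand). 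Once this is in place, Theorem \ref{t3.1}'s uniqueness is exactly the classical Krull--Schmidt--Azumaya statement: the multiset of isomorphism classes of indecomposable summands is an invariant of $M$. Finally I would translate ``isomorphic'' back to ``equal'' for canonical matrices: two canonical matrices that are isomorphic in this category are $\Lambda$-similar, hence by Theorem \ref{t1.1} have the same canonical form, hence are equal; so the distinct $M_i$ appearing in \eqref{9a} are determined by $M$ up to permutation, which is the assertion.

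I expect the main obstacle to be the bookkeeping needed to make the ``category of the matrix problem'' precise and to confirm that it really is Krull--Schmidt in the required technical sense---in particular, that the endomorphism ring of an indecomposable canonical matrix is local. The delicate step is showing that an idempotent endomorphism $e$ of $M$ (with $e\in\varGamma_{\underline n\times\underline n}$, $eM=Me$ on the reduced entries) forces a block-direct-sum splitting of $M$ realized by a permutation; this uses that $\varGamma_{\underline n\times\underline n}$ is a reduced algebra, so any idempotent is conjugate (inside the algebra) to a sum of standard strip-idempotents, together with the fact that the canonical form is uniquely attained. An alternative, more self-contained route that avoids citing Krull--Schmidt is to prove cancellation directly: if $M_1\uplus A\simeq M_1\uplus B$ with $M_1$ indecomposable then $A\simeq B$, using that a morphism $M_1\to M_1$ is either an isomorphism or nilpotent; iterating cancellation yields uniqueness. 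I would lean toward the categorical phrasing for brevity but would fall back on explicit cancellation if the local-endomorphism-ring claim needs a longer argument than anticipated.
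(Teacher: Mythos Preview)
Your existence argument has a real gap at the very start of the induction. When $M$ is decomposable, the definition only gives you $M\simeq M_1'\uplus M_2'$ via some $S\in\varGamma_{\underline n\times\underline n}^{*}$, \emph{not} via a permutation matrix. You write ``by definition there is a permutation matrix $P\in\varGamma_{\underline n\times\underline n}$ with $P^{-1}MP=M_1'\uplus M_2'$'', but that is precisely the statement to be proved, not the hypothesis. Knowing that the canonical $M$ is $\Lambda$-similar only to itself tells you that $(M_1'\uplus M_2')^{\infty}=M$, but it does not produce a permutation exhibiting $M$ as a block-direct sum; nor is it clear a priori that the canonical form of a $\uplus$-sum is the $\uplus$ of the canonical forms. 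So the induction never gets off the ground.

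The paper closes this gap by a different, constructive route: it looks at the terminal algebra $\Lambda^{(p)}=\{S\in\Lambda\mid SM=MS\}$ produced by Belitski\u\i's algorithm, which is itself a reduced $\underline m\times\underline m$ algebra with an equivalence relation $\approx$ on its index set. Because $M$ is canonical, every $\underline m\times\underline m$ subblock $N_{\alpha\beta}$ of $M$ is scalar (or zero when $\alpha\not\approx\beta$). Gathering, by an explicit permutation of substrips, all subblocks indexed by a single $\approx$-class into one corner of each $\underline n\times\underline n$ block yields a genuine $\uplus$-splitting $A_1\uplus\cdots\uplus A_l$, one summand per $\approx$-class, and each $A_i$ is visibly $M_i\uplus\cdots\uplus M_i$ because its subblocks are equal scalar matrices. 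The permutation lies in $\varGamma_{\underline n\times\underline n}$ because linked substrips are permuted identically. Uniqueness then comes for free from the construction, since the $\approx$-classes are determined by $M$. Your Krull--Schmidt route for uniqueness is fine (and the paper itself remarks on this categorical viewpoint in the Corollary), but it does not rescue the existence step; for that you need the structural information about $\Lambda^{\infty}$.
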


\begin{proof}
Let $M$ be a canonical $\underline{n}\times
\underline{n}$ matrix. The repeated application
of Belitski\u\i's algorithm produces the sequence
\eqref{10a}: $(M, \Lambda), \: (M',
\Lambda'),\dots, \: (M^{(p)}, \Lambda^{(p)}),$
where $\Lambda=
\varGamma_{\underline{n}\times\underline{n}}$ and
$\Lambda^{(p)}= \{S\in \Lambda\, |\, MS =SM\}$
(see \eqref{10b}) are reduced
$\underline{n}\times\underline{n}$ and
$\underline{m}\times\underline{m}$ algebras; by
Definition \ref{d1.1}(a) $\Lambda$ and
$\Lambda^{(p)}$ determine equivalence relations
$\sim$ in $T=\{1,\dots,t\}$ and $\approx$ in
$T^{(p)}=\{1,\dots,r\}$. Since $M$ is canonical,
$M^{(i)}$ differs from $M^{(i+1)}$ only by
additional subdivisions. The strips with respect
to the $\underline{m}\times\underline{m}$
partition will be called the {\it substrips}.

Denote by $\Lambda^{(p)}_0$ the subalgebra of
$\Lambda^{(p)}$ consisting of its block-diagonal
$\underline{m}\times\underline{m}$ matrices, and
let $S\in\Lambda^{(p)}_0$. Then it has the form
$$
S=C_1\oplus\dots\oplus C_r, \quad
C_{\alpha}=C_{\beta} \text{ if } {\alpha}\approx
{\beta}.
$$
It may be also considered as a
block-diagonal $\underline{n}\times\underline{n}$
matrix $S=S_1\oplus\dots\oplus S_t$ from
$\Lambda$ (since $\Lambda^{(p)} \subset\Lambda$);
each block $S_i$ is a direct sum of subblocks
$C_{\alpha}$.

Let ${\cal I}$ be an equivalence class from
$T^{(p)}/\!\approx$. In each $S_i$, we permute
its subblocks $C_{\alpha}$ with ${\alpha}\in \cal
I$ into the first subblocks: $$ {\bar
S}_i=C_{\alpha_1}\oplus\dots\oplus
C_{\alpha_p}\oplus C_{\beta_1}\oplus\dots\oplus
C_{\beta_q},\quad \alpha_1<\dots< \alpha_p,\ \
\beta_1<\dots< \beta_q, $$ where $\alpha_1,\dots,
\alpha_p \in {\cal I}$ and $\beta_1,\dots,
\beta_q \notin {\cal I}$ (note that
$C_{\alpha_1}=\dots= C_{\alpha_p}$); it gives the
matrix ${\bar S}=Q^{-1}SQ$, where
$Q=Q_1\oplus\dots\oplus Q_t$ and $Q_i$ are
permutation matrices. Let $i\sim j$, then
$S_i=S_j$ (for all $S\in\Lambda$), hence the
permutations within $S_i$ and $S_j$ are the same.
We have $Q_i=Q_j$ if $i\sim j$, therefore
$Q\in\Lambda$.

Making the same permutations of
substrips within each strip of
$M$, we get ${\bar M}=Q^{-1}MQ$.
Let $M=[M_{ij}]_{i,j=1}^t$
relatively to the
$\underline{n}\times\underline{n}$
partition, and let $M=[N_{
\alpha\beta}]_{\alpha,\beta
=1}^r$ relatively to the
$\underline{m}\times\underline{m}$
partition. Since $M$ is
canonical, all $N_{\alpha\beta}$
are reduced, hence
$N_{\alpha\beta}=0$ if $\alpha
\not\approx\beta$ and
$N_{\alpha\beta}$ is a scalar
square matrix if $\alpha
\approx\beta$. The $\bar M$ is
obtained from $M$ by gathering
all subblocks $N_{
\alpha\beta}$, $(\alpha,\beta)
\in {\cal I}\times{\cal I}$, in
the left upper cover of every
block $M_{ij}$, hence ${\bar
M}_{ij}= A_{ij}\oplus B_{ij}$,
where $A_{ij}$ consists of
subblocks $N_{ \alpha\beta}$,
$\alpha,\beta \in {\cal I}$, and
$B_{ij}$ consists of subblocks
$N_{ \alpha\beta}$,
$\alpha,\beta \notin {\cal I}$.
We have ${\bar M}= A_1\uplus B$,
where $A_1=[A_{ij}]$ and
$B=[B_{ij}]$. Next apply the
same procedure to $B$; continue
the process until get $$
P^{-1}MP=A_1\uplus\dots\uplus
A_l, $$ where $P\in\Lambda$ is a
permutation matrix and the
summands $A_i$ correspond to the
equivalence classes of
$T^{(p)}/\!\approx$.

The matrix $A_1$ is canonical. Indeed, $M$ is a
canonical matrix, by Definition \ref{d2.0}, each
box $X$ of $M$ has the form $\emptyset$,
$\matr{0}{I}{0}{0}$, or a Weyr matrix. It may be
proved that the part of $X$ at the intersection
of substrips with indices in $\cal I$ has the
same form and this part is a box of $A_1$.
Furthermore, the matrix $A_1$ consists of
subblocks $N_{ \alpha\beta}$, $(\alpha,\beta) \in
{\cal I}\times{\cal I}$, that are scalar matrices
of the same size $t_1\times t_1$. Hence, $A_1=
M_1\uplus\dots\uplus M_1$ ($t_1$ times), where
$M_1$ is canonical. Analogously, $A_i=
M_i\uplus\dots\uplus M_i$ for all $i$ and the
matrices $M_i$ are canonical.
 \end{proof}

\begin{corollary}[Krull--Schmidt theorem]
For every matrix $M\in{\cal M}_{\underline{n}
\times\underline{n}}$, there exists its
decomposition $$ M\simeq M_1\uplus\dots\uplus M_r
$$ into a block-direct sum of indecomposable
matrices $M_i\in{\cal M}_{ \underline{n}_i
\times\underline{n}_i}$. Moreover, if $$ M\simeq
N_1\uplus\dots\uplus N_s $$ is another
decomposition into a block-direct sum of
indecomposable matrices, then $r=s$ and, after a
suitable reindexing, $M_1\simeq N_1,\dots,
M_r\simeq N_r$.
\end{corollary}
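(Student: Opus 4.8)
The plan is to push everything down to the canonical case and then invoke Theorem \ref{t3.1}. For existence, given $M\in{\cal M}_{\underline n\times\underline n}$, Theorem \ref{t1.1} gives $M\simeq M^{\infty}$, and Theorem \ref{t3.1} gives a permutation matrix $P\in\varGamma_{\underline n\times\underline n}$ with $P^{-1}M^{\infty}P=M_1\uplus\dots\uplus M_1\uplus\dots\uplus M_l\uplus\dots\uplus M_l$ ($q_i$ copies of the distinct indecomposable canonical matrix $M_i$). Since $P\in\varGamma_{\underline n\times\underline n}$ this is an equivalence, so reading off the $M_i$ with their multiplicities yields $M\simeq M_1\uplus\dots\uplus M_r$ with each summand indecomposable; that each summand lies in some ${\cal M}_{\underline n_i\times\underline n_i}$ for a step-sequence $\underline n_i$ is immediate, because the defining system \eqref{3.5} of $\cal M$ is linear and a direct sum of matrices vanishes iff each summand does, so a block-direct sum lies in ${\cal M}$ iff every summand does.

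For uniqueness, suppose $M\simeq M_1\uplus\dots\uplus M_r\simeq N_1\uplus\dots\uplus N_s$ with all $M_j,N_k$ indecomposable. First I would replace each $M_j$ by its canonical form $M_j^{\infty}$ and each $N_k$ by $N_k^{\infty}$. This is legitimate on two counts: block-direct sum respects equivalence, since if $S_j\in\varGamma_{\underline n_j\times\underline n_j}^{*}$ conjugates $M_j$ to $M_j^{\infty}$ then $S_1\uplus\dots\uplus S_r$ lies in $\varGamma_{(\underline n_1+\dots+\underline n_r)\times(\underline n_1+\dots+\underline n_r)}^{*}$ (a reduced algebra is cut out by linear relations among blocks, hence is closed under block-diagonal direct sums) and conjugates $M_1\uplus\dots\uplus M_r$ to $M_1^{\infty}\uplus\dots\uplus M_r^{\infty}$; and the canonical form of an indecomposable matrix is again indecomposable, because $M_j^{\infty}\simeq A\uplus B$ with $A,B$ both nonzero would give $M_j\simeq A\uplus B$, contradicting indecomposability. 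So we may assume every $M_j$ and every $N_k$ is an \emph{indecomposable canonical} matrix, with $M_1\uplus\dots\uplus M_r\simeq M\simeq N_1\uplus\dots\uplus N_s$ still.

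Now both $M_1\uplus\dots\uplus M_r$ and $N_1\uplus\dots\uplus N_s$ have the same canonical form $C:=M^{\infty}$ (canonical forms are unique, Theorem \ref{t1.1}), and Theorem \ref{t3.1} presents $C$, up to a permutation inside $\varGamma$, as a block-direct sum of $q_i$ copies of distinct indecomposable canonical matrices $D_i$, a decomposition determined by $C$ up to reordering. It then suffices to show that the multiset $\{M_1,\dots,M_r\}$ — taken up to equality, since the $M_j$ are canonical — coincides with $\{D_i$ repeated $q_i$ times$\}$, and likewise for the $N_k$: both multisets then equal the same thing, giving $r=s=\sum q_i$ and the asserted matching. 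The crux, and the step I expect to be the main obstacle, is the claim that a block-direct sum of indecomposable canonical matrices is itself a canonical matrix up to a permutation lying in $\varGamma$; granting it, $M_1\uplus\dots\uplus M_r$ equals $C$ up to such a permutation, and the uniqueness clause of Theorem \ref{t3.1} forces its summands to be precisely the $D_i$ with multiplicities $q_i$. To prove the claim I would rerun Belitski\u\i's algorithm of Section \ref{sec3} on $X:=M_1\uplus\dots\uplus M_r$: each block $X_{ij}$ is a direct sum of already reduced blocks of the $M_j$, so no Case II or III reduction is forced beyond refining $X$ along the standard partitions of the summands, and the only genuinely new freedom is the off-diagonal linking between distinct copies of one and the same indecomposable summand — which is governed by exactly the equivalence relation $\approx$ appearing in the proof of Theorem \ref{t3.1}, so the process ends with a permutation of $C$ carrying the asserted block-direct decomposition. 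A self-contained alternative, should this analysis prove unwieldy, is to establish instead that the endomorphism ring of an indecomposable $M_j$ is local (by a Fitting-lemma argument: a finite-dimensional endomorphism is nilpotent or invertible) and then invoke the Krull--Schmidt--Azumaya theorem; I would keep this route in reserve.
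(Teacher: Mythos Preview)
Your approach is essentially the paper's own: it too says the corollary ``follows from Theorems \ref{t1.1} and \ref{t3.1}'' and then remarks that it is a special case of the Krull--Schmidt theorem for additive categories (citing Bass), which is exactly your reserve route via local endomorphism rings. The paper does not spell out the step you call the crux---that a block-direct sum of indecomposable canonical matrices is again canonical up to a permutation in $\varGamma$---so your more detailed plan to verify it by rerunning the algorithm on $X=M_1\uplus\dots\uplus M_r$ is a genuine addition rather than a divergence; the mechanism you describe (the only new freedom being the off-diagonal linking between identical summands, governed by the relation $\approx$ from the proof of Theorem \ref{t3.1}) is exactly how that proof works in reverse, so this should go through.
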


\begin{proof}
This statement follows from Theorems \ref{t1.1}
and \ref{t3.1}. Note that this statement is a
partial case of the Krull--Schmidt theorem
\cite{bas} for additive categories; namely, for
the category of matrices $\cup\,{\cal
M}_{\underline{n} \times\underline{n}}$ (the
union over all step-sequences ${\underline{n}}$)
whose morphisms from $M\in{\cal M}_{\underline{m}
\times\underline{m}}$ to $N\in{\cal
M}_{\underline{n} \times\underline{n}}$ are the
matrices $S\in{\cal M}_{\underline{m}
\times\underline{n}}$ such that $MS=SN$. (The set
${\cal M}_{\underline{m} \times\underline{n}}$ of
${\underline{m} \times\underline{n}}$ matrices is
defined like ${\cal M}_{\underline{n}
\times\underline{n}}$.)
\end{proof}

\begin{example}
Let us consider the {\it canonical form problem
for upper triangular matrices under upper
triangular similarity} (see \cite{thi} and the
references given there). The set $\varGamma^t$ of
all upper triangular $t\times t$ matrices is a
reduced $\underline{1} \times\underline{1}$
algebra, so every $A\in\varGamma^t$ is reduced to
the $\varGamma^t$-canonical form $A^{\infty}$ by
Belitski\u\i's algorithm; moreover, in this case
the algorithm is very simplified: All diagonal
entries of $A=[a_{ij}]$ are not changed by
transformations; the over-diagonal entries are
reduced starting with the last but one row: $$
a_{t-1,t};\ a_{t-2,t-1},\ a_{t-2,t};\
a_{t-3,t-2},\, a_{t-3,t-1},\, a_{t-3,t};\ldots\,
. $$ Let $a_{pq}$ be the first that changes by
admissible transformations. If there is a nonzero
admissible addition, we make $a_{pq}=0$;
otherwise $a_{pq}$ is reduced by transformations
of equivalence or similarity, in the first case
me make $a_{pq}\in \{0,1\}$, in the second case
$a_{pq}$ is not changed. Then we restrict the set
of admissible transformations to those that
preserve the reduced $a_{pq}$, and so on. Note
that this reduction is possible for an arbitrary
field $k$, which does not need to be
algebraically closed.

Furthermore, $\varGamma^t$ is a basic $t\times t$
algebra, so we may consider $A^{\infty}$ as a
canonical matrix for the linear matrix problem
given by the pair $(\varGamma^t,\varGamma^t)$. By
Theorem \ref{t3.1} and since a permutation
$t\times t$ matrix $P$ belongs to $\varGamma^t$
only if $P=I$, there exists a unique
decomposition
$$
A^{\infty}=A_1\uplus\dots\uplus A_r
$$
where each $A_i$ is an indecomposable canonical
$\underline{n}_i\times \underline{n}_i$ matrix,
$\underline{n}_i\in \{0,1\}^t$. Let $t_i\times
t_i$ be the size of $A_i$, then
$\varGamma^t_{\underline{n}_i\times
\underline{n}_i}$  may be identified with
$\varGamma^{t_i}$ and $A_i$ may be considered as
a $\varGamma^{t_i}$-canonical matrix.

Let $A^{\infty}=[a_{ij}]_{i,j=1}^t$, define the
graph $G_A$ with vertices $1,\dots,t$ having the
edge $i$---$j$ $(i<j)$ if and only if both
$a_{ij}=1$ and $a_{ij}$ was reduced by
equivalence transformations. Then $G_A$ is a
union of trees; moreover, $G_A$ is a tree if and
only if $A^{\infty}$ is indecomposable (compare
with \cite{ser2}).

The Krull--Schmidt theorem for this case and a
description of nonequivalent indecomposable
$t\times t$ matrices for $t\le 6$ was given by
Thijsse \cite{thi}.
\end{example}


\subsection{Parametric canonical matrices}
 \label{s3.3'}

Let a linear matrix problem be given by a pair
$(\varGamma,\cal M)$. The set ${\cal M}$ may be
presented as the matrix space of all solutions
$[m_{ij}]_{i,j=1}^t$ of the system \eqref{3.5} in
which the unknowns $x_{ij}$ are disposed like the
blocks \eqref{5}: $x_{t1}\prec
x_{t2}\prec\cdots$. The Gauss-Jordan elimination
procedure to the system \eqref{3.5} starting with
the last unknown reduces the system to the form
\begin{equation}       \label{3'.1}
x_{lr}=\sum_{(i,j)\in {\cal
N}_f}c_{ij}^{(l,r)}x_{ij}, \quad (l,r)\in {\cal
N}_d,
\end{equation}
where ${\cal N}_d$ and ${\cal N}_f$ are such that
${\cal N}_d\cup{\cal N}_f= \{1,\dots,t\} \times
\{1,\dots,t\}$ and ${\cal N}_d\cap{\cal N}_f=
\varnothing$; the inequality $c_{ij}^{(l,r)}\ne
0$ implies $i\sim l$, $j\sim r$ and $x_{ij}\prec
x_{lr}$ (i.e., every unknown $x_{lr}$ with
$(l,r)\in {\cal N}_d\cap ({\cal I}\times{\cal
J})$ is a linear combination of the preceding
unknowns with indices in ${\cal N}_f\cap ({\cal
I}\times{\cal J})$).

A block $M_{ij}$ of $M\in{\cal M}_{\underline{n}
\times \underline{n}}$ will be called {\it free}
if $(i,j)\in{\cal N}_f$, {\it dependent} if
$(i,j)\in{\cal N}_d$. A box $M_i$ will be called
{\it free} ({\it dependent}) if it is a part of a
free (dependent) block.

\begin{lemma}  \label{l3.1'}
The vector space ${\cal M}_{\underline{n}\times
\underline{n}}$ consists of all
$\underline{n}\times\underline{n}$ matrices
$[M_{ij}]_{i,j=1}^t$ whose free blocks are
arbitrary and the dependent blocks are their
linear combinations given by (\ref{3'.1}):
\begin{equation}       \label{3'.1'}
M_{lr}=\sum_{(i,j)\in {\cal
N}_f}c_{ij}^{(l,r)}M_{ij}, \quad (l,r)\in {\cal
N}_d.
\end{equation}
On each step of Belitski\u\i's algorithm, the
reduced subblock of $M\in{\cal
M}_{\underline{n}\times \underline{n}}$ belongs
to a free block (i.e., all boxes $M_{q_1},
M_{q_2},\dots$ from Definition \ref{d2.0} are
subblocks of free blocks).
\end{lemma}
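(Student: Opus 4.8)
The plan is to prove the two assertions separately; the first is routine linear algebra done blockwise, the second is where the real content lies.

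For the first assertion I would carry out the Gauss--Jordan reduction of the text entrywise. Since $\underline n$ is a step-sequence, within each block ${\cal I}_\alpha\times{\cal I}_\beta$ of the equivalence partition all the blocks $M_{ij}$ have one and the same format, so the system of block equations cutting ${\cal M}_{\underline n\times\underline n}$ out of the space of all $\underline n\times\underline n$ matrices is just $n_i\times n_j$ independent copies of the scalar system \eqref{3.5}, one copy for each matrix-entry position. The reduced form \eqref{3'.1} of \eqref{3.5} (already recorded, with $c_{ij}^{(l,r)}\ne 0$ forcing $i\sim l$, $j\sim r$ and $x_{ij}\prec x_{lr}$, because \eqref{3.5} is a direct sum over the pairs $({\cal I}_\alpha,{\cal I}_\beta)$ and elimination is started from the last unknown) applied to each copy then gives exactly \eqref{3'.1'}: free blocks arbitrary, dependent blocks expressed through their predecessors. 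There is nothing more to this half.

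For the second assertion, fix $M\in{\cal M}_{\underline n\times\underline n}$, run Belitski\u\i's algorithm, and let $X=M_{q_{l+1}}$ be the block reduced at step $l$, i.e. the first block of $M^{(l)}$ that is nonstable under $\Lambda^{(l)}$-similarity. Since the $\underline n^{(l)}\times\underline n^{(l)}$ partition refines the $\underline n\times\underline n$ one, $X$ lies inside a unique coarse block $M_{ab}$, and I want $(a,b)\in{\cal N}_f$. Suppose instead $(a,b)\in{\cal N}_d$, so that $M_{ab}=\sum_{(i,j)\in{\cal N}_f}c_{ij}^{(a,b)}M_{ij}$ with every summand index satisfying $i\sim a$, $j\sim b$ and $(i,j)<(a,b)$. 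The first thing I would establish is that $\sim$-equivalent coarse strips are subdivided in lockstep at every stage of the algorithm: $i\sim a$ forces the $(i,i)$- and $(a,a)$-diagonal blocks of every $S\in\Lambda^{(m)}\subseteq\varGamma_{\underline n\times\underline n}$ to coincide, so each refinement step that subdivides one of them subdivides the other identically (this is precisely the mechanism behind the $q$-strips of Definition \ref{d2.1}). Granting this, for each $M_{ij}$ in the sum there is a well-defined block $M_{ij}|_X$ of the $\underline n^{(l)}\times\underline n^{(l)}$ partition occupying, inside $M_{ij}$, the position $X$ occupies inside $M_{ab}$, and restricting the identity $M_{ab}=\sum c_{ij}^{(a,b)}M_{ij}$ to that position gives $X=\sum c_{ij}^{(a,b)}M_{ij}|_X$.

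Next I would compare $M_{ij}|_X$ with $X$ in the ordering \eqref{5}. Two cases occur: if $i>a$ then $M_{ij}|_X$ lies in a lower horizontal substrip than $X$, hence precedes it; if $i=a$ (so $j<b$) then $M_{ij}|_X$ and $X$ share a horizontal substrip while $M_{ij}|_X$ sits in a strictly earlier vertical substrip, so again $M_{ij}|_X<X$. Thus every $M_{ij}|_X$ precedes $X$, and since $X$ is by definition the first nonstable block of $M^{(l)}$, each $M_{ij}|_X$ is stable under $\Lambda^{(l)}$-similarity. Finally, for any $S\in(\Lambda^{(l)})^{*}$ put $N=S^{-1}M^{(l)}S$; because $\Lambda^{(l)}\subseteq\varGamma_{\underline n\times\underline n}$ and ${\cal M}\varGamma,\varGamma{\cal M}\subseteq{\cal M}$, we have $N\in{\cal M}_{\underline n\times\underline n}$, so $N$ again satisfies \eqref{3'.1'} and moreover agrees with $M^{(l)}$ on each (stable) $M_{ij}|_X$. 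Reading \eqref{3'.1'} for $N$ at the position of $X$ then gives $N|_X=\sum c_{ij}^{(a,b)}M_{ij}|_X=X$, so $X$ is fixed by every $\Lambda^{(l)}$-similarity, i.e. $X$ is stable --- contradicting its choice. Hence $(a,b)\in{\cal N}_f$, which is the second assertion (and, since $X$ runs over the boxes $M_{q_1},M_{q_2},\dots$, the parenthetical remark).

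The main obstacle I anticipate is the bookkeeping behind the claim that $\sim$-equivalent strips are refined identically throughout, together with the ensuing ordering comparison $M_{ij}|_X<X$; this is careful but elementary, resting only on Definitions \ref{d3.2}, \ref{d2.0}, \ref{d2.1} and on the equality constraint on $\sim$-indexed diagonal blocks in $\varGamma_{\underline n\times\underline n}$. Everything else --- the blockwise linear algebra of the first part, the $\varGamma$-invariance of ${\cal M}_{\underline n\times\underline n}$, and the final substitution into \eqref{3'.1'} --- is immediate.
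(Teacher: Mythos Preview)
Your proof is correct and follows essentially the same approach as the paper's: the paper also argues by contradiction that if the first nonstable subblock $M^{(l)}_{\alpha\beta}$ lay in a dependent coarse block, it would be a linear combination of already-reduced (hence stable) subblocks of preceding free blocks, and would therefore itself be stable. The paper compresses this into two sentences and leaves implicit exactly the bookkeeping you single out---that $\sim$-equivalent strips are refined identically so that the restricted subblocks $M_{ij}|_X$ exist and precede $X$---so your version is a faithful expansion rather than a different argument.
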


\begin{proof}
Let us prove the second statement. On the $l$th
step of Belitski\u\i's algorithm, we reduce the
first nonstable block $M^{(l)}_{\alpha\beta}$ of
the matrix $M^{(l)}= [M^{(l)}_{ij}]$ with respect
to $\Lambda^{(l)}$-similarity. If
$M^{(l)}_{\alpha\beta}$ is a subblock of a
dependent block $M_{ij}$, then
$M^{(l)}_{\alpha\beta}$ is a linear combination
of already reduced subblocks of blocks preceding
to $M_{ij}$, hence $M^{(l)}_{\alpha\beta}$ is
stable, a contradiction.
\end{proof}

We now describe a set of canonical matrices
having `the same form'.

\begin{definition}    \label{d2.2}
Let $M$ be a structured (see Definition
\ref{d2.00}) canonical
$\underline{n}\times\underline{n}$ matrix, let
$M_{r_1} <\dots< M_{r_s}$ be those of its free
boxes that are Weyr matrices (Case III of
Belitski\u\i's algorithm), and let
$\lambda_{t_{i-1}+1} \prec \cdots \prec
\lambda_{t_i}$ be the distinct eigenvalues of
$M_{r_i}$. Considering some of $\lambda_i$ (resp.
all $\lambda_i$) as parameters, we obtain a
parametric matrix $M(\vec\lambda),\
\vec\lambda:=(\lambda_{i_1},\dots,
\lambda_{i_p})$ (resp. $\vec\lambda:=
(\lambda_1,\dots, \lambda_p),\ p:=t_s$), which
will be called a {\it semi-parametric} (resp.
{\it parametric}) {\it canonical matrix}. Its
{\it  domain of parameters} is the set of all
$\vec a\in k^p$ such that $M(\vec a)$ is a
structured canonical
$\underline{n}\times\underline{n}$ matrix with
the same disposition of the boxes $\emptyset $ as
in $M$.
\end{definition}

\begin{theorem}      \label{t2.2}
The domain of parameters $\cal D$ of a parametric
canonical $\underline{n}\times\underline{n}$
matrix $M(\vec\lambda)$ is given by a system of
equations and inequalities of the following three
types:

     (i) $f(\vec\lambda)=0$,

     (ii) $(d_1(\vec\lambda),\dots,
d_n(\vec\lambda))\ne(0,\dots,0)$,

     (iii)  $\lambda_i \prec \lambda_{i+1}$,

\noindent where $f,d_j \in k[x_1,\dots,x_p]$.
\end{theorem}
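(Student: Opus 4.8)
The strategy is to run Belitski\u\i's algorithm on the parametric matrix $M(\vec\lambda)$ symbolically, keeping the entries of the free Weyr boxes as indeterminates $\lambda_1,\dots,\lambda_p$, and to track exactly which conditions on $\vec a\in k^p$ are needed for the substitution $\vec\lambda\mapsto\vec a$ to produce again a structured canonical matrix with the same placement of the $\emptyset$-boxes. By Theorem \ref{t2.1} (applied with $\Lambda=\varGamma_{\underline n\times\underline n}$), a matrix with parts $M_1<\dots<M_m$, each of which is $\emptyset$, $\matr{0}{I}{0}{0}$, or a Weyr matrix, is a structured canonical matrix with boxes $M_1,\dots,M_m$ precisely when each $M_q$ satisfies the three local conditions (a)--(c) there. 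So $\vec a\in\cal D$ iff, after substitution, all three conditions survive for every box, \emph{and} the boxes that were $\emptyset$ stay $\emptyset$. The point is that each of these surviving requirements translates into a condition of one of the three listed types in the $a_i$'s, with the $f,d_j$ polynomials in $k[x_1,\dots,x_p]$.

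First I would handle condition (a). The partition of $M$ into $(q-1)$-strips (Definition \ref{d2.1}) is purely combinatorial: it is built from the positions of the cells $0,I,\lambda I$ inside $M_1,\dots,M_{q-1}$, which do not move when eigenvalues are specialized (a cell $\lambda_i I$ stays a scalar block for any value of $\lambda_i$). Hence condition (a) is automatically preserved and contributes nothing. Next, the "linked/not linked" alternative in condition (c) is likewise combinatorial --- it depends only on the relation $\sim$ and on which intersections are $I$-cells --- so the shape ($\matr{0}{I}{0}{0}$ versus Weyr) demanded of each non-$\emptyset$ box is fixed independently of $\vec a$; specializing a free Weyr box $M_{r_i}$ must still give a Weyr matrix, which forces $\lambda_{t_{i-1}+1}\prec\dots\prec\lambda_{t_i}$ (the ordering built into Definition \ref{d2'.1}) --- these are the type (iii) inequalities --- together with the bookkeeping conditions that the Weyr characteristic (the $m_{ij}$'s) is unchanged, which holds by construction since we only vary eigenvalues, not block sizes. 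The dependent Weyr boxes are, by \eqref{3'.1'}, linear combinations of the free blocks; one checks they are again scalar multiples of $I$ with eigenvalues determined by the $\lambda_i$, so they impose nothing new beyond what the free part already gives.

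The substantive content is condition (b): for each box $M_q$ one must express, as a condition on $\vec\lambda$, whether there exists $S\in\Lambda^*$ with $M'_1=M_1,\dots,M'_{q-1}=M_{q-1}$ but $M'_q\ne M_q$ --- i.e.\ whether there is a nonzero admissible addition to the $q$-th subblock from the earlier (already reduced) blocks. By \eqref{8'}--\eqref{10b}, after $q-1$ steps the relevant group is $\Lambda_{q-1}=\{S\in\Lambda\mid MS\equiv_{q-1}SM\}$, a reduced algebra, and "a nonzero admissible addition exists" is the statement that the system \eqref{6}--\eqref{7} for the block $M_q$ (whose coefficients are the entries of the already-reduced boxes, hence polynomials in $\vec\lambda$) admits a solution moving $M_q$. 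This is a linear-algebra condition on a parametrized matrix: "the rank of a certain matrix $N(\vec\lambda)$ exceeds a fixed number $\rho$," equivalently "some $(\rho{+}1)\times(\rho{+}1)$ minor of $N(\vec\lambda)$ is nonzero." For a box that is marked $\emptyset$, we are in Case I, so such an addition \emph{must} exist for $\vec a$ to remain admissible: this gives a type (ii) condition, namely that the vector of $(\rho{+}1)$-minors of $N(\vec a)$ is not all zero. For a box that is \emph{not} $\emptyset$ (Case II or III), no such addition may exist, which forces \emph{all} those minors to vanish: each is a type (i) equation $f(\vec a)=0$. One subtlety to dispatch carefully: the numerical rank $\rho$ itself could jump with $\vec\lambda$; but $\rho$ is determined by the reduced forms of the \emph{earlier} boxes, which --- by induction down the list $M_1<\dots<M_{q-1}$, already constrained by the type (i)--(iii) conditions coming before $M_q$ --- have fixed combinatorial shape on $\cal D$, so the relevant matrix sizes and the threshold $\rho$ are constant along $\cal D$.

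The induction is the organizing principle and also the main obstacle: one processes the boxes $M_1<\dots<M_m$ in order, and the claim to be proved at stage $q$ is that, assuming $\vec a$ satisfies the finitely many type (i)--(iii) conditions harvested from boxes $M_1,\dots,M_{q-1}$, the already-reduced matrix $M^{(q-1)}(\vec a)$ has exactly the same block/strip structure and reduced cell-shapes as $M^{(q-1)}(\vec\lambda)$, so that the coefficients entering the condition for $M_q$ are genuine specializations of the symbolic ones and the rank threshold is unchanged. Granting this, condition (b) for $M_q$ adds either one type (ii) condition (if $M_q=\emptyset$) or a batch of type (i) equations (otherwise), condition (c) adds the relevant type (iii) inequalities for the free Weyr boxes, and condition (a) adds nothing; collecting over all $q=1,\dots,m$ yields a finite system of the three stated types cutting out $\cal D$. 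The one place demanding real care is verifying that Belitski\u\i's algorithm, run symbolically, commutes with specialization on $\cal D$ --- i.e.\ that no "accidental" rank drop or extra reduction occurs for special $\vec a$ that was not already excluded by an earlier type (i)/(ii) condition --- and this is exactly what the inductive hypothesis, together with the minor-nonvanishing description of condition (b), is set up to guarantee.
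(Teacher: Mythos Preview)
Your proposal is correct and follows essentially the same approach as the paper: an induction over the boxes $M_1<\dots<M_m$, tracking the reduced algebra $\Lambda_{q-1}$ of transformations preserving the earlier boxes and expressing the Case I/Case II--III dichotomy for $M_q$ as a rank condition on a matrix whose entries are polynomials in $\vec\lambda$. The paper organizes this slightly differently---it carries along an explicit parametric system ${\cal S}_q(\vec\lambda)$ of equalities \eqref{2}--\eqref{3} defining $\Lambda_q(\vec a)$, with the inductive invariant that each $({\cal I},{\cal J})$ subsystem has linearly independent equations, and then reads off the type (i)/(ii) conditions directly as vanishing/nonvanishing of determinants of the augmented system $(\ref{3})\cup(\ref{7})$---but this is exactly your ``rank of $N(\vec\lambda)$ exceeds $\rho$'' condition, and your observation that $\rho$ stays constant along $\cal D$ is the counterpart of their linear-independence invariant.
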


\begin{proof}  Let $M_1 <\dots<M_m$ be all the
boxes of $M(\vec\lambda)$. Put ${\cal A}_0:=k^p$
and denote by ${\cal A}_q$ $(1\le q\le m)$ the
set of all $\vec a \in k^p$ such that $M(\vec a)$
coincides with $M(\vec a)^{\infty}$ on
$M_1,\dots,M_q$. Denote by $\Lambda_q(\vec a)\
(1\le q\le m,\ \vec a\in {\cal A}_q)$ the
subalgebra of $\Lambda:=
\varGamma_{\underline{n}\times\underline{n}}$
consisting of all $S\in \Lambda$ such that
$SM(\vec a)$ coincides with $M(\vec a)S$ on the
places of $M_1,\dots,M_q$.

We prove that there is a system ${\cal S}_q(\vec
\lambda)$ of equations of the form (\ref{2}) and
(\ref{3}) (in which every $c_{ij}^{(l)}$ is an
element of $k$ or a parameter $\lambda_i$ from
$M_1,\dots,M_q$) satisfying the following two
conditions for every $\vec \lambda=\vec{a}\in
{\cal A}_q$:

(a) the equations of each $({\cal I},\ {\cal }J)$
subsystem of (\ref{3}) are linearly independent,
and

(b) $\Lambda_q(\vec a)$ is a reduced
$\underline{n}_q\times \underline{n}_q$ algebra
given by ${\cal S}_q(\vec a)$.

\noindent This is obvious for $\Lambda_0(\vec
a):=\Lambda (\vec a)$. Let it hold for $q-1,$ we
prove it for $q$.

We may assume that $M_q$ is a free box since
otherwise ${\cal A}_{q-1}={\cal A}_q$ and
$\Lambda_q(\vec a)= \Lambda_{q-1}(\vec a)$ for
all $\vec{a}\in {\cal A}_{q-1}$. Let $(l,r)$ be
the indices of $M_q$ as a block of the
$\underline{n}_{q-1}\times \underline{n}_{q-1}$
matrix $M$ (i.e. $M_q=M_{lr}$). In accordance
with the algorithm of Section \ref{sec3}, we
consider two cases:

{\it Case 1:} $M_q = \emptyset .$ Then the
equality (\ref{7}) is not implied by the system
${\cal S}_{q-1}(\vec a)$ (more exactly, by its
$({\cal I},{\cal J})$ subsystem with ${\cal I}
\times {\cal J}\ni (l,r)$, see \eqref{3}) for all
$\vec a\in {\cal A}_q$. It means that there is a
nonzero determinant formed by columns of
coefficients of the system
$(\ref{3})\cup(\ref{7})$. Hence, ${\cal A}_q$
consists of all $\vec a\in {\cal A}_{q-1}$ that
satisfy the condition (ii), where
$d_1(\vec\lambda),\dots, d_n(\vec\lambda)$ are
all such determinants; we have ${\cal S}_q(\vec
\lambda) ={\cal S}_{q-1}(\vec \lambda)\cup
(\ref{7})$.

{\it Case 2:} $M_q\neq \emptyset .$ Then
(\ref{7}) is implied by the system ${\cal
S}_{q-1}(\vec a)$ for all $\vec a\in {\cal A}_q$.
Hence, ${\cal A}_q$ consists of all $\vec a\in
{\cal A}_{q-1}$ that satisfy the conditions
$d_1(\vec a)=0,\dots,d_n(\vec a)=0$ of the form
(i) and (if $M_q$ is a Weyr matrix with the
parameters
$\lambda_{t_{q-1}+1},\dots,\lambda_{t_q}$) the
conditions $\lambda_{t_{q-1}+1}\prec \cdots \prec
\lambda_{t_q}$ of the form (iii). The system
${\cal S}_{q}(\vec \lambda)$ is obtained from
${\cal S}_{q-1}(\vec \lambda)$  as follows: we
rewrite (\ref{2})--(\ref{3}) for smaller blocks
of $\Lambda_q$ (every system (\ref{3}) with
${\cal I}\ni l$ or ${\cal J}\ni r$ gives several
systems with the same coefficients, each of them
connects equally disposed subblocks of the blocks
$S_{ij}$ with $(i,j)\in \cal I\times\cal J$) and
add the equations needed for
$S_{ll}M_{lr}=M_{lr}S_{rr}$.

Since ${\cal A}_0=k^p,\ {\cal A}_q\ (1\le q\le
m)$ consists of all $\vec a\in {\cal A}_{q-1}$
that satisfy a certain system of conditions
(i)--(iii) and ${\cal D}:={\cal A}_m$ is the
domain of parameters of $M(\vec\lambda)$.
\end{proof}

\begin{example}           \label{e2.1}
The canonical pair of matrices from Example
\ref{e1.4} has the parametric form
        $$\left( \left[  \begin{tabular}{c|c}
                  $\!\!\!  \begin{array}{cc}
          \lambda_1 & 1  \\  0 &\lambda_1
          \end{array}$\!\!\! &{\LARGE 0}\\      \hline
       {\LARGE 0} & $\!\!\! \begin{array}{cc}
           \lambda_2 & 1\\  0 &\lambda_2
                    \end{array}\!\!\!$
                         \end{tabular}\right], \
              \left[  \begin{tabular}{c|c}
             $\!\!\! \begin{array}{cc}
               \mu_2 &  1 \\  0  & \mu_2
                  \end{array}\!\!\!$ &
           \!\!\!\!\! \begin{tabular}{c|c}
            $\mu_5$  &  $\emptyset$ \\ \hline
            $\mu_3$  & $\mu_4$
          \end{tabular}\!\!\! \\  \hline
              $\!\!\! \begin{array}{cc}
              \mu_1 & 0 \\   0  & \mu_1
                  \end{array}\!\!\! $
                               & $\emptyset$
            \end{tabular} \right]\right). $$
Its domain of parameters is given by the
conditions $\lambda_1 \prec \lambda_2, \
\mu_1\neq 0,\ \mu_3 =0$, and $\mu_4\neq\mu_5.$
\end{example}

\begin{remark} \label{r2.1}
The number of parametric canonical
$\underline{n}\times \underline{n}$ matrices is
finite for every $\underline{n}$ since there
exists a finite number of partitions into boxes,
and each box is $\emptyset $,
$\matr{0}{I}{0}{0},$ or a Weyr matrix (consisting
of 0, 1, and parameters). Therefore, a linear
matrix problem for matrices of size
$\underline{n}\times \underline{n}$ is reduced to
the problem of finding a finite set of parametric
canonical matrices and their domains of
parameters. Each domain of parameters is given by
a system of polynomial equations and inequalities
(of the types (i)--(iii)), so it is a
semi-algebraic set; moreover, it is locally
closed up to the conditions (iii).
\end{remark}


\subsection{Modules over finite-dimensional
algebras}     \label{s3.4}

In this section, we consider matrix problems with
independent row and column transformations (such
problems are called {\it separated} in
\cite{gab_roi}) and reduce to them the problem of
classifying modules over algebras.

\begin{lemma} \label{l3.4}
Let $\varGamma\subset k^{m\times m}$ and
$\Delta\subset k^{n\times n}$ be two basic matrix
algebras and let ${\cal N} \subset k^{m\times n}$
be a vector space such that $\varGamma{\cal N}
\subset {\cal N}$ and ${\cal N} \Delta \subset
{\cal N}$. Denote by $0\diagdown {\cal N}$ the
vector space of ${(m+n)\times (m+n)}$ matrices of
the form $\matr{0}{N}{0}{0}$, $N\in{\cal N}$.
Then the pair
\begin{equation*}      
 (\varGamma\oplus \Delta ,\ 0\diagdown {\cal N})
\end{equation*}
determines the canonical form problem for
matrices $N\in {\cal
N}_{\underline{m}\times\underline{n}}$ in which
the row transformations are given by $\varGamma$
and the column transformations are given by
$\Delta $: $$ N\mapsto CNS,\quad C\in
\varGamma_{\underline{m}\times \underline{m}}^*,\
S\in \Delta
_{\underline{n}\times\underline{n}}^*. $$
\end{lemma}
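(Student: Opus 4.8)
The plan is to check directly that $(\varGamma\oplus\Delta,\ 0\diagdown{\cal N})$ is a linear matrix problem in the sense of Definition~\ref{d3.2} and that the $\Lambda$-similarity it induces on the matrices $\matr{0}{N}{0}{0}$ is precisely the stated two-sided action on $\cal N$. First I would verify the hypotheses of Definition~\ref{d3.2}. Realizing $\varGamma\oplus\Delta$ as the set of matrices $\matr{A}{0}{0}{B}$ with $A\in\varGamma$, $B\in\Delta$, it is upper triangular, and since $\varGamma$, $\Delta$ are basic its diagonal part $\diag(A)\oplus\diag(B)$ again lies in it; hence $\varGamma\oplus\Delta$ is a basic $(m+n)\times(m+n)$ algebra. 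A direct computation gives $\matr{A}{0}{0}{B}\matr{0}{N}{0}{0}=\matr{0}{AN}{0}{0}$ and $\matr{0}{N}{0}{0}\matr{A}{0}{0}{B}=\matr{0}{NB}{0}{0}$, so the assumptions $\varGamma{\cal N}\subset{\cal N}$, ${\cal N}\Delta\subset{\cal N}$ give $(\varGamma\oplus\Delta)(0\diagdown{\cal N})\subset 0\diagdown{\cal N}$ and $(0\diagdown{\cal N})(\varGamma\oplus\Delta)\subset 0\diagdown{\cal N}$; thus the pair satisfies \eqref{3.4}.

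Next I would describe the reduced algebra and the matrix space over a step-sequence. Because $\varGamma\oplus\Delta$ is block-diagonal, its equivalence relation on $T=\{1,\dots,m+n\}$ identifies no index $\le m$ with an index $>m$, so every step-sequence has the form $\underline{\ell}=(\underline{m},\underline{n})$ with $\underline{m}$ a step-sequence for $\varGamma$ and $\underline{n}$ one for $\Delta$. The relations \eqref{00} cutting out $\varGamma\oplus\Delta$ inside $k^{(m+n)\times(m+n)}$ say that the upper-left $m\times m$ part satisfies the relations of $\varGamma$, the lower-right $n\times n$ part those of $\Delta$, and the upper-right $m\times n$ part vanishes; the relations \eqref{3.5} cutting out $0\diagdown{\cal N}$ say that only the upper-right $m\times n$ part is nonzero and that it satisfies the relations defining $\cal N$. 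Applying Lemma~\ref{l3.2}(b) and Definition~\ref{d3.2} with the partition $\underline{\ell}\times\underline{\ell}$ therefore yields $\Lambda:=(\varGamma\oplus\Delta)_{\underline{\ell}\times\underline{\ell}}=\{\matr{C}{0}{0}{S}\mid C\in\varGamma_{\underline{m}\times\underline{m}},\ S\in\Delta_{\underline{n}\times\underline{n}}\}$ and ${\cal M}_{\underline{\ell}\times\underline{\ell}}=0\diagdown{\cal N}_{\underline{m}\times\underline{n}}$.

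Then I would identify the group $\Lambda^*$ and compute the action. A block-diagonal matrix $\matr{C}{0}{0}{S}$ is invertible in $k^{(m+n)\times(m+n)}$ iff $C$ and $S$ are both invertible, and by the remark at the end of Section~\ref{sec2} (the factorization $S=D(I-C)$ with $C$ nilpotent) the inverse then again lies in the reduced algebra; hence $\Lambda^*=\{\matr{C}{0}{0}{S}\mid C\in\varGamma_{\underline{m}\times\underline{m}}^*,\ S\in\Delta_{\underline{n}\times\underline{n}}^*\}$. For such a matrix $\matr{C}{0}{0}{S}^{-1}\matr{0}{N}{0}{0}\matr{C}{0}{0}{S}=\matr{0}{C^{-1}NS}{0}{0}$, so $\matr{0}{N}{0}{0}\sim_\Lambda\matr{0}{N'}{0}{0}$ precisely when $N'=C^{-1}NS$ for some $C\in\varGamma_{\underline{m}\times\underline{m}}^*$, $S\in\Delta_{\underline{n}\times\underline{n}}^*$. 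Since $\varGamma_{\underline{m}\times\underline{m}}^*$ is a group, replacing $C^{-1}$ by $C$ this is exactly the two-sided action $N\mapsto CNS$ of the statement, the row transformations governed by $\varGamma$ and the column transformations by $\Delta$ acting independently (a separated matrix problem). This identifies the canonical form problem determined by $(\varGamma\oplus\Delta,\ 0\diagdown{\cal N})$ with the claimed one.

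I do not expect a genuine obstacle: the computation is routine, and the only points needing a moment's care are the two identifications above --- that $(\varGamma\oplus\Delta)_{\underline{\ell}\times\underline{\ell}}$ really splits as the block-diagonal sum of the two reduced algebras (immediate from the description of reduced algebras by linear relations in Lemma~\ref{l3.2}) and that invertibility of a block-diagonal element of a reduced algebra amounts to invertibility of each diagonal block inside the corresponding reduced subalgebra (from the factorization recalled at the end of Section~\ref{sec2}).
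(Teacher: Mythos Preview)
Your proposal is correct and follows exactly the paper's approach: the paper's proof is the one-liner ``Put $M=\matr{0}{N}{0}{0}$ and apply Definition~\ref{d3.2},'' and your argument simply unpacks this application in detail, verifying that $\varGamma\oplus\Delta$ is basic, that the bimodule condition \eqref{3.4} holds, and that the induced $\Lambda$-similarity is the two-sided action $N\mapsto CNS$.
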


\begin{proof}
Put $M=\matr{0}{N}{0}{0}$ and apply Definition
\ref{d3.2}.
\end{proof}

In particular, if $\varGamma=k$, then the row
transformations are arbitrary; this
classification problem is studied intensively in
representation theory where it is given by a
vectorspace category \cite{rin, sim}, by a module
over an aggregate \cite{gab_roi, gab_vos}, or by
a vectroid \cite{bel_ser}.

\medskip

The next theorem shows that the problem of
classifying modules over a finite dimensional
algebra $\varGamma$ may be reduced to a linear
matrix problem. If the reader is not familiar
with the theory of modules (the used results can
be found in \cite{dr_ki}), he may omit this
theorem since it is not used in the next
sections. The algebra $\varGamma$ is isomorphic
to a matrix algebra, so by Theorem \ref{r1.0} we
may assume that $\varGamma$ is a reduced matrix
algebra. Moreover, by the Morita theorem
\cite{dr_ki}, the category of modules over
$\varGamma$ is equivalent to the category of
modules over its basic algebra, hence we may
assume that $\varGamma$ is a basic matrix
algebra. All modules are taken to be right
finite-dimensional.

\begin{theorem}  \label{t3.2}
For every basic $t\times t$ algebra $\varGamma$,
there is a natural bijection between:

(i) the set of isoclasses of indecomposable
modules over $\varGamma$ and

(ii) the set of indecomposable $(\varGamma\oplus
\varGamma,\ 0\diagdown {\cal R})$ canonical
matrices without zero ${\underline n}\times
{\underline n}$ matrices with ${\underline
n}=(0,\dots,0,n_{t+1},\dots,n_{2t})$, where
${\cal R}=\rad{\varGamma}$ (it consists of the
matrices from $\varGamma$ with zero diagonal).
 \end{theorem}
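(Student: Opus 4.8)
The plan is to set up the standard matrix-problem encoding of a module and then read off the statement from Belitskiĭ's algorithm and the Krull–Schmidt theorem (Theorem \ref{t3.1}). First I would recall that a right $\varGamma$-module $X$ of dimension $\underline n = (n_{t+1},\dots,n_{2t})$ (the components attached to the primitive idempotents $E_1,\dots,E_t$ of $\varGamma$) is the same thing as a collection of linear maps $X_\alpha \colon XE_i \to XE_j$ for each basis element $P_\alpha$ of $\mathcal R = \rad\varGamma$ living in $E_i\mathcal R E_j$, subject to the relations among the $P_\alpha$ inside $\varGamma$. Assembling these maps into a single $\underline m \times \underline n$ block matrix $N$ with $\underline m = \underline n$ — one block-row/column for each idempotent, the $(i,j)$ block of $N$ carrying the matrices of the maps in $E_i\mathcal R E_j$ — exhibits the module category of $\varGamma$ as (a full subcategory of) the matrix problem of Lemma \ref{l3.4} with $\varGamma$ acting on rows and $\varGamma$ acting on columns, i.e.\ the problem given by the pair $(\varGamma\oplus\varGamma,\ 0\diagdown\mathcal R)$. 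The key point to verify here is that $\mathcal N := \mathcal R$, as a $\varGamma$-bimodule, with the constraint that the diagonal blocks $E_i\mathcal R E_i$-part vanishes, records exactly the multiplication data of the module: a morphism of modules is precisely a pair $(C,S)$ of block matrices in $\varGamma_{\underline m\times\underline m}$ and $\varGamma_{\underline n\times\underline n}$ with $CN = NS$, which matches the morphisms of the matrix problem in Lemma \ref{l3.4}. Two modules are isomorphic iff the corresponding matrices $N$ are equivalent in the sense of that matrix problem, hence (Theorem \ref{t1.1}) iff they have the same $(\varGamma\oplus\varGamma,\ 0\diagdown\mathcal R)$-canonical form.

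Next I would match up the decompositions. By Theorem \ref{t1.1} every $N\in\mathcal R_{\underline n\times\underline n}$ is equivalent to a unique canonical matrix, so isoclasses of modules of dimension $\underline n$ biject with $(\varGamma\oplus\varGamma,\ 0\diagdown\mathcal R)$-canonical matrices of the block shape $(0,\dots,0,n_{t+1},\dots,n_{2t})$ (the first $t$ components are forced to be $0$ because in $0\diagdown\mathcal N$ the rows indexed by the second copy of $\varGamma$ and the columns indexed by the first copy carry no entries; this is exactly the shape constraint in (ii)). Direct sum of modules corresponds to the block-direct sum $\uplus$ of the associated matrices, and a module is indecomposable iff its matrix is indecomposable in the sense of Section \ref{s3.3}: this is because the endomorphism algebra of the module is, via the bimodule description, the algebra of pairs $(C,S)$ with $CN=NS$, whose idempotents are exactly the idempotents splitting $N$ as a block-direct sum. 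Finally, the condition ``without zero $\underline n\times\underline n$ summands'' removes the trivial module $0$ (the only indecomposable matrix that carries no information), so indecomposable modules biject with indecomposable canonical matrices that are not the zero matrix of shape $(0,\dots,0,n_{t+1},\dots,n_{2t})$. Uniqueness and well-definedness of this bijection on the level of isoclasses follows from the uniqueness part of Theorem \ref{t3.1} (Krull–Schmidt) applied to the matrix side and from the classical Krull–Schmidt theorem for finite-dimensional modules on the module side.

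The main obstacle I anticipate is not any single hard estimate but rather the bookkeeping needed to make the bimodule identification $\mathcal N = \mathcal R$ precise and to check that the action of $\varGamma$ on rows and on columns in Lemma \ref{l3.4} really reproduces, after the change from $t$ idempotents to $2t$ (the doubled algebra $\varGamma\oplus\varGamma$), the left and right module structure. Concretely one must be careful that: (a) the diagonal blocks of $N$ are required to vanish — this is why one uses $\mathcal R = \rad\varGamma$ rather than $\varGamma$ itself, since the ``identity part'' of the multiplication is already encoded by the idempotents and would otherwise produce spurious scalar blocks; (b) a canonical matrix $N$ of the prescribed shape need not itself be an associative module a priori — but the relations defining $\mathcal M_{\underline n\times\underline n}$ for this problem (which come from the relations in $\varGamma$, i.e.\ from the structure constants of $\mathcal R$) are exactly the associativity/module axioms, so every such canonical matrix \emph{does} come from a module; and (c) the ``natural'' in the statement should be spelled out as functoriality of the assignment on morphisms, which again follows from $\Hom$-set matching $CN=NS$. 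Once these identifications are in place the theorem is a direct corollary of Theorems \ref{t1.1} and \ref{t3.1} together with Lemma \ref{l3.4} and the Morita reduction recalled just before the statement.
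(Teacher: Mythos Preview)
Your direct encoding of a module $X$ by the matrices of the $\mathcal R$-action does not produce the matrix problem $(\varGamma\oplus\varGamma,\,0\diagdown\mathcal R)$. If $N$ records the action maps $XE_i\to XE_j$, then a module isomorphism is a \emph{single} change of basis $S$ with $N'=SNS^{-1}$; it is a similarity, not a pair $(C,S)$ acting independently on rows and columns as in Lemma~\ref{l3.4}. Allowing $C\ne S$ would identify non-isomorphic modules, so your assertion that ``a morphism of modules is precisely a pair $(C,S)$ with $CN=NS$'' is false, and your forced equality $\underline m=\underline n$ contradicts the statement, where the first $t$ and last $t$ components of the step-sequence are independent. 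There is a second problem hidden in your point (b): the module axioms are multiplicative---if $P_\alpha P_\beta=\sum_\gamma c_\gamma P_\gamma$ in $\mathcal R$, the representation must satisfy a relation of the form $X_\beta X_\alpha=\sum_\gamma c_\gamma X_\gamma$, a quadratic constraint on the blocks of $N$. The space $\mathcal M_{\underline n\times\underline n}$ of Definition~\ref{d3.2} is always a vector space cut out by \emph{linear} equations, so a generic canonical matrix for this problem will not come from a module in your sense.

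The paper's argument is a different reduction (Drozd's): encode $M$ by a minimal projective presentation $P\stackrel{\varphi}{\longrightarrow}Q\to M\to 0$ with $\im\varphi\subset\rad Q$ and $\Ker\varphi\subset\rad P$. Now $P$ and $Q$ are independent projectives, and an isomorphism $M\cong M'$ corresponds to a pair of isomorphisms $(f,g)$ of $P,Q$ with $g\varphi=\varphi'f$---exactly the independent row and column transformations of Lemma~\ref{l3.4}. Writing the entries of $\varphi$ in $E_\alpha\mathcal R E_\beta$ (the condition $\im\varphi\subset\rad Q$ is what forces entries into $\mathcal R$ rather than $\varGamma$) and then unpacking each entry as a submatrix lands $\varphi$ in the linear space $\mathcal R_{\underline m\times\underline n}$ with no quadratic constraints. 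The excluded indecomposable zero matrices of shape $(0,\dots,0,n_{t+1},\dots,n_{2t})$ are those with $Q=0$ but $P\ne 0$; there is one for each primitive idempotent, and they are excluded because they violate $\Ker\varphi\subset\rad P$, not because they are ``the trivial module~$0$''.
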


\begin{proof}
We will successively reduce
\begin{itemize}
\item [(a)]
the problem of classifying, up to isomorphism,
modules over a basic matrix algebra
$\varGamma\subset k^{t\times t}$
\end{itemize}
to a linear matrix problem.

Drozd \cite{dro1} (see also Crawley-Boevey
\cite{cra}) proposed a method for reducing the
problem (a) (with an arbitrary finite-dimensional
algebra $\varGamma$) to a matrix problem. His
method was founded on the following well-known
property of projective modules \cite[p.
156]{dr_ki}:

For every module $M$ over $\varGamma$, there
exists an exact sequence
\begin{gather}
P\stackrel{\varphi}{\longrightarrow} Q
\stackrel{\psi}{\longrightarrow} M\longrightarrow
0, \label{3.11}\\ \Ker\varphi\subset \rad P,
\quad \im\varphi\subset \rad Q, \label{3.11a}
\end{gather}
where $P$ and $Q$ are projective modules.
Moreover, if $$
P'\stackrel{\varphi'}{\longrightarrow} Q
'\stackrel{\psi'}{\longrightarrow}
M'\longrightarrow 0 $$ is another exact sequence
with these properties, then $M$ is isomorphic to
$M'$ if and only if there exist isomorphisms
$f:P\to P'$ and $g:Q\to Q'$ such that
$g\varphi=\varphi'f$.

Hence, the problem (a) reduces to
\begin{itemize}
\item [(b)]
the problem of classifying triples
$(P,Q,\varphi)$, where $P$ and $Q$ are projective
modules over a basic matrix algebra $\varGamma$
and $\varphi:P\to Q$ is a homomorphism satisfying
\eqref{3.11a}, up to isomorphisms $(f,g):
(P,Q,\varphi)\to (P',Q',\varphi')$ given by pairs
of isomorphisms $f:P\to P'$ and $g:Q\to Q'$ such
that $g\varphi=\varphi'f$.
\end{itemize}

By Lemma \ref{l3.2}, $\varGamma$ is a reduced
algebra, it defines an equivalence relation
$\sim$ in $T=\{1,\dots,t\}$ (see (\ref{0})).
Moreover, if $T/\!\sim\ =\{{\cal I}_1,\dots,
{\cal I}_r\}$, then the matrices
$E_{\alpha}=\sum_{i\in{\cal I}_{\alpha}} e_{ii}$
$(\alpha=1,\dots,r)$ form a decomposition
\eqref{3.3} of the identity of $\varGamma$ into a
sum of minimal orthogonal idempotents, and
$P_1=E_1\varGamma,\dots, P_r=E_r\varGamma$ are
all nonisomorphic indecomposable projective
modules over $\varGamma$.

Let $\varphi\in\Hom_{\varGamma}(P_{\beta},
P_{\alpha})$, then $\varphi$ is given by $F:=
\varphi (E_{\beta})$. Since $F\in P_{\alpha},\
F=E_{\alpha}F$. Since $\varphi$ is a
homomorphism, $\varphi (E_{\beta}G)=0$ implies
$FG=0$ for every $G\in {\varGamma}$. Taking
$G=I-E_{\beta}$, we have $F(I-E_{\beta})=0$, so
$F=F E_{\beta}=E_{\alpha}F E_{\beta}$. Hence we
may identify $\Hom_{\varGamma}(P_{\beta},
P_{\alpha})$ and $E_{\alpha}\varGamma E_{\beta}$:
\begin{equation}       \label{3.11b}
\Hom_{\varGamma}(P_{\beta}, P_{\alpha})=
\varGamma_{\alpha\beta}:= E_{\alpha}\varGamma
E_{\beta}.
\end{equation}
The set $\cal R$ of all matrices from $\varGamma$
with zero diagonal is the radical of $\varGamma$;
$\rad P_{\alpha}= P_{\alpha}{\cal R}=
E_{\alpha}{\cal R}$. Hence $\varphi\in
\Hom_{\varGamma}(P_{\beta}, P_{\alpha})$
satisfies $\im\varphi\subset \rad P_{\alpha}$ if
and only if $\varphi(E_{\beta})\in {\cal
R}_{\alpha \beta}:= E_{\alpha}{\cal R}
E_{\beta}$.

Let $$ P=P_1^{(p_1)}\oplus\dots\oplus
P_r^{(p_r)},\ \ Q=Q_1^{(q_1)}\oplus\dots\oplus
Q_r^{(q_r)} $$ be two projective modules, where
$X^{(i)}:=X\oplus\dots\oplus X$ ($i$ times); we
may identify $\Hom_{\varGamma}(P, Q)$ with the
set of block matrices
$\Phi=[\Phi_{\alpha\beta}]_{\alpha,\beta=1}^r$,
where $\Phi_{\alpha\beta}\in
\varGamma_{\alpha\beta}^{q_{\alpha}\times
p_{\beta}}$ is a ${q_{\alpha}\times p_{\beta}}$
block with entries in $\varGamma_{\alpha\beta}$.
Moreover, $\im\Phi\subset\rad Q$ if and only if
$\Phi_{\alpha\beta}\in {\cal
R}_{\alpha\beta}^{q_{\alpha}\times p_{\beta}}$
for all $\alpha,\beta$. The condition $\Ker
\varphi\subset \rad P$ means that there exists no
decomposition $P=P'\oplus P''$ such that $P''\ne
0$ and $\varphi(P'')=0$.

Hence, the problem (b) reduces to
\begin{itemize}
\item [(c)]
the problem of classifying $\underline{q}\times
\underline{p}$ matrices
$\Phi=[\Phi_{\alpha\beta}]_{ \alpha,\beta=1}^r$,
$\Phi_{\alpha\beta}\in {\cal
R}_{\alpha\beta}^{q_{\alpha}\times p_{\beta}}$,
up to transformations
\begin{equation}       \label{3.12}
\Phi\longmapsto C\Phi S,
\end{equation}
where $C=[C_{\alpha\beta}]_{\alpha,\beta=1}^r$
and $S=[S_{\alpha\beta}]_{\alpha,\beta=1}^r$ are
invertible $\underline{q}\times \underline{q}$
and $\underline{p}\times \underline{p}$ matrices,
$C_{\alpha\beta}\in \varGamma_{\alpha\beta}^{
q_{\alpha}\times q_{\beta}}$, and
$S_{\alpha\beta}\in \varGamma_{\alpha\beta}^{
p_{\alpha}\times p_{\beta}}$. The matrices $\Phi$
must satisfy the condition: there exists no
transformation \eqref{3.12} making a zero column
in $\Phi$.
\end{itemize}

Every element of $\varGamma_{\alpha\beta}$ is an
upper triangular matrix $a=[a_{ij}]_{i,j=1}^t$;
define its submatrix $\bar{a}=[a_{ij}]_{(i,j)\in
{\cal I}_{\alpha}\times{\cal I}_{\beta}}$ (by
\eqref{3.11b}, $a_{ij}=0$ if $(i,j)\notin {\cal
I}_{\alpha}\times{\cal I}_{\beta}$). Let $\Phi =
[\Phi_{\alpha\beta} ]_{\alpha,\beta=1}^r$ with
$\Phi_{\alpha\beta}\in {\cal
R}_{\alpha\beta}^{q_{\alpha}\times p_{\beta}}$;
replacing every entry $a$ of $\Phi_{\alpha\beta}$
by the matrix $\bar{a}$ and permuting rows and
columns to order them in accordance with their
position in $\varGamma$, we obtain a matrix
$\bar{\Phi}$ from $\varGamma_{\underline{m}\times
\underline{n}}$, where $m_i:=q_{\alpha}$ if
$i\in{\cal I}_{\alpha}$ and $n_j:=p_{\beta}$ if
$j\in{\cal I}_{\beta}$. It reduces the problem
(c) to
\begin{itemize}
\item [(d)]
the problem of classifying $\underline{m}\times
\underline{n}$ matrices $N\in{\cal
R}_{\underline{m}\times \underline{n}}$
($\underline{m}$ and $\underline{n}$ are
step-sequences) up to transformations
\begin{equation}       \label{3.13}
N\mapsto CNS,\quad C\in
\varGamma_{\underline{m}\times \underline{m}}^*,\
S\in
\varGamma_{\underline{n}\times\underline{n}}^*.
\end{equation}
The matrices $N$ must satisfy the condition: for
each equivalence class ${\cal I}\in T/\!\sim$,
there is no transformation \eqref{3.13} making
zero the first column in all the $i$th vertical
strips with $i\in\cal I$.
\end{itemize}

By Lemma \ref{l3.4}, the problem (d) is the
linear matrix problem given by the pair
$(\varGamma\oplus \varGamma,\ 0\diagdown {\cal
R})$ with an additional condition on the
transformed matrices: they do not reduce to a
block-direct sum with a zero summand whose size
has the form $\underline{n}\times\underline{n}$,
$\underline{n}=(0,\dots,0,n_{t+1},\dots,n_{2t})$.
\end{proof}

\begin{corollary}
The following three statements are equivalent:

(i) The number of nonisomorphic indecomposable
modules over an algebra $\varGamma$ is finite.

(ii) The set of nonequivalent ${n\times n}$
matrices over $\varGamma$ is finite for every
integer $n$.

(iii) The set of nonequivalent elements is finite
in every algebra $\Lambda$ that is Morita
equivalent \cite{dr_ki} to $\varGamma$ (two
elements $a,b\in\Lambda $ are said to be
equivalent if $a=xby$ for invertible $x,y\in
\Lambda$).
\end{corollary}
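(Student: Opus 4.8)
The plan is to prove the three implications $(\mathrm{i})\Rightarrow(\mathrm{ii})\Rightarrow(\mathrm{iii})\Rightarrow(\mathrm{i})$, using Theorem \ref{t3.2} as the bridge between modules and matrices. First I would reduce everything to a basic matrix algebra: by Theorem \ref{r1.0} and the Morita theorem, $\varGamma$ is Morita equivalent to a basic matrix algebra $\varGamma_0\subset k^{t\times t}$, and the module categories of $\varGamma$ and $\varGamma_0$ are equivalent, so the number of isoclasses of indecomposable modules is the same. Thus it suffices to prove the corollary with $\varGamma$ itself basic; note also that statement (iii) is already phrased Morita-invariantly, so it is really a statement about $\varGamma_0$.

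For $(\mathrm{i})\Rightarrow(\mathrm{ii})$ I would argue as follows. An $n\times n$ matrix over $\varGamma$ means a matrix $N\in{\cal M}_{\underline n\times\underline n}$ for suitable step-sequences — equivalently, by Lemma \ref{l3.4}, a homomorphism between projective $\varGamma$-modules of the form needed in problem (c) of the proof of Theorem \ref{t3.2}. The Krull--Schmidt theorem (the Corollary after Theorem \ref{t3.1}) says every such $N$ decomposes uniquely into indecomposable summands, and the indecomposable summands of bounded size $\underline m\times\underline n$ with $m_i,n_j\le n$ correspond (via Theorem \ref{t3.2}, after padding with the identity on the diagonal to realize $\Phi$ as $\bar\Phi$, and noting that cokernels of such $\Phi$ have bounded dimension) to indecomposable $\varGamma$-modules of bounded dimension; if there are only finitely many indecomposable modules altogether, there are only finitely many of bounded dimension, hence only finitely many indecomposable matrices of bounded size, hence only finitely many equivalence classes of $n\times n$ matrices. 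The implication $(\mathrm{ii})\Rightarrow(\mathrm{iii})$ is essentially a reformulation: an element $a$ of an algebra $\Lambda$ Morita equivalent to $\varGamma$, being equivalent iff $a=xby$ for units $x,y$, is exactly the $1\times 1$ matrix version of statement (ii) transported along the Morita equivalence — more precisely, $\Lambda\cong\varGamma_{\underline n\times\underline n}$ for some step-sequence $\underline n$ (or a full matrix algebra over the basic algebra), and the orbits of $\Lambda$ under two-sided multiplication by units biject with equivalence classes of $n\times n$ matrices over the basic algebra for a bounded $n$; finiteness of the latter gives finiteness of the former.

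The implication $(\mathrm{iii})\Rightarrow(\mathrm{i})$ closes the circle. Here I would take $\Lambda$ to be a specific algebra Morita equivalent to $\varGamma$ large enough that the indecomposable $\varGamma$-modules are detected by equivalence classes of elements of $\Lambda$: namely the endomorphism algebra of a sufficiently large multiple $P_1^{(n)}\oplus\dots\oplus P_r^{(n)}$ of the projectives, or alternatively a matrix algebra $\varGamma_{\underline n\times\underline n}$; then the presentation $P\xrightarrow{\varphi}Q\to M\to 0$ from \eqref{3.11}--\eqref{3.11a} shows that for $n$ large enough every indecomposable module is the cokernel of some $\varphi$ with $P,Q$ summands of $P_1^{(n)}\oplus\dots\oplus P_r^{(n)}$, and such $\varphi$ can be encoded as an element of $\Lambda$ up to the equivalence $a\sim xay$; distinct indecomposable modules give inequivalent elements by the uniqueness clause in \eqref{3.11}. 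So finitely many equivalence classes in $\Lambda$ forces finitely many indecomposable modules.

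The main obstacle I anticipate is the bookkeeping in $(\mathrm{iii})\Rightarrow(\mathrm{i})$: one needs to choose the single algebra $\Lambda$ and the single integer $n$ once and for all, whereas a priori the presentations $P\xrightarrow{\varphi}Q\to M\to 0$ for different $M$ may require projectives of unbounded rank. The point that makes it work is that finiteness of equivalence classes in $\Lambda$, for every $\Lambda$ Morita equivalent to $\varGamma$ (so in particular for $\varGamma_{\underline n\times\underline n}$ with all $n_i$ equal to a fixed but arbitrary $n$), already gives a uniform bound: if there were infinitely many indecomposable modules, their minimal projective presentations would have either unbounded rank — giving infinitely many nonequivalent elements in $\varGamma_{\underline n\times\underline n}$ for each $n$ by a diagonal argument — or bounded rank, giving infinitely many inequivalent elements in a single such $\Lambda$; either way statement (iii) fails. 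Making this dichotomy precise, and checking that the block-direct-sum decomposition of Theorem \ref{t3.1} translates correctly into direct-sum decompositions of modules via Theorem \ref{t3.2}, is the crux; the rest is routine translation between the three languages.
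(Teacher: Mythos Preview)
Your cycle of implications is the right shape, and the translations in $(\mathrm{i})\Rightarrow(\mathrm{ii})$ and $(\mathrm{ii})\Rightarrow(\mathrm{iii})$ are essentially correct (modulo the bookkeeping you flag about the condition \eqref{3.11a}, which the paper also has to address). The genuine gap is in $(\mathrm{iii})\Rightarrow(\mathrm{i})$, precisely at the ``diagonal argument'' you invoke in the unbounded-rank branch of your dichotomy.

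The difficulty is this: an element of $\varGamma_{\underline n\times\underline n}$ encodes a morphism between projectives that are summands of $\varGamma^{n}$, and its cokernel therefore has dimension at most $n\cdot\dim\varGamma$. Conversely, up to the finitely many extra summands coming from dropping \eqref{3.11a}, the equivalence classes of such elements are controlled by the isoclasses of modules of dimension at most $n\cdot\dim\varGamma$. So if $\varGamma$ happened to have infinitely many indecomposable modules but only finitely many of each dimension, then for every fixed $n$ there would be only finitely many modules of dimension $\le n\cdot\dim\varGamma$, hence only finitely many equivalence classes in $\varGamma_{\underline n\times\underline n}$, hence only finitely many equivalence classes of elements in every algebra Morita equivalent to $\varGamma$. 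Statements (ii) and (iii) would hold while (i) fails. Your ``diagonal argument'' cannot rule this out: unbounded rank of presentations tells you nothing about a single fixed $\Lambda$.

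What the paper actually invokes to close this gap is the second Brauer--Thrall conjecture (now a theorem): if $\varGamma$ has infinite representation type, then there exist infinitely many nonisomorphic indecomposable modules of the same dimension. This forces the bounded-rank branch of your dichotomy and makes the argument go through. It is a deep result, not something one can replace by elementary bookkeeping; you should cite it explicitly rather than trying to reprove it in passing.
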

The corollary follows from the proof of Theorem
\ref{t3.2} and the second Brauer--Thrall
conjecture \cite{gab_roi}: the number of
nonisomorphic indecomposable modules over an
algebra $\Lambda$ is infinite if and only if
there exist infinitely many nonisomorphic
indecomposable $\Lambda$-modules of the same
dimension. The condition \eqref {3.11a} does not
change the finiteness since every exact sequence
\eqref{3.11} is the direct sum of an exact
sequence $P_1\to Q_1 \to M\to 0$ that satisfies
this condition and exact sequences of the form
$e_i\varGamma \to e_i\varGamma \to 0\to 0$ and
$e_i\varGamma \to 0 \to 0\to 0$, where
$1=e_1+\dots+e_r$ is a decomposition of
$1\in\Gamma$ into a sum of minimal orthogonal
idempotents.


\section{Tame and wild matrix problems}
\label{sss3}

\subsection{Introduction}  \label{s4.1}

In this section, we prove the Tame--Wild Theorem
in a form approaching to the Third main theorem
from \cite{gab_vos}.

Generalizing the notion of a quiver and its
representations, Roiter \cite{roi} introduced the
notions of a bocs (=bimodule over category with
coalgebra structure) and its representations. For
each free triangular bocs, Drozd \cite{dro1} (see
also \cite{dro,dro2}) proved that the problem of
classifying its representations satisfies one and
only one of the following two conditions
(respectively,  is of {\it tame} or {\it wild}
type): (a) all but a finite number of
nonisomorphic indecomposable representations of
the same dimension belong to a finite number of
one-parameter families, (b) this problem
`contains' the problem of classifying pairs of
matrices up to simultaneous similarity. It
confirmed a conjecture due to Donovan and
Freislich \cite{don} states that every finite
dimensional algebra is either tame or wild.
Drozd's proof was interpreted by Crawley-Boevey
\cite{cra,cra1}. The authors of \cite{gab_vos}
got a new proof of the Tame--Wild Theorem for
matrix problems given by modules over aggregates
and studied a geometric structure of the set of
nonisomorphic indecomposable matrices.

The problem of classifying pairs of matrices up
to simultaneous similarity (i.e. representations
of the quiver \!\!
\unitlength 0.5mm \linethickness{0.4pt}
\begin{picture}(15,0)(-7,-1.5)
\put(0.00,0.00){\circle*{0.8}}
\put(-5.53,0.00){\oval(4.00,4.00)[l]}
\bezier{16}(-5.47,1.93)(-3.67,2.00)(-1.27,0.93)
\put(-1.20,-0.67){\vector(2,1){0.2}}
\bezier{20}(-5.47,-2.00)(-3.20,-1.87)(-1.20,-0.67)
\put(5.53,0.00){\oval(4.00,4.00)[r]}
\bezier{16}(5.47,1.93)(3.67,2.00)(1.27,0.93)
\put(1.20,-0.67){\vector(-2,1){0.2}}
\bezier{20}(5.47,-2.00)(3.20,-1.87)(1.20,-0.67)
\end{picture}
\!) is used as a measure of complexity since it
`contains' a lot of matrix problems, in
particular, the problem of classifying
representations of every quiver. For instance,
the classes of isomorphic representations of the
quiver \eqref{0.01} correspond, in a one-to-one
manner, to the classes of similar pairs of the
form
\begin{equation}       \label{0.02}
\left(\begin{bmatrix} I&0&0&0\\ 0&2I&0&0\\
0&0&3I&0\\ 0&0&0&4I
\end{bmatrix},
\begin{bmatrix}
A_{\alpha}&0&0&0\\ A_{\beta}&0&0&0\\
A_{\gamma}&0&0&0  \\ A_{\delta}&
A_{\varepsilon}&I& A_{\zeta}
\end{bmatrix}\right).
\end{equation}
Indeed, if $(J,A)$ and $(J,A')$ are two similar
pairs of the form \eqref{0.02}, then
$S^{-1}JS=J,\ S^{-1}AS=A'$, the first equality
implies $S=S_1\oplus S_2\oplus S_3\oplus S_4$ and
equating the (4,3) blocks in the second equality
gives $S_3=S_4$ (compare with Example
\ref{e0.1}).

Let $A_1,\dots,A_p\in k^{m\times m}$. For a
parametric matrix
$M(\lambda_1,\dots,\lambda_p)=[a_{ij}+b_{ij}
\lambda_1+\dots+d_{ij}\lambda_p]$
($a_{ij},b_{ij},\dots,d_{ij}\in k$), the matrix
that is obtained by replacement of its entries
with $a_{ij}I_m+b_{ij} A_1+\dots+d_{ij}A_p$ will
be denoted by $M(A_1,\dots,A_p)$.

In this section, we get the following
strengthened form of the Tame--Wild Theorem,
which is based on an explicit description of the
set of canonical matrices.

\begin{theorem}      \label{t0.1}
Every linear matrix problem satisfies one and
only one of the following two conditions
(respectively, is of tame or wild type):
\begin{itemize}
\item[(I)] For every step-sequence $\underline{n}$,
the set of indecomposable canonical matrices in
the affine space of $\underline{n} \times
\underline{n}$ matrices consists of a finite
number of points and straight
lines%
\footnote{ Contrary to \cite{gab_vos}, these
lines are unpunched. Thomas Br\"ustle and the
author proved in [\emph{Linear Algebra Appl.} 365 (2003) 115--133] that the number of points and lines
is bounded by $4^d$, where $d={\text{dim}}({\cal
M}_{\underline{n} \times \underline{n}})$. This
estimate is based on an explicit form of
canonical matrices given in the proof of Theorem
\ref{t0.1} and is an essential improvement of the
estimate \cite{bru}, which started from the
article \cite{gab_vos}. } of the form
$\{L(J_m(\lambda))\,|\, \lambda\in k\}$, where
$L(x)=[a_{ij}+xb_{ij}]$ is a one-parameter
${\underline l}\times {\underline l}$ matrix
($a_{ij},b_{ij}\in k$, ${\underline
l}={\underline n}/m$) and $ J_m(\lambda)$ is the
Jordan cell. Changing $m$ gives a new line of
indecomposable canonical matrices
$L(J_{m'}(\lambda))$; there exists an integer $p$
such that the number of points of
intersections%
\footnote{ Hypothesis: this number is equal to 0.
} of the line $L(J_m(\lambda))$ with other lines
is $p$ if $m>1$ and $p$ or $p+1$ if $m=1$.

\item[(II)] There exists a two-parameter
$\underline{n}\times \underline{n}$ matrix
$P(x,y)= [a_{ij}+xb_{ij}+ yc_{ij}]$
($a_{ij},b_{ij},c_{ij}\in k$) such that the plane
$\{P(a,b)\,|\, a,b\in k\}$ consists only of
indecomposable canonical matrices. Moreover, a
pair $(A,B)$ of $m\times m$ matrices is in the
canonical form with respect to simultaneous
similarity if and only if $P(A,B)$ is a canonical
$m\underline{n} \times m\underline{n}$ matrix.
\end{itemize}

\end{theorem}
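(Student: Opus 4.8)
The plan is to establish a dichotomy based on the structure of parametric canonical matrices developed in Section~\ref{s3.3'}. By Remark~\ref{r2.1}, for each step-sequence $\underline n$ there are only finitely many parametric canonical $\underline n\times\underline n$ matrices $M(\vec\lambda)$, $\vec\lambda=(\lambda_1,\dots,\lambda_p)$, and the indecomposable canonical matrices of size $\underline n\times\underline n$ are exactly the points $M(\vec a)$ with $\vec a$ in the domain of parameters $\cal D$ together with the additional indecomposability condition. So the whole question reduces to understanding how a single parametric canonical matrix $M(\vec\lambda)$ varies with $\vec\lambda$. The first step is to reduce $p$: each free Weyr box $M_{r_i}$ contributes eigenvalues $\lambda_{t_{i-1}+1}\prec\dots\prec\lambda_{t_i}$, and I claim that for an \emph{indecomposable} canonical matrix essentially at most one free Weyr box can occur and it must be a single Jordan cell $J_m(\lambda)$ with one parameter. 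Indeed, if two independent parameters $\lambda,\mu$ survive, then generically (outside the proper closed subvariety cut out by conditions (i) and the proper open conditions (ii), (iii) of Theorem~\ref{t2.2}) the matrix $M(\lambda,\mu)$ is canonical and indecomposable for all $(\lambda,\mu)$ in a Zariski-dense set, and one shows this forces a genuine two-parameter family.

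\textbf{Case split.} First I would show that if \emph{every} parametric canonical matrix, for every $\underline n$, has at most one surviving parameter and that parameter enters only through a single Jordan cell, then we are in case~(I). Here the box structure forces $M(\lambda)$ to have the shape $L(J_m(\lambda))$ where $L(x)=[a_{ij}+xb_{ij}]$ is obtained by substituting the scalar matrices $N_{\alpha\beta}$ (which, being reduced blocks in a canonical matrix, are $a_{ij}I$ or $xI$ over a common $m\times m$ Jordan cell) — this is exactly the mechanism already visible in Examples~\ref{e1.4} and~\ref{e2.1}, where the free Weyr box sits inside the reduced part and all other free blocks are scalar multiples of it. The domain $\cal D$ is then an open dense subset of the line $\{(a,b,\dots)\}$ cut out by finitely many conditions of types (i)--(iii); since $k$ is algebraically closed and infinite, after removing the finitely many bad values we still get a line, and the claim that the lines are \emph{unpunched} requires checking that no condition of type (i) or (ii) is non-trivial on the generic line — this follows because the $\Lambda$-canonical form is constructed uniformly in $\lambda$ away from finitely many special values, and one absorbs those finitely many special values into separate points. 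The bound $4^d$ quoted in the footnote comes from counting partitions into boxes.

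\textbf{The wild case.} If instead, for some $\underline n$, some parametric canonical matrix genuinely retains two independent parameters $x,y$ on a Zariski-dense subset of its domain, I would extract from it a two-parameter matrix $P(x,y)=[a_{ij}+xb_{ij}+yc_{ij}]$ whose values are all canonical and indecomposable — shrinking $\underline n$ and restricting to the generic stratum of $\cal D$ as needed, using that over an algebraically closed field a dense constructible set in the plane contains a plane's worth of points in the relevant sense (more precisely, the construction of the canonical form is polynomial in the parameters, so once two parameters survive on a dense open set the box-placement is constant there and extends to all of $k^2$). The final, and I expect hardest, step is the ``moreover'' clause: showing $P(A,B)$ is a canonical $m\underline n\times m\underline n$ matrix precisely when $(A,B)$ is a canonical pair under simultaneous similarity. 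The idea is to run Belitski\u\i's algorithm on $P(A,B)$ in parallel with the algorithm on $(A,B)$: where the scalar algorithm for $M(\vec\lambda)$ does an equivalence or similarity move on a block, the corresponding move on $P(A,B)$ is the block-scalar version of the same move, and the two free parameters $x,y$ get replaced by the commuting-up-to-conjugation roles of $A$ and $B$. One must check that the reduced algebra $\Lambda'$ at each step of the $P(A,B)$-run is exactly $\Lambda_{\underline n}$ tensored with the centralizer of $(A,B)$ — this is where the ``$\Lambda'$ is a reduced algebra'' bookkeeping from Section~\ref{sec3} and the Krull--Schmidt theorem (Theorem~\ref{t3.1}) do the work, letting one read off indecomposability of $P(A,B)$ from indecomposability of the pair $(A,B)$ together with indecomposability of the generic $M(x,y)$. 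The mutual exclusivity of (I) and (II) is then clear, since (II) produces a $2$-dimensional family of indecomposable canonical matrices which (I) forbids.
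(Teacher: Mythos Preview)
Your high-level dichotomy is the right one, and your mutual-exclusivity paragraph is fine, but the wild case contains a genuine gap. You propose to obtain $P(x,y)$ by ``shrinking $\underline n$ and restricting to the generic stratum of $\cal D$'' and assert that ``once two parameters survive on a dense open set the box-placement is constant there and extends to all of $k^2$.'' This does not work: the domain $\cal D$ from Theorem~\ref{t2.2} is cut out not only by polynomial conditions (i)--(ii) but also by the ordering conditions (iii) $\lambda_i\prec\lambda_{i+1}$, so it is never a full affine plane, and there is no reason the canonical-form box structure should extend across the excluded locus. The paper does \emph{not} take $P(x,y)$ from the original parametric family at all. Instead it locates a specific free $\star$-block $M^\star_{\zeta\eta}$ using a solvability criterion for systems of the form $\sum_j X_j^{f_{ij}}=N_i$ (Lemma~\ref{l4.2}, with the operation $A^f=\sum a_{ij}L^iAR^j$ of Lemma~\ref{l4.1}), then \emph{replaces} the parameter-carrying Weyr boxes by carefully engineered ones---e.g.\ the Weyr form of $J_1(a)\oplus J_3(a)\oplus J_5(a)\oplus J_7(a)\oplus J_9(a)$ in Case~1, or the $7\times 7$ matrix $P_v$ in Case~3---and inserts an explicit block $P^\star_{\zeta\eta}$ containing a small matrix like $T=\begin{bmatrix}x&y\\1&0\end{bmatrix}$. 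The odd sizes $1,3,5,7,9$ are not accidental: they are chosen so that the commutant equations force the relation $A_p\bar T_p=\bar T'_pB$ in \eqref{4.35}, which is what makes Belitski\u\i's algorithm on $P(A,B)$ reproduce exactly the simultaneous-similarity reduction of $(A,B)$. Your ``run the algorithm in parallel'' sketch has no mechanism to produce this; without the explicit construction and Lemma~\ref{l4.2} the ``moreover'' clause is unreachable.

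There is a second gap in the tame case. Your claim that the lines are unpunched (``one absorbs those finitely many special values into separate points'') misses the content of the statement: the theorem asserts that $\{L(J_m(\lambda))\mid\lambda\in k\}$ is an \emph{entire} line of indecomposable canonical matrices, and that the intersection count is independent of $m$ (up to $+1$ when $m=1$). This requires proving ${\cal D}_m={\cal D}_1$ for all $m$, which the paper does by a direct argument: the obstruction to $a\in{\cal D}_m$ is the unsolvability of a matrix system \eqref{5.12}--\eqref{5.13} with $L=R=J_m(a)$, and one shows by a triangular induction on entries that this system is solvable for arbitrary $m$ as soon as the scalar ($m=1$) version has linearly independent rows in $[f_{ij}(a,a)]$. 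Nothing in your outline supplies this step.
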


We will prove Theorem \ref{t0.1} analogously to
the proof of the Tame--Wild Theorem in
\cite{dro1}: We reduce an indecomposable
canonical matrix $M$ to canonical form (making
additional partitions into blocks) and meet a
free (in the sense of Section \ref{s3.3'}) block
$P$ that is reduced by similarity
transformations. If there exist infinitely many
values of eigenvalues of $P$ for which we cannot
simultaneously make zero all free blocks after
$P$, then the matrix problem satisfies the
condition (II). If there is no matrix $M$ with
such a block $P$, then the matrix problem
satisfies the condition (I). We will consider the
first case in Section \ref{s4.3} and the second
case in Section \ref{s4.4}. Two technical lemmas
are proved in Section \ref{s4.2}.


\subsection{Two technical lemmas}   \label{s4.2}

In this section we get two lemmas, which will be
used in the proof of Theorem \ref{t0.1}.

\begin{lemma}            \label{l4.1}
Given two matrices $L$ and $R$ of the form $
L=\lambda I_m+F$ and $R=\mu I_n+G$ where $F$ and
$G$ are nilpotent upper triangular matrices.
Define
\begin{equation}      \label{4.1}
 A^f=\sum_{ij} a_{ij}L^iAR^j
\end{equation}
for every $A\in k^{m\times n}$ and
$f(x,y)=\sum_{i,j\ge 0} a_{ij}x^iy^j\in k[x,y]$.
Then
\begin{itemize}
\item[(i)] $(A^f)^g=A^{fg}=(A^g)^f$;

\item[(ii)] $A^f=\sum b_{ij}F^iAG^j,$  where
$b_{00}=f(\lambda,\mu),\ b_{01}=\frac{\partial
f}{\partial y} (\lambda,\mu),\dots$;

\item[(iii)]
if $f(\lambda,\mu)=0$, then the left lower entry
of $A^f$ is $0$;

\item[(iv)]
if $f(\lambda,\mu)\neq 0$, then for every
$m\times n$ matrix $B$ there exists a unique $A$
such that $A^f=B$ (in particular, $B=0$ implies
$A=0$).
\end{itemize}
\end{lemma}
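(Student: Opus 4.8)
The plan is to treat the map $A\mapsto A^f$ as the action of the polynomial $f$ on $A$ through the two commuting families of operators $\ell_L\colon A\mapsto LA$ (left multiplication by $L$) and $r_R\colon A\mapsto AR$ (right multiplication by $R$) on the $mn$-dimensional space $k^{m\times n}$. Since $LA\cdot R$ and $L\cdot AR$ agree, $\ell_L$ and $r_R$ commute, so $A^f=f(\ell_L,r_R)A$ where $f(\ell_L,r_R)=\sum_{ij}a_{ij}\ell_L^{\,i}r_R^{\,j}$ is a well-defined operator. Part (i) is then immediate: $f(\ell_L,r_R)$ and $g(\ell_L,r_R)$ are both polynomials in the commuting pair $(\ell_L,r_R)$, so their composition is $(fg)(\ell_L,r_R)$ and composition of polynomials in commuting operators is commutative; hence $(A^f)^g=A^{fg}=(A^g)^f$. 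I would write $L=\lambda I+F$, $R=\mu I+G$ and substitute, which is exactly the content needed for (ii).

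For (ii), I substitute $L=\lambda I_m+F$ and $R=\mu I_n+G$ into \eqref{4.1} and expand by the binomial theorem: $L^i=\sum_p\binom{i}{p}\lambda^{i-p}F^p$ and $R^j=\sum_q\binom{j}{q}\mu^{j-q}G^q$, so $A^f=\sum_{p,q}\bigl(\sum_{i,j}a_{ij}\binom{i}{p}\lambda^{i-p}\binom{j}{q}\mu^{j-q}\bigr)F^pAG^q$. The coefficient of $F^pAG^q$ is precisely $\frac{1}{p!\,q!}\frac{\partial^{p+q}f}{\partial x^p\partial y^q}(\lambda,\mu)$ by the Taylor expansion of $f$ at $(\lambda,\mu)$; in particular $b_{00}=f(\lambda,\mu)$ and $b_{01}=\frac{\partial f}{\partial y}(\lambda,\mu)$, as claimed. (Over a field of characteristic zero this is the honest Taylor formula; over arbitrary $k$ one reads $b_{pq}$ as the Hasse derivative, i.e. the coefficient picked out by the binomial sum above, and the displayed values of $b_{00},b_{01}$ still hold.)

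For (iii), note $F$ and $G$ are nilpotent upper triangular, so $F^pAG^q$ for $(p,q)\neq(0,0)$ has zero entry in the lower-left corner (position $(m,1)$): $F^p$ with $p\ge 1$ is strictly upper triangular so $F^pA$ has zero last row, and similarly $AG^q$ with $q\ge 1$ has zero first column. Thus by (ii) the $(m,1)$ entry of $A^f$ equals $b_{00}$ times the $(m,1)$ entry of $A$, which is $0$ when $f(\lambda,\mu)=0$.

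For (iv), when $c:=f(\lambda,\mu)\neq 0$ I show $f(\ell_L,r_R)$ is invertible on $k^{m\times n}$, which gives existence and uniqueness of $A$ with $A^f=B$ for every $B$, and $B=0\Rightarrow A=0$. Write $A^f=cA+NA$ where $N:=\sum_{(p,q)\neq(0,0)}b_{pq}F^pAG^q$ is the operator built from the remaining terms of (ii); since each $F^p$ (and each $G^q$) with positive exponent is nilpotent and $F,G$ commute with themselves, $N$ is a sum of commuting nilpotent operators on $k^{m\times n}$ — more concretely, $F^pAG^q=0$ as soon as $p\ge m$ or $q\ge n$, so $N$ is a polynomial without constant term in the two commuting nilpotent operators $\ell_F\colon A\mapsto FA$ and $r_G\colon A\mapsto AG$, hence $N$ itself is nilpotent. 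Therefore $f(\ell_L,r_R)=c(I+c^{-1}N)$ is invertible, with inverse $c^{-1}(I-c^{-1}N+c^{-2}N^2-\cdots)$, a finite sum. This proves (iv).

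The only point requiring a little care is (ii) over a field of positive characteristic, where $\frac{\partial f}{\partial y}$ must be interpreted correctly; but the binomial expansion above makes the coefficients $b_{pq}$ unambiguous, and the two coefficients named in the statement, $b_{00}=f(\lambda,\mu)$ and $b_{01}=\partial f/\partial y(\lambda,\mu)$, are the genuine partial derivatives and cause no trouble. Everything else is routine manipulation of commuting operators and nilpotency, so I do not expect a substantive obstacle.
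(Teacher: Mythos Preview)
Your proof is correct and follows essentially the same line as the paper's: part (ii) by binomial expansion, part (iii) as an immediate consequence, and part (i) is left implicit in the paper but is the obvious commutativity you spell out. For (iv) the paper proceeds by iterated back-substitution of $A=b_{00}^{-1}\bigl[B-\sum_{i+j\ge1}b_{ij}F^iAG^j\bigr]$ into itself until $A$ disappears from the right-hand side (using $F^m=G^n=0$), which is exactly the finite Neumann series $c^{-1}(I-c^{-1}N+c^{-2}N^2-\cdots)$ you write down; your operator-theoretic phrasing via the commuting nilpotents $\ell_F,r_G$ is a cleaner packaging of the same computation.
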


\begin{proof}
(ii) $A^f=\sum a_{ij}(\lambda I+F)^iA(\mu I+G)^j=
\sum a_{ij}\lambda^i\mu^jA+\sum a_{ij}\lambda^i
j\mu^{j-1}AG+\cdots.$

(iii) It follows from (ii).

(iv) Let $f(\lambda,\mu)\ne 0$ and $A\in
k^{m\times n}$. By (ii), $B:=A^f= \sum
b_{ij}F^iAG^j,$  where $b_{00}= f(\lambda,\mu)$.
Then $A=b_{00}^{-1}[B-\sum_{i+j\ge 1}
b_{ij}F^iAG^j].$ Substituting this equality in
its right-hand side gives $$ A=b_{00}^{-1}B-
b_{00}^{-2}[\sum_{i+j\ge 1} b_{ij}F^iBG^j-
\sum_{i+j\ge 2} c_{ij}F^iAG^j]. $$ Repeating this
substitution $m+n$ times, we eliminate $A$ on the
right since $F^m=G^n=0$ (recall that $F$ and $G$
are nilpotent).
 \end{proof}

\begin{lemma}            \label{l4.2}
Given a polynomial $p\times t$ matrix $[f_{ij}]$,
$f_{ij}\in k[x,y]$, and an infinite set
${D}\subset k\times k$. For every
$l\in\{0,1,\dots, p\}$, $(\lambda,\mu)\in {D}$,
and ${\cal F}_l=\{m,n,F,G,N_1,\dots,N_l\},$ where
$F\in k^{m\times m}$ and $G\in k^{n\times n}$ are
nilpotent upper triangular matrices and
$N_1,\dots,N_l\in k^{m\times n}$, we define a
system of matrix equations
\begin{equation}                 \label{4.2}
{\cal S}_l={\cal S}_l(\lambda,\mu,{\cal
F}_l):\quad X_1^{f_{i1}}+\dots+
X_t^{f_{it}}=N_i,\quad i=1,\dots,l,
\end{equation}
(see \eqref{4.1}) that is empty if $l=0$.
Suppose, for every $(\lambda,\mu)\in {D}$ there
exists ${\cal F}_p$ such that the system ${\cal
S}_p$ is unsolvable.

Then there exist an infinite set ${D}' \subset
{D}$, a polynomial $d\in k[x,y]$ that is zero on
${D}'$,  a nonnegative integer $w\le \min
(p-1,t)$, and pairwise distinct
$j_1,\dots,j_{t-w}\in\{1,\dots, t\}$ satisfying
the conditions:
\begin{itemize}
\item[(i)]
For each $(\lambda,\mu)\in {D}'$ and ${\cal
F}_w$, the system ${\cal S}_w(\lambda,\mu,{\cal
F}_w)$ is solvable and every $(t-w)$-tuple
$S_{j_1},\, S_{j_2},\dots, S_{j_{t-w}}\in
k^{m\times n}$ is uniquely completed to its
solution $(S_1,\dots, S_t)$.

\item[(ii)]
For each $(\lambda,\mu)\in {D}'$, ${\cal F}_w^0
=\{m,n,F,G,0,\dots,0\}$, and for every solution
$(S_1,\dots, S_t)$ of ${\cal S}_w(\lambda,\mu,
{\cal F}_w^0)$, there exists a matrix $S$ such
that
\begin{equation}      \label{4.5'}
S_1^{f_{w+1,1}}+\dots+ S_t^{f_{{w+1},t}} = S^d.
 \end{equation}
\end{itemize}
\end{lemma}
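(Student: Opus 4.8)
The plan is to analyze the system $\mathcal{S}_p$ via Gaussian elimination \emph{on the level of the polynomial coefficient matrix} $[f_{ij}]$, combined with Lemma \ref{l4.1}. First I would observe that for a fixed pair $(\lambda,\mu)$ the maps $X\mapsto X^{f}$ are $k$-linear in $X$ and, by Lemma \ref{l4.1}(i), they commute and compose like multiplication of polynomials; by Lemma \ref{l4.1}(iv), $X\mapsto X^f$ is bijective exactly when $f(\lambda,\mu)\neq 0$. So the solvability behavior of $\mathcal{S}_l$ is governed by which ``pivot'' polynomials $f_{ij}$ are nonzero at $(\lambda,\mu)$. The natural strategy is to run the elimination row by row on $[f_{ij}]$ over the field of rational functions $k(x,y)$, but keeping careful track of the finitely many polynomial denominators and pivots that appear; each of them is either identically zero on all of $\mathcal{D}$, or vanishes only on a proper algebraic subset, hence on at most finitely many points of the infinite set $\mathcal{D}$ (since a nonzero polynomial in two variables cannot vanish on an infinite set of a special shape — more carefully, we pass to an infinite subset on which all the ``generically nonzero'' polynomials are actually nonzero, discarding only finitely many points at each of finitely many stages).

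Concretely, I would proceed inductively on the rows $i=1,\dots,p$. At stage $i$, suppose the first $i-1$ equations have already been reduced so that (after relabeling the unknowns) a set of them expresses certain $X_j$'s as explicit $\phantom{}^{g}$-combinations of a remaining free set, valid on an infinite subset of $\mathcal{D}$. Feed the $i$th equation: substitute the already-solved $X_j$'s, producing a new equation in the free unknowns with polynomial coefficients (products and sums of the $f_{ij}$'s and the inverse pivot polynomials). Two cases arise. If some coefficient polynomial $g$ of a still-free unknown is not identically zero on $\mathcal{D}$, then on the infinite subset of $\mathcal{D}$ where $g(\lambda,\mu)\neq0$ that unknown becomes dependent (Lemma \ref{l4.1}(iv)), the number of free unknowns drops by one, and we continue. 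If \emph{every} coefficient polynomial of every still-free unknown \emph{is} identically zero on $\mathcal{D}$, then on (an infinite subset of) $\mathcal{D}$ the $i$th equation, after substitution, reads $S^{d}$-combination-of-already-known-data $=N_i$ with all the ``dangerous'' coefficients vanishing — this is exactly the situation that will produce the conclusion (ii): we set $w=i-1$, take $d$ to be (a suitable product of) those identically-vanishing coefficient polynomials together with the accumulated denominators, let $\mathcal{D}'$ be the current infinite subset, and let $j_1,\dots,j_{t-w}$ be the unknowns still free at this stage. Part (i) is then precisely the inductive hypothesis at stage $w$ carried over to $\mathcal{S}_w$, and part (ii) follows by writing out the substituted $(w{+}1)$st equation: its left side is $S_1^{f_{w+1,1}}+\dots+S_t^{f_{w+1,t}}$ and, after the substitutions and clearing the single accumulated denominator, equals $S^{d}$ for an explicit $S$ built from the $S_j$'s via Lemma \ref{l4.1}(i),(ii). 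The bound $w\le\min(p-1,t)$ holds because the process must terminate at the first row where the ``all coefficients vanish identically'' alternative occurs, and it must occur at some row $\le p$: if it never did, then the whole system $\mathcal{S}_p$ would be solvable for all $(\lambda,\mu)$ in an infinite subset of $\mathcal{D}$ (each row uniquely determining one more unknown in terms of genuinely free ones, with the free ones arbitrary), contradicting the hypothesis that $\mathcal{S}_p$ is unsolvable for every $(\lambda,\mu)\in\mathcal{D}$ — note also $w$ cannot exceed $t$ since there are only $t$ unknowns to consume.

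The main obstacle I anticipate is bookkeeping rather than conceptual: one must verify that at each of the finitely many elimination stages only finitely many points of $\mathcal{D}$ are discarded, so that an infinite $\mathcal{D}'$ survives, and that the substitutions genuinely produce equations of the stated form $X^{g}$-combinations — this is where Lemma \ref{l4.1}(i) (the identities $(A^f)^g=A^{fg}=(A^g)^f$) does the work, since composing ``apply $f_{ij}$'' with ``apply $1/(\text{pivot})$'' stays inside the same operadic family and the inverse of $A\mapsto A^f$ is $B\mapsto B^{1/f}$ realized concretely by the geometric-series expansion in Lemma \ref{l4.1}(ii),(iv). A secondary subtlety is the precise choice of the polynomial $d$: it should be taken as the product of the pivot polynomials accumulated through stage $w$ (so that clearing denominators in the $(w{+}1)$st substituted equation leaves exactly a single factor $d$ on the right, giving $S^d$), and one must check $d$ is not identically zero on $\mathcal{D}'$ — indeed it is nonzero at every point of $\mathcal{D}'$ by construction — wait, (ii) asks $d$ to be \emph{zero} on $\mathcal{D}'$ in statement, so instead $d$ must be assembled from the coefficient polynomials that vanish identically on $\mathcal{D}$ at the terminal stage; I would absorb the (generically nonzero) pivot denominators into the explicit formula for $S$ and let $d$ be precisely the gcd-type combination of the identically-vanishing obstruction polynomials, after enlarging $\mathcal{D}'$ down to an infinite set on which $d$ vanishes and all needed pivots are nonzero.
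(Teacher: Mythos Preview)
Your approach is essentially the paper's: row-reduce the polynomial matrix $[f_{ij}]$ one equation at a time, using Lemma~\ref{l4.1}(iv) to pivot and Lemma~\ref{l4.1}(i) to keep everything in the form $X^{g}$, shrinking $D$ to an infinite subset at each step. Two points need repair, though.

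First, your dichotomy is misstated. It is \emph{not} true that a nonzero polynomial in $k[x,y]$ vanishes at only finitely many points of an arbitrary infinite $D\subset k^2$ (think $D$ lying on a curve). The correct split at each stage is: either some coefficient polynomial is nonzero on an \emph{infinite} subset of the current $D_w$ (then restrict to that subset and pivot), or every coefficient polynomial vanishes on all but finitely many points of $D_w$ (then stop). So you are not ``discarding finitely many points''; you are passing to an infinite subset on which the chosen pivot is nonzero. Your later sentences implicitly use the right version, but the earlier parenthetical would lead you astray.

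Second, and more seriously, you have not explained how to produce a \emph{single} polynomial $d$ that vanishes on an infinite $D'$ and divides all the terminal coefficients $r_1,\dots,r_{t-w}$. Saying ``gcd-type combination'' is not enough: a~priori the gcd of several nonzero two-variable polynomials could be $1$, and then nothing forces it to vanish on~$D'$. The paper closes this gap with B\'ezout's theorem (two coprime polynomials in $k[x,y]$ have only finitely many common zeros): since all the nonzero $r_i$ vanish on a common infinite set, any irreducible factor $d$ of one of them that vanishes on infinitely many of those points must divide every other $r_j$ (else $d$ and $r_j$ would be coprime with infinitely many common zeros). You also need to treat separately the degenerate case where all $r_i$ are the zero polynomial (or $w=t$); there the paper simply sets $d=0$, and \eqref{4.5'} holds trivially with $S$ arbitrary. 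Finally, to pass from the reduced $(w{+}1)$st equation back to the form \eqref{4.5'} you must invert the accumulated nonzero factor $\varphi$; the paper does this by choosing $\psi$ with $\varphi\psi\equiv 1 \bmod (x^s,y^s)$ for $s$ large enough that $F^s=G^s=0$, which is exactly your ``geometric-series'' remark made precise.
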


\begin{proof}
Step-by-step, we will simplify the system ${\cal
S}_p(\lambda,\mu, {\cal F}_p)$ with
$(\lambda,\mu)\in D$.

{\it The first step.} Let there exist a
polynomial $f_{1j}$, say $f_{1t}$, that is
nonzero on an infinite set ${D}_1\subset{D}$. By
Lemma \ref{l4.1}(iv), for each
$(\lambda,\mu)\in{D}_1$ and every
$X_1,\dots,X_{t-1}$ there exists a unique $X_t$
such that the first equation of \eqref{4.2}
holds. Subtracting the $f_{it}$th power of the
first equation of (\ref{4.2}) from the $f_{1t}$th
power of the $i$th equation of (\ref{4.2}) for
all $i>1$, we obtain the system
\begin{equation}      \label{4.6}
X_1^{g_{i1}}+\dots+
X_{t-1}^{g_{i,t-1}}=N^{f_{1t}}_i- N_1^{f_{it}},
\quad 2\le i\le l,
\end{equation}
where $g_{ij}=f_{ij}f_{1t}-f_{1j}f_{it}.$ By
Lemma 4.1(iv), the system ${\cal S}_p$ and the
system \eqref{4.6} supplemented by the first
equation of ${\cal S}_p$ have the same set of
solutions for all $(\lambda,\mu)\in {D}_1$ and
all ${\cal F}_p$.

{\it The second step.} Let there exist a
polynomial $g_{2j}$, say $g_{2,t-1}$, that is
nonzero on an infinite set ${D}_2\subset{D}_1$.
We eliminate $X_{t-1}$ from the equations
(\ref{4.6}) with $3\le i\le l$.

{\it The last step.} After the $w$th step, we
obtain a system
\begin{equation}      \label{4.7}
\left.\begin{matrix} X_{j_1}^{r_1}+\dots+
X_{j_{t-w}}^{r_{t-w}}= N\\ \hdotsfor[1.5]{1}
\end{matrix}\ \right\}
\end{equation}
(empty if $w=t$) and an infinite set ${D}_w$ such
that the projection
$$
(S_1,\dots,S_t)\mapsto (S_{j_1},\dots
S_{j_{t-w}})
$$
is a bijection of the set of solutions of the
system ${\cal S}_p(\lambda,\mu, {\cal F}_p)$ into
the set of solutions of the system \eqref{4.7}
for every $(\lambda,\mu)\in {D}_w$.

Since for every $(\lambda,\mu)\in {D}$ there
exists ${\cal F}_p$ such that the system ${\cal
S}_p$ is unsolvable, the process stops on the
system \eqref{4.7} with $w<p$ for which either

(a) there exists $r_i\ne 0$ and
$r_1(\lambda,\mu)=\dots= r_{t-w}(\lambda,\mu)=0$
for almost all  $(\lambda,\mu)\in{D}_w$, or

(b) $r_1=\dots=r_{t-w}=0$ or  $w=t$.

We add the $(w+1)$st equation
$$
X_1^{f_{w+1,1}}+\dots+ X_t^{f_{w+1,t}}=
X_1^{f_{w+1,1}}+\dots+ X_t^{f_{w+1,t}}
$$
to the system ${\cal S}_w(\lambda,\mu, {\cal
F}_w^0)$ with $(\lambda,\mu)\in{D}_w$ and ${\cal
F}_w^0 =\{m,n,F,G,0,\dots,0\}$ and apply the $w$
steps; we obtain the equation
\begin{equation}      \label{4.5}
X_{j_1}^{r_1}+\dots+ X_{j_{t-w}}^{r_{t-w}}
=(X_1^{f_{w+1,1}}+\dots+ X_t^{f_{w+1,t}})^
{\varphi}
\end{equation}
where $r_1,\dots,r_{t-w}$ are the same as in
\eqref{4.7} and $\varphi(\lambda,\mu)\ne 0$.
Clearly, the solutions $(S_1,\dots,S_t)$ of
${\cal S}_w(\lambda,\mu, {\cal F}_w^0)$ satisfy
\eqref{4.5}; moreover,
\begin{equation}      \label{4.5a}
(S_{j_1}^{\rho_1}+\dots+
S_{j_{t-w}}^{\rho_{t-w}})^d
=(S_1^{f_{w+1,1}}+\dots+ S_t^{f_{w+1,t}})^
{\varphi}
 \end{equation}
for $(\lambda,\mu)\in{D}'$, where
${\rho_1},\dots,{\rho_{t-w}}, d\in k[x,y]$ and
${D}'$ define as follows: In the case (a),
$r_1,\dots, r_{t-w}$ have a common divisor
$d(x,y)$ with infinitely many roots in ${D}_w$
(we use the following form of the Bezout theorem
\cite[Sect. 1.3]{gri}: two relatively prime
polynomials $f_1,f_2\in k[x,y]$ of degrees $d_1$
and $d_2$ have no more than $d_1d_2$ common
roots); we put $\rho_i=r_i/d$ and
${D}'=\{(\lambda,\mu)\in{D}_w\,|\,
d(\lambda,\mu)=0\}$. In the case (b), the
left-hand side of \eqref{4.5} is zero; we put
$\rho_1=\dots=\rho_{t-w}=0$ (if $w<t$), $d=0$,
and ${D}'={D}_w$.

We take $(\lambda,\mu)\in{D}'$ and put ${\bar
\varphi}(x,y)=\varphi(x+ \lambda,y+ \mu)$. Since
${\bar \varphi}(0,0)= \varphi(\lambda,\mu)\ne 0$,
there exists $\bar\psi\in k[x,y]$ for which
${\bar\varphi}{\bar\psi}\equiv 1 \mod (x^s,y^s)$,
where $s$ is such that $F^s=G^s=0$. We put
$\psi(x,y)={\bar\psi(x-\lambda, y-\mu)}$, then
$A^{\varphi\psi}=A$ for every $m\times n$ matrix
$A$. By \eqref{4.5a},
$$
S_1^{f_{w+1,1}}+\dots+ S_t^{f_{w+1,t}}=
(S_{j_1}^{\rho_1}+\dots+
S_{j_{t-w}}^{\rho_{t-w}})^{\psi d};
$$
it proves \eqref{4.5'}.
\end{proof}

\subsection{Proof of Theorem \ref{t0.1} for
wild problems} \label{s4.3}

A subblock of a free (dependent) block will be
named a {free $($dependent$)$ subblock}. In this
section, we consider a matrix problem given by a
pair $(\varGamma,\, \cal{M})$ such that there
exists a semi-parametric canonical matrix $M\in
{\cal M}_{\underline{n}\times \underline{n}}$
having a free box $M_q\neq \emptyset $ with the
following property:
\begin{equation}                 \label{4.8}
\parbox{25em}
{The horizontal or the vertical $(q-1)$-strip of
$M_q$ is linked (see Definition \ref{d2.1}) to a
$(q-1)$-strip containing an {\it infinite
parameter} from a free box $M_v,\ v<q$,  (i.e.,
the domain of parameters contains infinitely many
vectors with distinct values of this parameter).}
\end{equation}

We choose such $M\in {\cal
M}_{\underline{n}\times \underline{n}}$ having
the smallest $\sum \underline{n}=n_1+n_2+\cdots$
and take its free box $M_q\neq \emptyset $ that
is the first with the property \eqref{4.8}. Then
each $(q-1)$-strip of $M$ is linked to the
horizontal or the vertical $(q-1)$-strip
containing $M_q$. Our purpose is to prove that
the matrix problem satisfies the condition (II)
of Theorem \ref{t0.1}.   Let each of  the boxes
$M_q,\ M_{q+1},\dots$ that is free be replaced by
$0$, and let as many as possible parameters in
the boxes $M_1,\dots, M_{q-1}$ be replaced by
elements of $k$ (correspondingly we retouch
dependent boxes and narrow down the domain of
parameters ${\cal D}$) such that the property
\eqref{4.8} still stands (note that all the
parameters of a ``new" semi-parametric canonical
matrix $M$ are infinite and that $M_q=0$ but
$M_q\neq \emptyset $). The following three cases
are possible: {\it
     \begin{description}
     \item[\it Case 1:]
The horizontal and the vertical $(q-1)$-strips of
$M_q$ are linked to $(q-1)$-strips containing
distinct parameters $\lambda_l$ and $\lambda_r$
respectively.
      \item[\it Case 2:]
The horizontal or the vertical $(q-1)$-strip of
$M_q$ is linked to no $(q-1)$-strips containing
parameters.
      \item[\it Case 3:]
The horizontal and the vertical $(q-1)$-strips of
$M_q$ are linked to $(q-1)$-strips containing the
same parameter $\lambda$.
       \end{description}}


\subsubsection{Study Case 1}  \label{s4.3.1}

By Theorem \ref{t3.1}, the minimality of $\sum
\underline{n}$, and since each $(q-1)$-strip of
$M$ is linked to a $(q-1)$-strip containing
$M_q$, we have that $M$ is a two-parameter matrix
(hence $l,r\in\{1,2\}$) and, up to permutation of
$(q-1)$-strips, it has the form
$\hat{H}_l\oplus\hat{H}_r$, where
$\hat{H}_l=H_l(J_{s_l}(\lambda_l I))$ and
$\hat{H}_r= H_r(J_{s_r}(\lambda_r I))$ lie in the
intersection of all $(q-1)$-strips linked to the
horizontal and, respectively, the vertical
$(q-1)$-strips of $M_q$, $H_l(a)$ and $H_r(a)$
are indecomposable canonical matrices for all
$a\in k$, and
$$ J_{s}(\lambda I):= \begin{bmatrix} \lambda
I&I&&\text{\Large 0}\\
 &\lambda I&\ddots& \\ &&\ddots&I  \\
\text{\Large 0}&&&\lambda I
               \end{bmatrix}.
$$

We will assume that the parameters $\lambda_1$
and $\lambda_2$ are enumerated such that the free
boxes $M_u$ and $M_v$ containing $\lambda_1$ and,
respectively, $\lambda_2$ satisfy $u\le v$
(clearly, $M_u$ and $M_v$ are Weyr matrices).

Let first $u<v$. Then
\begin{equation}       \label{4.8'}
M_u=A\oplus J_{s_1}(\lambda_1 I) \oplus B,
 \quad
M_v= J_{s_2}(\lambda_2 I),
\end{equation}
where $A$ and $B$ lie in $\hat{H}_2$ ($M_v$ does
not contain summands from $\hat{H}_1$ since every
box $M_i$ with $i>u$ that is reduced by
similarity transformations belongs to $\hat{H}_1$
or $\hat{H}_2$).

By the
$\underline{n}^{\star}\times\underline{n}^{\star}$
{\it partition} of $M$ into blocks
$M_{ij}^{\star}$ (which will be called
$\star$-{\it blocks} and the corresponding strips
will be called $\star$-{\it strips}), we mean the
partition obtained from the partition into
$(v-1)$-strips by removing the divisions inside
of $J_{s_1}(\lambda_1 I)$ and the corresponding
divisions inside of the horizontal and vertical
$(u-1)$-strips of $M_u$ and inside of all
$(u-1)$-strips that are linked with them.
Clearly, $ J_{s_1}(\lambda_1 I)$ and
$J_{s_2}(\lambda_2 I)$ are free $\star$-blocks,
the other $\star$-blocks are zero or scalar
matrices, and $M_q$ is a part of a $\star$-block.
Denote by ${\cal I}$ (resp. ${\cal J}$)  the set
of indices of $\star$-strips of $\hat{H}_l$
(resp.  $\hat{H}_r$) in
$M=[M_{ij}^{\star}]_{i,j=1}^e$, then ${\cal
I}\cup {\cal J}= \{1,\dots,e\}$ and ${\cal I}\cap
{\cal J}=\varnothing$.

\begin{step}[a selection of
$M^{\star}_{\zeta \eta}$]  \label{s1}
 On this step we will select both a free
$\star$-block $M^{\star}_{\zeta \eta}>M_v$ with
$(\zeta ,\eta) \in{\cal I}\times{\cal J}$ and an
infinite set of $(a,b)\in {\cal D}$ such that
$M^{\star}_{\zeta \eta}$  cannot be made
arbitrary by transformations of $M(a,b)$
preserving all $M_1,\dots,M_v$ and all
$M^{\star}_{ij}<M^{\star}_{\zeta \eta}$. Such
$M^{\star}_{\zeta \eta}$ exists since $M_q\neq
\emptyset $ is a part of a free $M^{\star}_{ij}$
with $(i,j)\in {\cal I}\times{\cal J}$.

Denote by $\Lambda_0$ the algebra of all $S$ from
$\Lambda:={\varGamma}_{\underline{n}\times
\underline{n}}$ for which $MS$ and $SM$ are
coincident on the places of the boxes
$M_1,\dots,M_v$ (see \eqref{9''}). Then the
transformations
\begin{equation}        \label{4.10}
M \longmapsto M'=SMS^{-1},\quad S\in \Lambda_0^*,
\end{equation}
preserve $M_1,\dots,M_v$. Note that $\Lambda_0$
is an algebra of upper block-triangular
${\underline{n}^{\star}\times
\underline{n}}^{\star}$ (and even
${\underline{n}_v\times \underline{n}}_v$)
matrices.

Let $M^{\star}_{\zeta \eta}$ be selected and let
$S\in\Lambda_0^*$ be such that the transformation
\eqref{4.10} preserves all
$M^{\star}_{ij}<M^{\star}_{\zeta \eta}$. Equating
the $(\zeta ,\eta)$ $\star$-blocks in the
equality $M'S=SM$ gives
\begin{equation}       \label{4.15}
M_{\zeta 1}^{\star}S_{1\eta}^{\star}+\dots+
M_{\zeta ,\eta-1}^{\star}S_{\eta-1,\eta}^{\star}+
M_{\zeta \eta}^{\star\prime}S_{\eta\eta}^{\star}
= S_{\zeta \zeta }^{\star}M_{\zeta
\eta}^{\star}+\dots +S_{\zeta e}^{\star}M_{e
\eta}^{\star},
\end{equation}
where $e\times e$ is the number of $\star$-blocks
in $M$. Since
\begin{equation}       \label{4.16}
M_{ij}^{\star}\ne 0 \quad {\rm implies}\quad
(i,j)\in ({\cal I} \times {\cal I})\cup ({\cal
J}\times{\cal J}),
\end{equation}
the equality \eqref{4.15} may contain
$S_{ij}^{\star}$ only if $(i,j)\in {\cal I}
\times {\cal J}$ or $(i,j)=(\eta, \eta)$, hence $
M_{\zeta \eta}^{\star\prime }$ is fully
determined by $M$, $S_{\eta\eta}^{\star}$ and the
family of ${\star}$-blocks $$ S_{\cal{I
J}}^{\star}:= \{S_{ij}^{\star}\,|\,(i,j)\in{\cal
I}\times {\cal J} \}.$$

We will select $M^{\star}_{\zeta \eta}$ in the
sequence
\begin{equation}       \label{7.2}
F_1<F_2<\dots<F_{\delta}
\end{equation}
of all free $M^{\star}_{ij}$ such that $(i,j)\in
{\cal I}\times{\cal J}$ and
$M^{\star}_{ij}\not\subset M_1\cup\dots \cup
M_v$. For $\alpha\in \{1,\dots,\delta\}$ denote
by $\Lambda_{\alpha}$ the algebra of all
$S\in\Lambda_0$ for which $MS$ and $SM$ coincide
on the places of all $M^{\star}_{ij}\le
F_{\alpha}$. Then the transformations
\begin{equation}        \label{7.1}
M \longmapsto M'=SMS^{-1},\quad S\in
\Lambda_{\alpha}^*,
\end{equation}
preserve $M_1,\dots,M_v$ and all
$M^{\star}_{ij}\le F_{\alpha}$.

Let us investigate the family $S_{\cal{I
J}}^{\star}$ for each $S\in \Lambda_{\alpha}^*$.

The algebra
$\Lambda=\varGamma_{\underline{n}\times
\underline{n}}$ consists of all
${\underline{n}\times \underline{n}}$ matrices
$S=[S_{ij}]$ whose blocks satisfy a system of
linear equations of the form \eqref{2}--\eqref{3}
completed by $S_{ij}=0$ for all $i>j$. Let us
rewrite this system for smaller $\star$-blocks
$S_{ij}^{\star}$. The equations that contain
blocks from the family $S_{\cal{I J}}^{\star}$
contain no blocks $S_{ij}^{\star}\notin S_{\cal{I
J}}^{\star}$. (Indeed, by the definition of Case
1, the $(q-1)$-strips of $\hat{H}_1$ are not
linked to the $(q-1)$-strips of $\hat{H}_2$, so
the partition of $T$ into ${\cal I}$ and ${\cal
J}$ is in agreement with its partition $T/\!\sim$
into equivalence classes; see \eqref{0} and
Definition \ref{d2.1}.) Hence the family
$S_{\cal{I J}}^{\star}$ for $S\in
\Lambda^{\star}$ is given by a system of
equations of the form
\begin{equation}       \label{4.17}
\sum_{(i,j)\in{\cal I}\times{\cal J}}
\alpha_{ij}^{(\tau)}S_{ij}^{\star}=0,\quad \tau
=1,\dots,w_1.
\end{equation}

Denote by ${\cal B}_u$ (resp. ${\cal B}_{uv}$)
the part of $M$ consisting of all entries that
are in the intersection of $\cup_{i\le u}M_i$
(resp. $\cup_{u<i\le v}M_i$; by the union of
boxes we mean the part of the matrix formed by
these boxes) and $\cup_{(i,j)\in{\cal
I}\times{\cal J}}M_{ij}^{\star}$. Let us prove
that ${\cal B}_u$ and ${\cal B}_{uv}$ are unions
of $\star$-blocks $M_{ij}^{\star},\ (i,j)\in{\cal
I}\times{\cal J}$. It is clear for ${\cal B}_u$
since the partition into $\star$-strips is a
refinement of the partition into $(u-1)$-strips.
It is also true for ${\cal B}_{uv}$ since ${\cal
B}_{uv}$ is partitioned into rectangular parts
(see Definition \ref{d2.01}) of the form
$[M_{\tau+1}|M_{\tau+2}|\cdots| M_{\tau+s_1}]$ if
$r=1$ and $[M_{\tau_1}^T|M_{\tau_2}^T|\cdots|
M_{\tau_{{\scriptstyle s}_1}}^T]^T$ if $l=1$ (the
indices $l$ and $r$ were defined in the
formulation of Case 1); recall that all $M_i$ are
boxes and $M_u$ has the form \eqref{4.8'}.

By the definition of the algebra  $\Lambda_0$, it
consists of all $ S\in\Lambda$ such that $MS-SM$
is zero on the places of the boxes $M_i\le M_v$.
To obtain the conditions on the family
$S_{\cal{IJ}}^{\star}$ of blocks of $S\in
\Lambda_0$, by virtue of the statement
\eqref{4.16}, it suffices to equate zero the
blocks of $MS-SM$ on the places of all free
$\star$-blocks $M_{ij}^{\star}$ from ${\cal B}_u$
and ${\cal B}_{uv}$ (note that some of them may
satisfy $M_{ij}^{\star}>M_{\zeta \eta}^{\star}$).
Since all free $\star$-blocks of $M$ except for
$J_{s_1}(\lambda_1 I)$ and $J_{s_2}(\lambda_2 I)$
are scalar or zero matrices, we obtain a system
of equalities of the form
\begin{equation}       \label{4.18}
\sum_{(i,j)\in{\cal I}\times{\cal J}}
\alpha_{ij}^{(\tau)}S_{ij}^{\star}=0,\quad
\tau=w_1+1,\dots,w_2,
\end{equation}
for the places from ${\cal B}_u$ and
\begin{equation}       \label{4.19}
\sum_{(i,j)\in{\cal I}\times{\cal J}}
(S_{ij}^{\star})^{g_{ij}^{(\nu)}}=0,\quad
\nu=1,\dots,w_3,
\end{equation}
for the places from ${\cal B}_{uv}$, where
$(S_{ij}^{\star})^{g_{ij}^{(\nu)}}$,
$g_{ij}^{(\nu)}\in k[x,y]$ (more precisely,
$g_{ij}^{(\nu)}\in k[x]$ if $l=1$ and
$g_{ij}^{(\nu)}\in k[y]$ if $r=1$), are given by
(\ref{4.1}) with $L=J_{s_l}(\lambda_l I)$ and
$R=J_{s_r}(\lambda_r I)$.

Applying the Gauss-Jordan elimination algorithm
to the system \eqref{4.17}--\eqref{4.18}, we
choose $S_1,\dots,S_t\in S_{\cal{IJ}}^{\star}$
such that they are arbitrary and the other
$S_{ij}^{\star}\in S_{\cal{IJ}}^{\star}$ are
their linear combinations. Rewriting the system
\eqref{4.19} for $S_1,\dots,S_t$, we obtain a
system of equalities of the form
\begin{equation}       \label{4.20}
S_1^{f_{i1}}+\dots+ S_t^{f_{it}}=0,\quad
i=1,\dots,w_3.
\end{equation}

The algebra $\Lambda_{\alpha}$ $(1\le \alpha \le
\delta)$ consists of all $S\in \Lambda_{0}$ such
that $SM$ and $MS$ have the same blocks on the
places of all free $M_{xy}^{\star}\le
F_{\alpha}$:
\begin{equation}          \label{4.21}
M_{ x1}^{\star}S_{1y}^{\star}+\dots+ M_{
xy}^{\star}S_{yy}^{\star} =
S_{xx}^{\star}M_{xy}^{\star}+
\dots+S_{xe}^{\star}M_{ey}^{\star}.
\end{equation}
We may omit the equalities \eqref{4.21} for all
$(x,y)$ such that $M_{xy}^{\star}$ is contained
in $M_1,\dots,M_v$ (by the definition of
$\Lambda_{0}$), or  $(x,y)\notin \cal I\times
\cal J$ (by \eqref{4.16}, the equality
\eqref{4.21} contains $S_{ij}^{\star}\in
S_{\cal{IJ}}^{\star}$ only if $(x,y)\in \cal
I\times \cal J$). The remaining equalities
\eqref{4.21} correspond to (zero)
$M_{xy}^{\star}\in\{F_1,\dots, F_{\alpha}\}$ and
take the form
\begin{equation}       \label{4.22}
S_1^{f_{i1}}+\dots+ S_t^{f_{it}}=0,\quad
i=w_3+1,\dots,w_3+\alpha.
\end{equation}

It follows from the preceding that any sequence
of matrices $S_1,\dots,S_t$ is the sequence of
corresponding blocks of a matrix $S\in
\Lambda_{\alpha}$ if and only if the system
\eqref{4.20}$\cup$\eqref{4.22} holds for
$S_1,\dots,S_t$.

Put $\alpha=\delta$  (see \eqref{7.2}),
$p=w_3+\delta$, and $D=\{(a_l,a_r)\,|\,
(a_1,a_2)\in {\cal D}\}$. Since $M_q\neq
\emptyset $ is a part of a free $M^{\star}_{ij}$
with $(i,j)\in {\cal I}\times{\cal J}$, for every
$(a_l,a_r)\in  D$ we may change the right-hand
part of the system \eqref{4.20}$\cup$\eqref{4.22}
to obtain an unsolvable system. Applying Lemma
\ref{l4.2} to the system
\eqref{4.20}$\cup$\eqref{4.22}, we get an
infinite ${D}' \subset {D}$, a polynomial $d\in
k[x,y]$ that is zero on ${D}'$, a nonnegative
integer $w\le \min(p-1,t)$, and pairwise distinct
$j_1,\dots,j_{t-w}\in\{1,\dots, t\}$ satisfying
the conditions (i)--(ii) of Lemma \ref{l4.2}.  We
take $F_{w+1-w_3}$ as the desired block
$M^{\star}_{\zeta \eta}$. Since $M_q$ is the
first among free boxes $\neq \emptyset$ with the
property \eqref{4.8}, $M^{\star}_{\zeta
\eta}>M_v$. The equality \eqref{4.15} takes the
form
\begin{equation}       \label{4.23}
S_1^{f_{w+1,1}}+\dots+ S_t^{f_{w+1,t}}=
 S_{\zeta \zeta }^{\star}M_{\zeta \eta}^{\star}-
M_{\zeta \eta}^{\star\prime}
S_{\eta\eta}^{\star}.
\end{equation}
\end{step}

\begin{step}[a construction of $P(x,y)$]
   \label{s2}
On this step, we construct the two-para\-meter
matrix $P(x,y)$ from the condition (II) of
Theorem \ref{t0.1}.

Let us fix a pair $(a_l,a_r)\in {D}'$ in the
following manner. If the polynomial $d\in k[x,y]$
is zero, then $(a_l,a_r)$ is an arbitrary pair
from ${D}'$. Let $d\ne 0$; if $d$ is reducible,
we replace it by its irreducible factor. Since
$d$ is zero on the infinite set $D'$ that does
not contain infinitely many pairs $(a_l,a_r)$
with the same $a_l$ (otherwise, the $l$th
parameter can be replaced with $a_l$, but we have
already replaced as many as possible parameters
by elements of $k$ such that the property
\eqref{4.8} still stands), it follows $d\notin
k[x]$ and so $d'_y:=\partial d/\partial y \ne 0$.
Since $d$ is an irreducible polynomial,
$(d,d'_y)=1$; by the Bezout theorem (see the
proof of Lemma \ref{l4.2}), we may chose
$(a_l,a_r)\in{D}'$ such that
\begin{equation}      \label{4.24}
    d(a_l,a_r)=0, \quad d'_y(a_l,a_r)\neq 0.
\end{equation}

Denote by $P(x,y)$ the matrix that is obtained
from $M$ by replacement of its $\star$-blocks
$J_{s_l}(\lambda_l I)$ and $J_{s_r}(\lambda_r I)$
with Weyr matrices
\begin{equation}       \label{4.12}
L:={\varPi} (J_1(a_l)\oplus J_3(a_l) \oplus
J_5(a_l)\oplus J_7(a_l)\oplus J_9(a_l))
{\varPi}^{-1},\ R:=J_5(a_rI_2)
\end{equation}
(where ${\varPi}$ is a permutation matrix, see
Theorem \ref{t2'.1}) and the $\star$-block
$M_{\zeta \eta}^{\star}$ with
\begin{equation}       \label{4.13}
P_{\zeta  \eta}^{\star}= {\varPi}\begin{bmatrix}
Q_1 \\ Q_2
\\ Q_3
\\ Q_4 \\ Q_5
\end{bmatrix}, \ \
Q_i=
\begin{bmatrix}
0&0&0&0&0 \\[-7pt] \hdotsfor{5} \\ 0&0&0&0&0
 \\ T_i&0&0&0&0 \\
0&0&0&0&0 \\[-7pt] \hdotsfor{5} \\ 0&0&0&0&0
\end{bmatrix}, \ \
T=\begin{bmatrix} T_1 \\ T_2 \\ T_3 \\ T_4 \\ T_5
\end{bmatrix}=
\begin{bmatrix}
1&y \\ 1&x \\ 1&1 \\ 1&0 \\ 0&1
\end{bmatrix},
\end{equation}
where $Q_i$ is $(2i-1)$-by-$10$ (its zero blocks
are 1-by-2) and $T_i$ is in the middle row. (Each
nonzero free $\star$-block $M_{ij}^{\star}$ of
$M$, except for $J_{s_l}(\lambda_l I)$ and
$J_{s_r}(\lambda_r I)$, is a scalar matrix with
$(i,j)\in({\cal I}\times{\cal I})\cup({\cal
J}\times{\cal J})$; it is replaced by the scalar
matrix $P_{ij}^{\star}$ with the same diagonal
having the size $(1+3+5+7+9)\times (1+3+5+7+9)$
if $(i,j)\in {\cal I}\times\cal I$ and $10\times
10$ if $(i,j)\in {\cal J}\times\cal J$.) The
dependent blocks are respectively corrected by
formulas \eqref{3'.1'}.

Let us enumerate the rows and columns of $J
=J_1(a_l)\oplus J_3(a_l)\oplus J_5(a_l)\oplus
J_7(a_l)\oplus J_9(a_l)$ and the rows of
$[Q_i]_{i=1}^5$ by the pairs of numbers $\langle
1,1\rangle;$ $ \langle 3,1\rangle,$ $\langle
3,2\rangle,$ $ \langle 3,3\rangle;$ $ \langle
5,1\rangle,$ $ \langle 5,2\rangle,\dots, \langle
5,5\rangle;\dots; \langle 9,1\rangle,$ $ \langle
9,2\rangle,\dots, \langle 9,9\rangle $. Going
over to the matrix $P$, we have permuted them in
$L={\varPi}J{\varPi}^{-1}$ and $P_{\zeta
\eta}^{\star}={\varPi}[Q_i]$ in the following
order:
\begin{equation}       \label{4.14}
\begin{matrix}
\langle 9,1\rangle, \langle 7,1\rangle, \langle
5,1\rangle, \langle 3,1\rangle, \langle
1,1\rangle, \langle 9,2\rangle, \langle
7,2\rangle, \langle 5,2\rangle, \langle
3,2\rangle,
\\ \langle 9,3\rangle, \langle 7,3\rangle, \langle
5,3\rangle, \langle 3,3\rangle, \langle
9,4\rangle, \langle 7,4\rangle, \langle
5,4\rangle, \\ \langle 9,5\rangle, \langle
7,5\rangle, \langle 5,5\rangle, \langle
9,6\rangle, \langle 7,6\rangle, \langle
9,7\rangle, \langle 7,7\rangle, \langle
9,8\rangle, \langle 9,9\rangle
\end{matrix}
\end{equation}
(see Section \ref{ss2}). In the same manner, we
will enumerate the rows and columns in every
$i$th $\star$-strip $(i\in \cal I)$ of $P(x,y)$.

We will prove that $P(x,y)$ satisfies the
condition (II) of Theorem \ref{t0.1}. Let $(W,B)$
be a canonical pair of $m\times m$ matrices under
simultaneous similarity; put
\begin{equation}       \label{7.3}
K=P(W,B)
\end{equation}
and denote by $\bar Q_i,\ \bar T_i,\ \bar L,\
\bar R$ the blocks of $K$ that correspond to
$Q_i,\ T_i,\ L,\ R$ (see \eqref{4.12}) from
$P(x,y)$:
\begin{gather}
\bar L={\bar \varPi} \bar J {\bar
\varPi}^{-1},\quad \bar R=J_5(a_rI_{2m}),
\label{7.3'} \\ \bar J:=J_1(a_l I_m)\oplus
J_3(a_l I_m) \oplus J_5(a_l I_m) \oplus J_7(a_l
I_m) \oplus J_9(a_l I_m), \label{7.3'a}
\end{gather}
where ${\bar \varPi}$ is a permutation matrix. It
suffices to show that $K$ is a canonical matrix
(i.e., $K$ is stable relatively to the algorithm
of Section \ref{sec3}). To prove it, we will
construct the partition of $K$ into boxes.

Clearly, the boxes $M_1,\dots,M_u$ of $M$ convert
to the boxes $K_1,\dots,K_u$ of $K$. The box
$M_v$ of $M$ is replaced by the box $K_{\bar v}$
of $K$. The numbers $v$ and $\bar v$ may be
distinct since $M_u$ and $K_u$ may have distinct
numbers of cells. The part $K_1\cup\dots\cup
K_{\bar v}$ of $K$ is in canonical form. The
partition of $K$ obtained after reduction of
$K_1,\dots,K_{\bar v}$ is the partition into
$\bar v$-strips; the corresponding blocks will be
called $\bar v$-{\it blocks}; for instance,
${\bar T}_1,\dots, {\bar T}_5$ are $\bar
v$-blocks.

The transformations of $K$ that preserve the
boxes $K_1,\dots,K_{\bar v}$ are
\begin{equation}       \label{4.25}
K \longmapsto K'=SKS^{-1},\quad S\in
\bar\Lambda_{0}^*.
\end{equation}
For every matrix $S$ from the algebra
$\bar\Lambda_0$, the family ${\cal S}_{\cal I\cal
J}^{\star}$ of its $\star$-blocks satisfies the
system \eqref{4.17}--\eqref{4.19}, so
$S_1,\dots,S_t\in S_{\cal I \cal J}^{\star}$
(which correspond to $S_1,\dots,S_t$ for
$S\in\Lambda_0$) are arbitrary satisfying the
equations \eqref{4.20} and the other
$S_{ij}^{\star}\in S_{\cal I \cal J}^{\star}$ are
their linear combinations.
\end{step}

\begin{step}                       \label{s4}
 {\it We prove the following statement:}
\begin{equation}       \label{4.26}
\parbox{25em}
{Let $p\in\{1,\dots,5\}$ and let the matrix $K$
be reduced by those transformations \eqref{4.25}
that preserve all $\bar v$-blocks preceding $\bar
T_p$. Then $\bar T_p$ is transformed into $\bar
T_p'=A_p\bar T_pB,$ where $A_p$ is an arbitrary
nonsingular matrix and $B$ is a nonsingular
matrix for which there exist nonsingular matrices
$A_{p+1},\dots,A_5$ satisfying $\bar
T_{p+1}=A_{p+1}\bar T_{p+1}B,\dots, \bar
T_5=A_5\bar T_5B $.}
\end{equation}

The rows and columns of $P(x,y)$ convert to the
{\it substrips} of $K=P(W,B)$. For every
$i\in\cal I,$ we have enumerated the rows and
columns in the $i$th $\star$-strip of $P(x,y)$ by
the pairs \eqref{4.14}; we will use the same
indexing for the substrips in the $i$th
$\star$-strip of $K$.

By analogy with \eqref{4.15}, equating in
$K'S=SK$ (see \eqref{4.25}) the blocks on the
place of $K_{\zeta \eta}^{\star}$ gives
\begin{equation}       \label{4.27}
K^{\star\prime}_{\zeta 1}S^{\star}_{1\eta}+\dots+
K^{\star\prime}_{\zeta \eta}S^{\star}_{\eta\eta}=
S^{\star}_{\zeta \zeta }K^{\star}_{\zeta
\eta}+\dots+ S^{\star}_{\zeta e}K^{\star}_{e\eta}
\end{equation}
For $p$ from \eqref{4.26} and $ i\in\cal I$, we
denote by $ C_{\zeta i},\ \hat K'_{\zeta i},\
\hat K_{\zeta i}$ (resp. $D_{\zeta i})$ the
matrices that are obtained from $S^{\star}_{\zeta
i},\  K^{\star\prime}_{\zeta i},\
K^{\star}_{\zeta i}$ (resp. $K_{\zeta
i}^{\star}$) by deletion of all horizontal
(resp., horizontal and vertical) substrips except
for the substrips indexed by $\langle
2p-1,p\rangle,$ $ \langle 2p-1,p+1\rangle,\dots,
\langle 2p-1,2p-1\rangle $. Then \eqref{4.27}
implies
\begin{equation}       \label{4.28}
\hat K'_{\zeta 1}S^{\star}_{1\eta}+\dots+ \hat
K'_{\zeta \eta}S^{\star }_{\eta\eta}= C_{\zeta
\zeta }K^{\star}_{\zeta \eta}+\dots+ C_{\zeta
e}K^{\star}_{e\eta}.
\end{equation}

The considered in \eqref{4.26} transformations
\eqref{4.25} preserve all $\bar v$-blocks
preceding $\bar T_p$. Since $\bar T_p$ is a $\bar
v$-block from the $\langle 2p-1,p\rangle $
substrip of the $\zeta $th horizontal
$\star$-strip whose substrips are ordered by
\eqref{4.14}, the block $\hat K_{\zeta i}$
($i<\eta$) is located in a part of $K$ preserved
by these transformations, that is $\hat K'_{\zeta
i}=\hat K_{\zeta i}$. If $\eta>i\in{\cal I}$,
then $\hat K'_{\zeta i} S^{\star}_{i\eta}=
D_{\zeta i}C_{i\eta}$ since $K_{\zeta i}^{\star}$
is a scalar matrix or $\bar L$ (see
\eqref{7.3'}). If $\eta>i\in{\cal J}$, then $\hat
K'_{\zeta i}= \hat K_{\zeta i}=0$. So the
equality \eqref{4.28} is presented in the form
\begin{equation}       \label{4.29}
\sum_{i=1}^{\eta-1} D_{\zeta i}C_{i\eta}+ \hat
K'_{\zeta \eta}S^{\star }_{\eta\eta}= C_{\zeta
\zeta }K^{\star}_{\zeta \eta}+ \sum_{i=\zeta
+1}^e C_{\zeta i}K^{\star}_{i\eta}.
\end{equation}

The equality \eqref{4.29} contains $C_{ij}$ only
if $(i,j)\in {\cal I}\times\cal J$, so each of
them is a part of $S^{\star}_{ij}\in
S^{\star}_{\cal I\cal J}$.  We have chosen
$S_1,\dots,S_t$ in $S^{\star}_{\cal I\cal J}$
such that they are arbitrary and the others are
their linear combinations; let $C_1,\dots,C_t$ be
the corresponding parts of $S_1,\dots,S_t$. It is
easy to show that $C_1,\dots,C_t$ satisfy the
system that is obtained from
\eqref{4.20}$\cup$\eqref{4.22} with $w_3+\alpha
=w+1$ by replacing $S_1,\dots,S_t$ with
$C_1,\dots,C_t$. Each $D_{\zeta i}$ in
\eqref{4.29} is a scalar or zero matrix if $
K^{\star}_{\zeta i}$ is not $\bar L$ and
$D_{\zeta i}=J_p(a_lI_m)$ otherwise, each $
K^{\star}_{i \eta}$ ($i<\zeta$) is a scalar or
zero matrix or $\bar R=J_5(a_rI_{2m})$, so the
equality \eqref{4.29} may be rewritten in the
form
\begin{equation}       \label{4.30}
C_1^{f_{w+1,1}}+\dots+ C_t^{f_{w+1,t}}= C_{\zeta
\zeta }K^{\star}_{\zeta \eta}- \hat K'_{\zeta
\eta}S^{\star}_{\eta\eta},
\end{equation}
where $f_{w+1,j}$ are the same as in \eqref{4.23}
and $C_i^{f_{w+1,i}}$ is defined by (\ref{4.1})
with $L=J_p(a_l I_m)$ and $R=J_5(a_rI_{2m})$. By
\eqref{4.5'}, the left-hand side of \eqref{4.30}
has the form $C^d$, so
\begin{equation}       \label{4.32}
C^d=C_{\zeta \zeta }K^{\star}_{\zeta \eta}- \hat
K'_{\zeta \eta}S^{\star}_{\eta\eta}.
\end{equation}

Let us study the right-hand side of \eqref{4.32}.
Since $\zeta \in \cal I$ and $\eta\in \cal J$,
the blocks $S_{\zeta  \zeta }^{\star}$ and
$S_{\eta \eta}^{\star}$ are arbitrary matrices
satisfying
\begin{equation}       \label{4.33}
S_{\zeta  \zeta }^{\star} {\bar L}={\bar L}
S_{\zeta  \zeta }^{\star} , \quad
S_{\eta\eta}^{\star}{\bar R}={\bar R}
S_{\eta\eta}^{\star}.
\end{equation}
By \eqref{7.3'} and \eqref{4.33}, $Z:=\bar
\varPi^{-1}S^{\star}_{\zeta \zeta }\bar \varPi $
commutes with $\bar J$. Let us partition $Z$ into
blocks $Z_{ij}\ (i,j=1,\dots,5)$ and $X:=\bar
\varPi ^{-1}(S_{\zeta \zeta
}^{\star}K^{\star}_{\zeta \eta}- K_{\zeta
\eta}^{\star\prime} S^{\star}_{\eta\eta})= Z[\bar
Q_i]- [\bar Q_i']S_{\eta\eta}^{\star}$ (recall
that $K_{\zeta \eta}^{\star}=\bar \varPi[\bar
Q_i]$) into horizontal strips $X_1,\dots, X_5$ in
accordance with the partition of $\bar J$ into
diagonal blocks $J_1(a_lI_m),$ $J_3(a_l
I_m),\dots,J_9(a_l I_m)$ (see \eqref{7.3'a}).
Then
\begin{equation*}      
X_p=Z_{p1}\bar Q_1+\dots+Z_{p5}\bar Q_5-Q_p'
S_{\eta\eta}^{\star}.
\end{equation*}

Since $Z$ commutes with $\bar J$,
$Z_{pi}J_{2i-1}(a_l I_m)= J_{2p-1}(a_l I_m)
Z_{pi}$. Hence $Z_{pi}$ has the form $$
\begin{bmatrix}
A_i&&&\text{\LARGE *}\\ &A_i&&\\&&\ddots&\\
&&&A_i\\ &&&\\ \text{\LARGE 0}&&&
\end{bmatrix}
\quad  {\rm or}\quad
\begin{bmatrix}
&&A_i&&&\text{\LARGE *}\\ &&&A_i&&\\&&&&\ddots&\\
\text{\LARGE 0}&&&&&A_i
\end{bmatrix}$$
if $p\ge i$ or $p\le i$ respectively. We look at
$$ \bar Q_i=
\begin{bmatrix}
0&0&0&0&0\\[-7pt] \hdotsfor{5}\\0&0&0&0&0\\
\bar T_i&0&0&0&0\\ 0&0&0&0&0\\[-7pt]
\hdotsfor{5}\\0&0&0&0&0\\
\end{bmatrix},
 \ \text{to get }\
X_p=
\begin{bmatrix}
*&*&*&*&*\\[-7pt] \hdotsfor{5}\\ *&*&*&*&*\\
A_p\bar T_p-\bar T'_pB&*&*&*&*\\
0&0&0&0&0\\[-7pt] \hdotsfor{5}\\0&0&0&0&0\\
\end{bmatrix},
$$ where $A_p$ is the diagonal $m\times m$ block of
$Z_{pp}$ and $B$ is the diagonal $2m\times 2m$
block of $S_{\eta\eta}^{\star}$ (recall that
$S_{\eta\eta}^{\star}$ commutes with
$J_5(a_rI_{2m})$). Since $X_p$ is formed by the
substrips of $S_{\zeta \zeta
}^{\star}K^{\star}_{\zeta \eta}-  K_{\zeta
\eta}^{\star\prime}S^{\star}_{\eta\eta}$ indexed
by the pairs $\langle 2p-1,1\rangle,\dots,
\langle 2p-1,2p-1\rangle $, the equality
\eqref{4.32} implies
\begin{equation}       \label{4.34}
R:=C^d=
 \begin{bmatrix}
A_p\bar T_p-\bar T'_pB&*&*&*&*\\
0&0&0&0&0\\[-7pt] \hdotsfor{5}\\0&0&0&0&0\\
\end{bmatrix}.
\end{equation}

Let us prove that
\begin{equation}       \label{4.35}
A_p\bar T_p=\bar T'_pB.
\end{equation}
If $d=0$, then the equality \eqref{4.35}  follows
from \eqref{4.34}. Let $d\ne 0$. We partition $C$
and $R=C^d$ into $p\times 5$ blocks $C_{ij}$ and
$R_{ij}$ conformal to the block form of the
matrix on the right-hand side of \eqref{4.34}. By
Lemma \ref{l4.1}(ii) and \eqref{4.34},
\begin{gather}       \label{4.36}
R=C^d=\sum_{ij}b_{ij}J_p(0_m)^iCJ_5(0_{2m})^j,
\\ R_{ij}=0 \ \ {\rm if}\ \  i>1, \label{4.36a}
\end{gather}
where $b_{00}=d(a_l,a_r)=0$ and
$b_{01}=d_y'(a_l,a_r)\ne 0$ (see \eqref{4.24}).
Hence $R_{p1}=0$, it proves \eqref{4.35} for
$p=1$. Let $p\ge 2$, then $R_{p2}=b_{01}C_{p1}=0$
by \eqref{4.36} and $C_{p1}=0$  by \eqref{4.36a}.
Next,  $R_{p3}=b_{01}C_{p2}=0$ by \eqref{4.36}
and $C_{p2}=0$ by \eqref{4.36a}, and so on until
obtain $C_{p1}=\dots=C_{p4}=0$. By \eqref{4.36},
$R_{p-1,1}=0$, it proves \eqref{4.35} for $p=2$.
Let $p\ge 3$, then $R_{p-1,2}=b_{01}C_{p-1,1}=0$
and $C_{p-1,1}=0$; further,
$R_{p-1,3}=b_{01}C_{p-1,2}=0$ and $C_{p-1,2}=0$,
and so on until obtain
$C_{p-1,1}=\dots=C_{p-1,3}=0$. Therefore,
$R_{p-2,1}=0$; we have \eqref{4.35} for  $p=3$
and $C_{p-2,1}=C_{p-2,2}=0$ otherwise.
Analogously, we get \eqref{4.35} for  $p=4$ and
$C_{p-3,1}= 0$ otherwise, and, at last,
\eqref{4.35} for  $p=5$.

By \eqref{4.26}, the considered transformation
preserves all $\bar v$-blocks preceding $\bar
T_l$, so we may repeat this reasoning for each
$l\in\{p+1,\dots,5\}$ instead of $p$ and obtain $
A_l\bar T_l=\bar T_lB$. It proves \eqref{4.26}.
\end{step}

\begin{step}[a construction of $ K_{\bar v+1},
 K_{\bar v+2},\ldots$]
          \label{s5}
The boxes $K_1,\dots, K_{\bar v}$ were
constructed at the end of Step \ref{s2}. The
first nonzero free $\bar v$-block of $K$ that is
not contained in $K_1\cup\dots \cup K_{\bar v}$
is $\bar T_5=[0_m\,I_m]$. The $\bar v$-blocks
that preceding $\bar T_5$ and are not contained
in $K_1\cup\dots\cup K_{\bar v}$ are zero, so
they are the boxes $K_{\bar v+1},\dots,
K_{v_1-1}$ for a certain $v_1\in\mathbb N$. By
the statement \eqref{4.26}, the admissible
transformations with $K$ that preserve the boxes
$K_1,\dots, K_{v_1-1}$ reduce, for $\bar T_5$, to
the equivalence transformations; therefore, $\bar
T_5=[0_m\,I_m]$ is canonical and $K_{v_1}=\bar
T_5$.

Conformal to the block form of
$K_{v_1}=[0_m\,I_m]$, we divide each $\bar
v$-block of $K$ into two $v_1$-blocks. The first
nonzero free $v_1$-block that is not contained in
$K_1\cup\dots \cup K_{v_1}$ is $I_m$ from $\bar
T_4=[I_m\,0_m]$. The $v_1$-blocks that preceding
it and are not contained in $K_1\cup\dots \cup
K_{v_1}$ are the boxes $K_{v_1+1},K_{v_2},\dots,
K_{v_2-1}$ for a certain $v_2\in\mathbb N$. By
the statement \eqref{4.26}, the admissible
transformations with $K$ that preserve the boxes
$K_1,\dots, K_{v_2-1}$ reduce, for $\bar T_4$, to
the transformations of the form $$ \bar
T_4\longmapsto A\bar T_4
\begin{bmatrix}
B&C\\ 0&B
\end{bmatrix}
$$ with nonsingular $m\times m$ matrices $A$ and $B$.
Since the block $\bar T_4=[I_m\,0_m]$ is
canonical under these transformations, we have
$\bar T_4=[I_m\,0_m]=[K_{v_2}|K_{v_2+1}]$; and so
on until we get the partition of $K$ into boxes.
\end{step}

It remains to consider the case $u=v$; in this
case the parameters $\lambda_1$ and $\lambda_2$
are parameters of a certain free box $M_v$. Since
$\lambda_1$ and $\lambda_2$ are distinct (by
prescribing of Case 1) parameters of the same
Weyr matrix $M_v$, $a_1\ne a_2$ for all $(a_1,
a_2)$ from the domain of parameters ${\cal
D}\subset k^2$. We will assume that the
parameters $\lambda_1$ and $\lambda_2$ are
enumerated such that there exists $(a_1, a_2)\in
{\cal D}$ with $a_1\prec a_2$, then by Definition
\ref{d2'.1} of Weyr matrices $a_1\prec a_2$ for
all $(a_1, a_2)\in {\cal D}$.   By the minimality
of $\sum \underline{n}$, $M_v= J_{s_1}(\lambda_1
I)\oplus J_{s_2}(\lambda_2 I)$, all
$(v-1)$-strips are linked, and $M=H(M_v)=
\hat{H}_l\oplus\hat{H}_r$, where $H(a)$ is an
indecomposable canonical matrix for all $a\in k$,
$\hat{H}_l:=H(J_{s_l}(\lambda_l I))$ and
$\hat{H}_r:= H(J_{s_r}(\lambda_r I))$ (i.e.
$H=H_l=H_r$, see the beginning of Section
\ref{s4.3.1}). By the
$\underline{n}^{\star}\times\underline{n}^{\star}$
{\it partition} of $M$ into blocks
$M_{ij}^{\star}$, we mean the partition into
$(v-1)$-strips supplemented by the division of
every $(v-1)$-strip into two substrips in
accordance with the partition of $M_u$ into
subblocks $J_{s_1}(\lambda_1 I)$ and
$J_{s_2}(\lambda_2 I)$. Then $ J_{s_1}(\lambda_1
I)$ and $J_{s_2}(\lambda_2 I)$ are free
$\star$-blocks, the other $\star$-blocks are zero
or scalar matrices, and $M_q$ is a part of a
$\star$-block. The reasoning in this case is the
same as in the case $u<v$ (but with ${\cal
B}_{uv}=\varnothing$).


\subsubsection{Study Case 2}  \label{s4.3.2}

In this case, $M=M(\lambda)$ is a one-parameter
matrix with an infinite domain of parameters
${\cal D}\subset k$. Up to permutation of
$(q-1)$-strips, $M$ has the form
$\hat{H}_1\oplus\hat{H}_2$, where $H_1(a)$ and
$H_2$ are indecomposable canonical matrices for
all $a\in k$, $\hat{H}_1:=H_1(J_{s_1}(\lambda
I))$, and $\hat{H}_2$ is obtained from $H_2$ by
replacement of its elements $h_{ij}$ with
$h_{ij}I_{s_2}$. The matrix $ J_{s_1}(\lambda I)$
is a part of $M_v$ (see \eqref{4.8}). Let $l,r\in
\{1,2\}$ be such that the horizontal
$(q-1)$-strip of $M_q$ crosses $\hat{H}_l$ and
its vertical $(q-1)$-strip crosses $\hat{H}_r$.
Under the ${\star}$-partition of $M$, we mean the
partition obtained from the $(q-1)$-partition by
removing the divisions inside of $
J_{s_1}(\lambda I)$ and the corresponding
divisions inside of the horizontal and vertical
$(v-1)$-strips of $M_v$ and all $(v-1)$-strips
that are linked with them; then $M_v$ is a
$\star$-block. Denote by ${\cal I}$ (resp. ${\cal
J}$)  the set of indices of $\star$-strips of
$\hat{H}_l$ (resp.  $\hat{H}_r$) in $M$.

Let $M_z$ be the last nonzero free box of $M$
(clearly, $z\ge v$). Denote by ${\cal B}$ the
part of $M$ consisting of all entries that are in
the intersection of $\cup_{i\le z}M_i$  and
$\cup_{(i,j)\in{\cal I}\times{\cal
J}}M_{ij}^{\star}$. By analogy with Case 1,
${\cal B}$ is a union of $\star$-blocks
$M_{ij}^{\star}$ for some $(i,j)\in{\cal
I}\times{\cal J}$.

Let $\Lambda_0$ be the algebra of all
$S\in\Lambda$ such that $MS-SM$ is zero on the
places of the boxes $M_i\le M_z$. Equating zero
the blocks of $MS-SM$ on the places of all free
$\star$-blocks $M_{ij}^{\star}$ from ${\cal B}$,
we obtain a system of equalities of the form
\eqref{4.18}--\eqref{4.19} with
$g_{ij}^{(\tau)}\in k[x]$ if $l=1$ and
$g_{ij}^{(\tau)}\in k[y]$ if $l=2$ for
$\star$-blocks of $S=[S_{ij}^{\star}]\in
\Lambda_0$ from the family $S_{\cal{I
J}}^{\star}:= \{S_{ij}^{\star}\,|\,(i,j)\in{\cal
I}\times {\cal J} \}$. Solving the system
\eqref{4.17}--\eqref{4.18}, we choose
$S_1,\dots,S_t\in S_{\cal{IJ}}^{\star}$  such
that they are arbitrary and the others are their
linear combinations, then we present the system
\eqref{4.19} in the form \eqref{4.20}.

Let $ F_1<F_2<\dots<F_{\delta}$ be the sequence
of all free $M^{\star}_{ij}$ such that
$M^{\star}_{ij}\not\subset M_1\cup\dots \cup M_z$
and $(i,j)\in {\cal I}\times{\cal J}$. Denote by
$\Lambda_{\alpha}$ $(\alpha\in
\{1,\dots,\delta\})$ the algebra of all
$S\in\Lambda_0$ for which $MS$ and $SM$ are
coincident on the places of all
$M^{\star}_{ij}\le F_{\alpha}$; it gives
additional conditions \eqref{4.22} on $
S_{\cal{IJ}}^{\star}$.

By analogy with Case 1, the transformation
\eqref{7.1} preserves all $M_i$ with $i\le z$ and
all $M^{\star}_{ij}\le F_{\alpha}$; moreover, any
sequence of matrices $S_1,\dots,S_t$ is the
sequence of the corresponding blocks of a matrix
$S\in \Lambda_{\alpha}$ if and only if the system
\eqref{4.20}$\cup$\eqref{4.22} holds.

Putting $\alpha=\delta$, $p=w_3+\delta$,
$D=\{(a,a)\,|\,a\in{\cal D}\}$  and applying
Lemma \ref{l4.2} to
\eqref{4.20}$\cup$\eqref{4.22} (note that
$f_{ij}\in k[x]$ or $f_{ij}\in k[y]$), we get an
infinite set ${D}' \subset {D}$, a polynomial
$d$, an integer $w\le \min(p-1, t)$, and
$j_1,\dots,j_{t-w}\in\{1,\dots, t\}$ satisfying
the conditions (i)--(ii) of Lemma \ref{l4.2}. The
polynomial $d\in k[x]\cup k[y]$ is zero since it
is zero on the infinite set $\{a\,|\,(a,a)\in
D'\}$.

Let us fix $a_1,\dots,a_5\in {D}',\ a_1\prec
a_2\prec\dots \prec a_5$ (with respect to the
ordering in $k$, see the beginning of Section
\ref{ss2}), and denote by $P(x,y)$ the matrix
that is obtained from $M$ by replacement of

(i) its $\star$-block $J_{s_1}(\lambda I)$ with
$\diag(a_1,a_2,\dots,a_5)$,

(ii) all entries $h_{ij}I_{s_2}$ of $\hat{H}_2$
with $h_{ij}I_2$, and

(iii) $M_{\zeta  \eta}^{\star}$ with $T$ (see
\eqref{4.13}) if $l=1$ and with $$
\begin{bmatrix}
1&0&1&x&y\\ 0&1&1&1&1
\end{bmatrix} \ \ \text{if}\ \ l=2,$$
and by the corresponding correction of dependent
blocks. As in Case 1, we can prove that $P(x,y)$
satisfies the condition (II) of Theorem
\ref{t0.1}.


\subsubsection{Study Case 3}  \label{s4.3.3}

The free box $M_v$ is a Weyr matrix that is
similar to $J_{s_1}(\lambda I)\oplus
J_{s_2}(\lambda I)$ ($s_1\ne s_2$) or $
J_{s}(\lambda I)$, hence it has the form
$M_v=\lambda I+F$, where $F$ is a nilpotent upper
triangular matrix. Clearly, $M=M(\lambda)$ is a
one-parameter matrix with an infinite domain of
parameters ${\cal D}\subset k$; moreover,
$M=H(M_v)$, where $H(a)\ (a\in k)$ is an
indecomposable canonical matrix. Under the
$\star$-partition we mean the partition into
$(v-1)$-strips (then $M_v$ is a $\star$-block).

\setcounter{step}{0}
\begin{step}[a construction of  $P(x,y)$]
      \label{s5.1}

Let $\Lambda_{-1}$ (resp. $\Lambda_{0}$) be the
algebra of all $S\in\Lambda$ such that $MS-SM$ is
zero on the places of the boxes $M_i< M_{v}$
(resp. $M_i\le M_{v}$). Then $\Lambda_{-1}$ is a
reduced
$\underline{n}^{\star}\times\underline{n}^{\star}$
algebra whose equivalence relation \eqref{0} in
$T^{\star}=\{1,\dots,e\}$ is full (i.e. every two
elements are equivalent). The blocks of
$S\in\Lambda_{-1}$ satisfy a system of equations
of the form
\begin{gather}
S_{11}^{\star}= S_{22}^{\star}=\dots=
S_{ee}^{\star}, \label{5.1}\\ \sum_{i < j}
c_{ij}^{(l)}S_{ij}^{\star}= 0,\quad l=1,2,\dots,
q_{_{T^{\star} T^{\star}}} \label{5.2}
\end{gather}
(see \eqref{3}). Solving the system \eqref{5.2},
we choose $S_1,\dots,S_t\in \{ S_{ij}^{\star}\,|
\, i<j\}$ such that they are arbitrary and the
other $ S_{ij}^{\star}\ (i<j)$ are their linear
combinations. The algebra $\Lambda_0$ consists of
all $S\in \Lambda_{-1}$ for with
$S^{\star}_{11}M_v= M_v S^{\star}_{11}$.

Let $F_1<F_2<\dots<F_{\delta}$ be the sequence of
all free $M^{\star}_{ij}\not\subset M_1\cup\dots
\cup M_v$, and let $\Lambda_{\alpha}$ $(\alpha\in
\{1,\dots,\delta\})$ denote the algebra of all
$S\in\Lambda_0$ for which $MS$ and $SM$ are
coincident on the places of all
$M^{\star}_{ij}\le F_{\alpha}$; it gives
conditions on $S_i$ of the form
\begin{equation}       \label{4.22a}
S_1^{f_{i1}}+\dots+ S_t^{f_{it}}=0,\quad
i=1,\dots,\alpha,
\end{equation}
where $ f_{ij}\in k[x,y]$ and $S_i^{f_{ij}}$ is
defined by (\ref{4.1}) with $L=R=M_v$.

Putting $p=\delta$, $D=\{(a,a)\,|\, a\in{\cal
D}\}$ and applying Lemma \ref{l4.2} to
\eqref{4.22a} with $\alpha:=\delta$, we get an
infinite ${D}' \subset {D}$, $d\in k[x,y]$, $w\le
\min(p-1, t)$, and $j_1,\dots,j_{t-w}
\in\{1,\dots, t\}$. Since $d(a,a)=0$ for all
$(a,a)\in {D}'$, $d(x,y)$ is divisible by $x-y$
by the Bezout theorem (see the proof of Lemma
\ref{l4.2}). We may take
\begin{equation}       \label{5.4'}
d(x,y)=x-y.
\end{equation}

Let us fix an arbitrary $a\in {D}'$ and denote by
$P(x,y)$ the matrix that is obtained from $M$ by
replacement of its $\star$-blocks $M_v$ and
$M_{\zeta \eta}^{\star}$ with
\begin{equation}       \label{5.4}
P_v=\begin{bmatrix} aI_2&0&I_2&0\\ 0&aI_1&0&0\\
0&0&aI_2&I_2\\ 0&0&0&aI_2
\end{bmatrix}\ \ \text{and}\ \
P_{\zeta  \eta}^{\star}=\begin{bmatrix} 0&0&0&0\\
0&0&0&0\\ T&0&0&0\\0&Q&0&0
\end{bmatrix},
\end{equation}
where
\begin{equation}       \label{5.5}
Q=\begin{bmatrix} 1\\ 0 \end{bmatrix},\quad
T=\begin{bmatrix} x &y\\ 1&0 \end{bmatrix},
\end{equation}
and by the corresponding correction of dependent
blocks. ($P_v$ is a Weyr matrix that is similar
to $J_1(a)\oplus J_3(aI_2)$.) We prove that
$P(x,y)$ satisfies the condition (II) of Theorem
\ref{t0.1}. Let $(W,B)$ be a canonical pair of
$m\times m$ matrices under simultaneous
similarity, put $K=P(W,B)$ and denote by $\bar Q$
and $\bar T$ the blocks of $K$ that correspond to
$Q$ and $T$. It suffices to show that $K$ is a
canonical matrix.
\end{step}

\begin{step}[a construction of $K_1,\dots, K_{v_1}$]
      \label{s5.2}
The boxes $M_1,\dots,M_v$ of $M$ become the boxes
$K_1,\dots,K_v$ of $K$.

Let us consider the algebra $\bar\Lambda_{-1}$
for  the matrix $K$. For each
$S\in\bar\Lambda_{-1}$, its $\star$-blocks
satisfy the system \eqref{5.2}, so  we may choose
$S_1,\dots,S_t\in \{S_{ij}^{\star}\, |\, i<j\}$
(on the same places as for $\Lambda_{-1}$) that
are arbitrary and the other $ S_{ij}^{\star}\
(i<j)$ are their linear combinations. A matrix
$S\in\bar\Lambda_{-1}$ belongs to $\bar\Lambda_0$
if and only if the matrix $S_{11}^{\star}=
S_{22}^{\star}=\cdots$ (see \eqref{5.1}) commutes
with $K_v$, that is
\begin{equation}       \label{5.6}
S_{11}^{\star}= S_{22}^{\star}=\cdots=
S_{ee}^{\star}=
\begin{bmatrix}
A_0&B_2&A_1&A_2\\ &B_0&0&B_1\\ &&A_0&A_1\\
\text{\LARGE 0}&&& A_0
\end{bmatrix}
\end{equation}
by \eqref{5.4} and by analogy with Example
\ref{e2'.1}.

The first nonzero free $v$-block of $K$ that is
not contained in $K_1\cup\dots\cup K_v$ is $\bar
Q$ (see \eqref{5.5}). The $v$-blocks that
preceding $\bar Q$ and are not contained in
$K_1\cup\dots\cup K_v$ are the boxes
$K_{v+1},\dots, K_{v_1-1}$ for a certain
$v_1\in\mathbb N$.

The block $\bar Q$ is reduced by the
transformations
\begin{equation}       \label{5.7}
K \longmapsto K'=SKS^{-1},\quad S\in
\bar\Lambda_0^*
\end{equation}
with the matrix $K$; these transformations
preserve the boxes $K_1,\dots,K_{v}$ of $K$. Each
$\star$-strip of $P(x,y)$ consists of 7 rows or
columns (since $P_v\in k^{7\times 7}$, see
\eqref{5.4}); they become the {\it substrips} of
the corresponding $\star$-strip of $K$. Denote by
$ C_{ij},\ \hat K'_{ij},\ \hat K_{ij}$ (resp.
$D_{ij})$ the matrices that are obtained from
$S^{\star}_{ij},\ K^{\star\prime}_{ij},\
K^{\star}_{ij}$ (resp. $K_{ij}^{\star}$) by
elimination of the first 5 horizontal (resp.,
horizontal and vertical) substrips; note that
$\bar Q$ is contained in the remaining 6th and
7th substrips of $K_{\zeta \eta}^{\star}$. The
equation \eqref{4.27} implies \eqref{4.28}. Since
all $K_{ij}^{\star}<K_{\zeta \eta}^{\star}$ are
upper triangular, the equation \eqref{4.28}
implies \eqref{4.29}.

The equality \eqref{4.29} takes the form
\eqref{4.30}, where $C_1,\dots,C_t$ are the
corresponding parts of $S_1,\dots,S_t;\
f_{w+1,j}$ are the same as in \eqref{4.22a} and
$C_j^{f_{w+1,j}}$ is defined by (\ref{4.1}) with
$L=aI_{2m}$ (a part of $K_v$) and $R=K_v$.

By \eqref{4.5'} and \eqref{5.4'},
\begin{equation*}
C_1^{f_{w+1,1}}+\dots+ C_t^{f_{w+1,t}}=C^{x-y};
\end{equation*}
by \eqref{4.30},
\begin{equation}       \label{4.32a}
C^{x-y}=C_{\zeta \zeta }K^{\star}_{\zeta \eta}-
\hat K'_{\zeta \eta}S^{\star}_{\eta\eta}.
\end{equation}
As follows from the form of the second matrix in
\eqref{5.4} and from \eqref{5.6},
\begin{equation}       \label{5.8}
S_{\zeta \zeta }^{\star}K_{\zeta \eta}^{\star}-
K_{\zeta \eta}^{\star\prime}
S_{\eta\eta}^{\star}=
\begin{bmatrix}
*&*&*&*\\ *&*&*&*\\ A_0\bar T-\bar T'A_0&*&*&* \\
0&A_0\bar Q-\bar Q'B_0&0&*
\end{bmatrix}.
\end{equation}
Looking at the form of the matrix $K_v$ (see
\eqref{5.4}), we have
\begin{equation}       \label{5.9}
K_vD-DK_v=
\begin{bmatrix}
*&*&*&*\\ *&*&*&*\\ D_{41}&*&*&* \\ 0&0&-D_{41}&*
\end{bmatrix}
\end{equation}
for an arbitrary block matrix $D=[D_{ij}]$. So
the equality \eqref{4.32a} can be presented in
the form
\begin{equation}       \label{5.9a}
\begin{bmatrix}
0&0&-D_{41}&*
\end{bmatrix}=\begin{bmatrix}
0&A_0\bar Q-\bar Q'B_0&0&*
\end{bmatrix},
\end{equation}
where $C=[D_{41}\, D_{42}\, D_{43}\, D_{44}]$. It
follows $ A_0\bar Q-\bar Q'B_0=0$ and $\bar
Q'=A_0\bar QB_0^{-1}$. Therefore, the block $\bar
Q$ is reduced by elementary transformations.
Since $\bar
Q=\left[\genfrac{}{}{0pt}{}{I}{0}\right]$ is
canonical, $K_{v_1}:=\bar Q$ is a box.
\end{step}

\begin{step}[a construction of
$K_{v_1+1},\dots, K_{v_2}$]
  \label{s5.3}
The partition into $v_1$-strips coincides with
the partition into substrips, so the $v_1$-blocks
are the subblocks of $K$ corresponding to the
entries of $P$. The first nonzero free subblock
of $K$ that is not contained in $K_1\cup\dots\cup
K_{v_1}$ is $\bar T_{21}=I_m$ from $\bar T=[{\bar
T}_{ij} ]_{i,j=1}^2$. The subblocks that
preceding $\bar T_{21}$ and are not contained in
$K_1\cup\dots\cup K_{v_1}$ are the boxes
$K_{v_1+1},\dots, K_{v_2-1}$ for a certain
$v_2\in\mathbb N$.

Let a transformation \eqref{5.7} preserve the
boxes $K_1,\dots,K_{v_2-1}$. Denote by $ C_{ij},\
\hat K'_{ij},\ \hat K_{ij}$ (resp. $D_{ij})$ the
matrices that are obtained from $ S^{\star}_{ij},
K^{\star\prime}_{ij},\ K^{\star}_{ij}$ (resp.
$K_{ij}^{\star}$) by elimination of the first 4
horizontal (resp., horizontal and vertical)
substrips; note that $\bar T_{21}=I_m$ is
contained in the 5th horizontal substrip of
$K_{\zeta \eta}^{\star}$. Let $C_1,\dots,C_t$ be
the corresponding parts of $S_1,\dots,S_t$.
Similar to Step \ref{s5.2}, we have the
equalities  \eqref{4.30} and \eqref{4.32a}. As
follows from \eqref{5.8} and \eqref{5.9}, the
equality \eqref{4.32a} may be presented in the
form
\begin{equation}       \label{5.10}
\begin{bmatrix}
(D_{41})_2&*&*&* \\ 0&0&-D_{41}&*
\end{bmatrix}=\begin{bmatrix}
(A_0\bar T-\bar T'A_0)_2&*&*&* \\ 0&A_0\bar
Q-\bar Q'B_0&0&*
\end{bmatrix}
\end{equation}
(compare with \eqref{5.9a}), where $(D_{41})_2$
and $(A_0\bar T-\bar T'A_0)_2$ are the lower
substrips of $D_{41}$ and $A_0\bar T-\bar T'A_0.$
It follows that $A_0\bar Q-\bar Q'B_0=0$,
$D_{41}=0$, and so $( A_0\bar T-\bar T'A_0)_2=0$.
But $\bar Q=\bar Q' =
\left[\genfrac{}{}{0pt}{}{I}{0}\right]$, hence
\begin{equation}       \label{5.11}
A_0=\begin{bmatrix} A_{11}& A_{12}\\ 0& A_{22}
\end{bmatrix},
\end{equation}
and we have $A_{22}\bar T_{21}-\bar
T'_{21}A_{11}=0$, so $\bar T_{21}$ is reduced by
equivalence transformations. Therefore, $\bar
T_{21}=I_m$ is canonical and $K_{v_2}=\bar
T_{21}=I_m$.
\end{step}

\begin{step}[a construction of
$K_{v_2},\ K_{v_2+1},\ldots$]
   \label{s5.4}
The partition into $v_2$-strips coincides with
the partition into substrips. The first nonzero
free subblock of $K$ that is not contained in
$K_1\cup \dots\cup K_{v_2}$ is $\bar T_{11}=W$
from $\bar T$. The subblocks that preceding $\bar
T_{11}$ and are not contained in
$K_1\cup\dots\cup K_{v_2}$ are the boxes
$K_{v_2+1},\dots, K_{v_3-1}$ for a certain
$v_3\in\mathbb N$.

Let a transformation \eqref{5.7} preserve the
boxes $K_1,\dots,K_{v_3-1}$. Denote by $ C_{ij},\
\hat K'_{ij},\ \hat K_{ij}$ (resp. $D_{ij})$ the
matrices that are obtained from $
S^{\star}_{ij},\ K^{\star\prime}_{ij},\
K^{\star}_{ij}$ (resp. $K_{ij}^{\star}$) by
elimination of the first 3 horizontal (resp.,
horizontal and vertical) substrips. In this case,
instead of \eqref{5.10} we get the equality $$
\begin{bmatrix}
D_{41}&*&*&* \\ 0&0&-D_{41}&*
\end{bmatrix}= \begin{bmatrix}
A_0\bar T-\bar T'A_0&*&*&* \\ 0&A_0\bar Q-\bar
Q'B_0&0&*
\end{bmatrix},$$
so $ A_0\bar T-\bar T'A_0=0$, where $A_0$ is of
the form \eqref{5.11}. Since $[\bar T_{21}\, \bar
T_{22}]=[\bar T_{21}'\, \bar T_{22}']=[I_m\,
0_m]$, we have $A_{11}=A_{22}$ and $A_{12}=0$, so
$A_{11}\bar T_{11}-\bar T_{11}'A_{11}=0$ and
$\bar T_{11}$ is reduced by similarity
transformations. Since $\bar T_{11}=W$ is a Weyr
matrix, it is canonical and $K_{v_3}=W$.

Furthermore, $A_{11}\bar T_{12}-\bar
T_{12}'A_{11}=0$, where $A_{11}$ commutes with
$W$, hence $\bar T_{12}=B$ is canonical too. It
proves that $K$ is a canonical matrix.
\end{step}


\subsection{Proof of Theorem \ref{t0.1} for
tame problems}
 \label{s4.4}

In this section, we consider a matrix problem
given by $(\varGamma,\, \cal{M})$ for which there
exists no semi-parametric canonical matrix $M$
having a free box $M_q\neq \emptyset$ with the
property \eqref{4.8}. Our purpose is to prove
that the matrix problem satisfies the condition
(I) of Theorem \ref{t0.1}.

Let $\underline{n}$ be a step-sequence. By Remark
\ref{r2.1}, the number of parametric canonical
${\underline{n}\times \underline{n}}$ matrices is
finite. Let $M$ be a parametric canonical
${\underline{n}\times \underline{n}}$ matrix and
one of its parameters is a finite parameter
$\lambda$; that is, the set of
$\lambda$-components in the domain of parameters
is a finite set $\{a_1,\dots,a_r\}.$ Putting
$\lambda=a_1,\dots,a_r$ gives $r$ semi-parametric
canonical matrices. Repeating this procedure, we
obtain a finite number of semi-parametric
canonical ${\underline{n}\times \underline{n}}$
matrices having only infinite parameters or
having no parameters.

Let $M$ be an indecomposable semi-parametric
canonical ${\underline{n}\times \underline{n}}$
matrix that has no finite parameters but has
infinite parameters, and let $M_v$ be the first
among its boxes with parameters (then $M_v$ is
free). By the property \eqref{4.8}, if a
$v$-strip is linked with a $v$-strip containing a
parameter $\lambda$ from $M_v$, then it does not
contain a free box $M_i>M_v$ such that $M_i\neq
\emptyset$. Since $M$ is indecomposable, it
follows that all its $v$-strips are linked, all
free boxes $M_i>M_v$ are equal to $\emptyset$,
and $M_v=J_m(\lambda)$. Hence, all free
$v$-blocks excepting $M_v$ are scalar matrices
and $M=L(J_m(\lambda))$, where
$L(\lambda)=[a_{ij}+\lambda b_{ij}]$ is a
semi-parametric canonical matrix with a free
$1\times 1$ box $M_v=[\lambda]$ and all free
boxes after it are $1\times 1$ matrices of the
form $\emptyset$.

Let ${\cal D}_m\subset k$ be the domain of
parameters of $M$. By the property \eqref{4.8},
${\cal D}_m$ is a cofinite set (i.e. $k\setminus
{\cal D}_m$ is finite).

If $a\notin {\cal D}_m$, then the matrix $M(a)$
is canonical and there exists a free box
$M_q>M_v$ such that $M_q\neq \emptyset$. This box
$M_q$ is the zero $1\times 1$ matrix. Since $M$
is indecomposable, all its rows and columns are
linked, so $M_q$ is reduced by similarity
transformations. Replacing it by the parametric
box $[\mu]$, we obtain a straight line of
indecomposable canonical matrices that intersects
$\{M(\lambda)\,|\,\lambda\in k\}$ at the point
$M(a)$. Hence, each $M(a)$, $a\notin {\cal D}_m$,
is a point of intersection of $\{M(\lambda)\,|\,
\lambda\in k\}$ with a straight line of
indecomposable canonical matrices.

Let $M(a)$, $a\in {\cal D}_m$, be a point of
intersection too; that is, there exists a line
$\{N(\mu)\,|\,\mu\in k\}$ of indecomposable
canonical matrices such that $M(a)=N(b)$ for a
certain $b\in k$. Then $M(\lambda)$ has a free
box $M_u$ ($u<v$) that is a Weyr matrix, $b$ is
its eigenvalue, and $N(\mu)$ is obtained from
$M(a)$ by replacement of $b$ with $\mu$. Since
$M(\lambda)$ and $N(\mu)$ coincide on
$M_1\cup\dots\cup M_{u-1}$, $M_u=N_u$ for
$\mu=b$. By analogy with the structure of
$M(\lambda)$, all free boxes $N_i>N_u$  are zero,
hence $M_v=0$ if $\lambda=a$. Since
$M_v=J_m(\lambda)$, $M(a)$ with $a\in {\cal D}_m$
can be a point of intersection only if $m=1$ and
$\lambda=0$.

Replacing $m$ by an arbitrary integer $n$ gives a
new semi-parametric canonical matrix
$L(J_n(\lambda))$ with the domain of parameters
${\cal D}_n$. To prove that the condition (I) of
Theorem \ref{t0.1} holds, it suffices to show
that ${\cal D}_m={\cal D}_n$. Moreover, it
suffices to show that ${\cal D}_m={\cal D}_1$.

Let first $a\in{\cal D}_1$. By analogy with
Section \ref{s4.3.3}, under the $\star$-partition
we mean the partition into $(v-1)$-strips. Then
$a\in{\cal D}_m$ if and only if all free
$\star$-blocks after $M_v$ in $M(a)$ are
$\emptyset$. The $\star$-blocks of every $S\in
\Lambda_{-1}^*$ (see Section \ref{s4.3.3})
satisfy the system \eqref{5.1}--\eqref{5.2},
where $c_{ij}^l$ do not depend on $m$ and $a$.
Solving the system \eqref{5.2}, we choose
$S_1,\dots,S_t\in \{ S_{ij}^{\star}\,|\, i<j\}$
such that they are arbitrary and the other $
S_{ij}^{\star}\ (i<j)$ are their linear
combinations.

Let $F_1<F_2<\dots<F_{\delta}$ be the sequence of
all free $M^{\star}_{ij}\not\subset M_1\cup\dots
\cup M_v$ and let $K$ be obtained from $M$ by
replacing $F_1,\dots,F_{\delta}$ with arbitrary
$m\times m$ matrices $G_1,\dots,G_{\delta}$. To
prove that $a\in{\cal D}_m$, we must show that
$F_1=\dots=F_{\delta}=\emptyset$ for $M(a)$; that
is, there exists $S\in \Lambda_0^*$ such that
$G'_1=\dots=G'_{\delta}=0$ in $K':= SKS^{-1}$. It
suffices to consider the case
$G_{1}=\dots=G_{q-1}=0\ne G_{q}$ ($q\in
\{1,\dots,\delta\}$) and to show that there
exists $S\in \Lambda_{-1}^*$ with
$S_{11}^{\star}= S_{22}^{\star}=\dots =I_m$ (then
$S\in \Lambda_{0}^*$) such that
$G'_{1}=\dots=G'_{q-1}= G'_{q}=0$. It means that
the $\star$-blocks $S_1,\dots, S_t$ of $S$
satisfy the system of equations that is obtained
by equating in $K'S=SK$ the blocks on the places
of $G_{1},\dots,G_{q}$:
\begin{align}
S_1^{f_{l1}}+\dots+ S_t^{f_{lt}} &=0,\quad
l=1,\dots,q-1, \label{5.12}\\ S_1^{f_{q1}}+\dots+
S_t^{f_{qt}} &= G_{q}^{\varphi},   \label{5.13}
\end{align}
where ${\varphi}(a,a)\ne 0$ and $S_j^{f_{ij}}$ is
defined by (\ref{4.1}) with $L=R= J_m(a)$. Note
that the polynomials $f_{ij}$ are the same for
all $m\in\mathbb N$ and $a$.

Taking 1 instead of $m$, we obtain the system
 \begin{align*} f_{l1}(a,a)s_1+\dots+
f_{lt}(a,a)s_t &=0,\quad l=1,\dots,q-1,
\label{5.14}\\ f_{q1}(a,a)s_1+\dots+
f_{qt}(a,a)s_t &= g.
 \end{align*}
Since $a\in{\cal D}_1$, this system is solvable
with respect to $s_1,\dots,s_t$ for all $ g\in
k$. It holds for all $q$, so the rows of
$F:=[f_{ij}(a,a)]$ are linearly independent.

Let $S_r=[s_{ij}^{(r)}]_{i,j=1}^m$ and
$G_{q}^{\varphi}=[g_{ij}]_{i,j=1}^m$. Since $L=R=
J_m(a)$, the system of $q$ matrix equations
\eqref{5.12}--\eqref{5.13} is equivalent to the
$m^2$ systems of $q$ linear equations relatively
to the entries of $S_1,\dots,S_t$, each of them
is obtained by equating the $(i,j)$ entries for
the corresponding $i,j\in\{1,\dots,m\}$ and has
the form:
\begin{equation}  \label{5.16}
f_{l1}(a,a)s^{(1)}_{ij}+\dots+f_{lt}(a,a)s^{(t)}_{ij}
  =d^{(l)}_{ij}, \quad l=1,\dots,q,
\end{equation}
where $d^{(l)}_{ij}$ is a linear combination of
$s^{(1)}_{i'j'},\dots, s^{(t)}_{i'j'}$,
$(i',j')\in \{(1,j),\dots,(i-1,j)\}\cup
\{(i,j+1),(i,j+2), \ldots\}$, and (only if $l=q$)
$g_{ij}$. Since the rows of $F=[f_{ij}(a,a)]$ are
linearly independent, the system \eqref{5.16} for
$(i,j)=(m,1)$ is solvable. Let $\bar s_{m1}=
(\bar{s}^{(1)}_{m1}, \dots, \bar{s}^{(t)}_{m1})$
be its solution. Knowing $\bar{s}_{m1}$, we
calculate $d^{(l)}_{m-1,1}$ and $d^{(l)}_{m2}$,
then solve the system \eqref{5.16}  for
$(i,j)=(m-1,1)$ and for $(i,j)=(m,2).$ We next
calculate $d^{(l)}_{ij},\ i-j=m-2$, and solve
\eqref{5.16} for
$(i,j)=(m-2,1),\,(m-1,2),\,(m,3),$ and so on,
until we obtain a solution $\bar{S}_1,\dots,
\bar{S}_t$ of \eqref{5.12}, a contradiction.
Hence $a\in{\cal D}_m$, which clearly implies
$a\in{\cal D}_1$. It proves Theorem \ref{t0.1}.

\begin{remark}
We can give a more precise description of the set
of canonical matrices based on the proof of
Theorem \ref{t0.1}. For simplicity, we restrict
ourselves to the case ${\cal M}=k^{t\times t}$.

Namely, a linear matrix problem given by a pair
$(\varGamma, k^{t\times t})$ satisfies one and
only one of the following two conditions
(respectively, is of tame or wild type):
  \begin{itemize}
  \item[(I)]
For every step-sequence $\underline{n}$, there
exists a finite set of semi-parametric canonical
$\underline{n}\times\underline{n}$ matrices
$M_{\underline{n},i}(\lambda)$, $i=1,\dots,
t_{\underline n},$ whose domains of parameters
${\cal D}_{\underline{n},i}$ are cofinite subsets
in $k$ and
\begin{itemize}
  \item[(a)]
for every $m\ge 1$,
$M_{\underline{n},i}(J_m(\lambda))$ is a
semi-parametric canonical matrix with the same
domain of parameters ${\cal D}_{\underline{n},i}$
and the following partition into boxes:
$J_m(\lambda)$ is a box, all boxes preceding it
are the scalar matrices $B_1\otimes
I_m,\dots,B_l\otimes I_m$ (where $B_1,\dots,B_l$
are the boxes of $M_{\underline{n},i}(\lambda)$
preceding $[\lambda]$), and all boxes after it
are the $1\times 1$ matrices $\emptyset$;

\item[(b)]
for every $\underline{n}'$, the set of matrices
of the form $M_{\underline{n},i}(J_m(a))$,
$m\underline{n}=\underline{n}'$, $a\in {\cal
D}_{\underline{n},i}$, is a cofinite subset in
the set of indecomposable canonical
$\underline{n}'\times\underline{n}'$ matrices.
\end{itemize}

\item[(II)]
There exists a semi-parametric canonical
$\underline{n}\times \underline{n}$ matrix
$P(\alpha,\beta)$ (in which two entries are the
parameters $\alpha$ and $\beta$ and the other
entries are elements of $k$) such that

\begin{itemize}
\item[(a)]
two pairs of $m\times m$ matrices $(A,B)$ and
$(C,D)$ are similar if and only if $P(A,B) \simeq
P(C,D)$; moreover,
\item[(b)]
a pair of $m\times m$ matrices $(A,\,B)$ is
canonical under similarity (see Definition
\ref{d1.2a}) if and only if the
$m\underline{n}\times m\underline{n}$ matrix
$P(A,\,B)$ is canonical.
 \end{itemize}
\end{itemize}
\end{remark}

\section*{Acknowledgements}
I wish to thank P. Gabriel, L. A. Nazarova, A. V.
Roiter, and D. Vossieck; a joint work on the
article \cite{gab_vos} was a great inspiration
for me and introduced me to the theory of tame
and wild matrix problems. The idea to prove the
Tame--Wild Theorem using Belitski\u{\i}'s
algorithm came from this work and was first
discussed in my talks at the Zurich University in
1993 by invitation of P. Gabriel.

I wish to thank C. M. Ringel for the invitations
to speak on the contents of this paper at the
University of Bielefeld in 1998--1999 and
stimulating discussions, and for the publication
of the preprint \cite{ser_p}.

I am grateful to G. R. Belitski\u\i, T.
Br\"ustle, Yu. A. Drozd, S. Friedland, D. I.
Merino, and the referee for helpful suggestions
and comments.

The work was partially supported by the Long-Term
Research Grant No. U6E000 of the International
Science Foundation and by Grant No. UM1-314 of
the U.S. Civilian Research and Development
Foundation for the Independent States of the
Former Soviet Union.

\end{document}